\documentclass[a4paper,12pt]{article}

\usepackage[english]{babel}
\usepackage{hyperref}
\usepackage{amssymb,amsmath,amsthm}

\usepackage{enumerate,enumitem}
\usepackage{graphicx,color}
\usepackage{epsfig}
\usepackage{tikz}
\usetikzlibrary{calc,decorations.pathmorphing,decorations.text,decorations.markings,matrix}
\usepackage[font=footnotesize]{caption}
\usepackage[font=footnotesize]{subcaption}
\usepackage{refcount}
\usepackage[hmargin=2.5cm,vmargin=2.5cm]{geometry}
\usepackage{mathtools, braket,bm}
\usepackage[normalem]{ulem}
\usepackage{authblk}
\usepackage{xcolor} 
\usepackage[square,sort,comma,numbers]{natbib}
\usepackage[thicklines]{cancel}
\usepackage{centernot}
\usepackage{indentfirst}
\usepackage[nameinlink]{cleveref}
\usepackage{booktabs}

\usepackage{array}
\allowdisplaybreaks[2]

\renewcommand{\ge}{\geqslant}
\renewcommand{\geq}{\geqslant}
\renewcommand{\le}{\leqslant}
\renewcommand{\leq}{\leqslant}

\theoremstyle{plain}
\newtheorem{thm}{Thm}[section]

\newtheorem{theorem}[thm]{Theorem}
\newtheorem{lemma}[thm]{Lemma}
\newtheorem{corollary}[thm]{Corollary}
\newtheorem{proposition}[thm]{Proposition}
\newtheorem{conjecture}[thm]{Conjecture}

\newtheorem{observation}[thm]{Observation}
\newtheorem{definition}[thm]{Definition}

\newtheorem*{Mtheorem}{Main Theorem}
\newtheorem*{Stheorem}{Structural Theorem}

\def\aftermath{\par\vspace{-\belowdisplayskip}\vspace{-\parskip}\vspace{-\baselineskip}}

\newenvironment{proof*}{\noindent \emph{Proof.}}{\hfill$\Diamond$}

\def\Z{\mathbb{Z}}

\def\P{\mathcal{P}}
\def\Q{\mathcal{Q}}

\def\N{\mathcal{N}}
\def\S{\mathcal{S}}

\def\SZ{\mathrm{S}\Z}
\def\SZZ#1{\SZ_{#1}}  

\crefname{theorem}{Theorem}{Theorems}
\Crefname{theorem}{Theorem}{Theorems}

\begin{document}
	\title{Orientations of $10$-Edge-Connected\\ Planar Multigraphs and Applications}
	\date{}
	
	\author{{\normalsize Daniel W. Cranston\thanks{William \& Mary, Department of Mathematics; Williamsburg, VA, USA; \texttt{dcransto@gmail.com}}, 
	Jiaao Li\thanks{School of Mathematical Sciences and LPMC, Nankai University, Tianjin 300071, China; \texttt{lijiaao@nankai.edu.cn}; Research supported by National Natural Science Foundation of China (No. 12571371) and Natural Science Foundation of Tianjin (No. 24JCJQJC00130)}, 
			Bo Su\thanks{School of Mathematical Sciences and LPMC, Nankai University, Tianjin 300071, China; \texttt{suboll@mail.nankai.edu.cn}}, 
			Zhouningxin Wang\thanks{School of Mathematical Sciences and LPMC, Nankai University, Tianjin 300071, China; \texttt{wangzhou@nankai.edu.cn}; Research supported by National Natural Science Foundation of China (No. 12301444).}, and Chunyan Wei\thanks{Department of Mathematical and Physical Sciences, Henan University of Engineering, Zhengzhou 451191, China; \texttt{yan1307015@163.com}}}}
	
	\maketitle
	
\begin{abstract}
A graph is called strongly $\Z_{2k+1}$-connected if for each boundary function $\beta: V(G)\mapsto \Z_{2k+1}$ with $\sum_{v\in V(G)}\beta(v)\equiv 0\pmod{2k+1}$, there exists an orientation $D$ of $G$ such that $d_D^+(v) - d_D^-(v) \equiv \beta(v) \pmod{2k+1}$ for each $v \in V(G)$. 

We show that every planar multigraph with $5$ edge-disjoint spanning trees is strongly $\Z_{5}$-connected. This verifies a special case of the Additive Base Conjecture when restricted to planar graphs. Hence, every $10$-edge-connected directed planar graph admits an antisymmetric $\Z_5$-flow. By duality, every orientation of a planar graph of girth at least $10$ admits a homomorphism to a $5$-vertex tournament.  

Our result also gives a new proof of the known result that every planar graph of girth at least $10$ has a homomorphism to the $5$-cycle.
\end{abstract}

\textbf{Keywords:} {circular flow, antisymmetric flow, group connectivity, vertex partition}

\section{Introduction}
Graphs in this paper are allowed to have parallel edges but no loops. We generally adopt terminology and notation from the books of Bondy and Murty~\cite{BM08} and Zhang~\cite{Z97}.
	
\subsection{Circular Flows and Antisymmetric Flows}
	
In 1984, Jaeger~\cite{J84} introduced the concept of circular flows in graphs. 
But the definition most widely used today, which is equivalent, was formulated in 1998 by Goddyn et al.~\cite{GTZ98}; they presented circular flows as a dual notion of circular colorings. This concept naturally extends the classical notion of integer flows due to Tutte~\cite{T54}.

Let $G$ be a graph and let $p$ and $q$ be integers with $p \geqslant 2q > 0$. 
A \emph{circular $\frac{p}{q}$-flow} of $G$ is a flow $(D,f)$ such that
every edge $e \in E(G)$ satisfies
\[
q \leqslant |f(e)| \leqslant p - q.
\]

In 1981, Jaeger~\cite{J84,J88} conjectured (in an equivalent form) that every $4k$-edge-connected graph admits a circular $(2+\frac{1}{k})$-flow. In the planar setting, this conjecture reduces to the following Planar Circular Flow Conjecture.

\begin{conjecture}[Planar Circular Flow Conjecture~\cite{J84,J88}]
\label{conj:planar circular flow}
Every $4k$-edge-connected planar graph admits a circular $(2+\frac{1}{k})$-flow.
\end{conjecture}

The $k=1$ case of \Cref{conj:planar circular flow} is the dual of the famous Gr\"{o}tzsch's Theorem: every triangle-free planar graph is $3$-colorable. The present paper focuses on the first open case of \Cref{conj:planar circular flow}, namely $k=2$. 
Lov\'{a}sz, Thomassen, Wu, and Zhang~\cite{LTWZ13} proved that every $6k$-edge-connected graph admits a circular $(2+\frac{1}{k})$-flow; in particular this guarantees a circular $\frac{5}{2}$-flow for each $12$-edge-connected graph. 
By also assuming planarity, Cranston and Li~\cite{CL20} proved the same conclusion with a weaker requirement on connectivity; see \Cref{thm:10-planar-circular-flow}. 
The corresponding dual result for circular colorings was previously proved by Dvo\v{r}\'{a}k and Postle~\cite{DP17}: every planar graph of girth at least $10$ admits a homomorphism to a cycle of length $5$.

\begin{theorem}[\cite{CL20,DP17}]
\label{thm:10-planar-circular-flow}
Every $10$-edge-connected planar graph admits a circular $\frac{5}{2}$-flow.
\end{theorem}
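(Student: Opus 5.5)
The plan is to derive the statement from the paper's main result: every planar multigraph with $5$ edge-disjoint spanning trees is strongly $\Z_5$-connected. This is the route the abstract advertises, and it gives a proof independent of~\cite{CL20,DP17}. Let $G$ be a $10$-edge-connected planar graph.

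\textbf{Step 1: edge-disjoint spanning trees.} By the Nash-Williams--Tutte theorem, every $2k$-edge-connected graph contains $k$ edge-disjoint spanning trees. For any partition $\P$ of $V(G)$ into $t$ parts, each part sends at least $2k$ edges across, so at least $kt \ge k(t-1)$ edges cross $\P$. With $k=5$, this shows that $G$ contains $5$ edge-disjoint spanning trees.

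\textbf{Step 2: applying strong connectivity.} By the main result, $G$ is then strongly $\Z_5$-connected. Choose $\beta\equiv 0$. This gives an orientation $D$ of $G$ in which every vertex satisfies $d_D^+(v)-d_D^-(v)\equiv 0 \pmod 5$.

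\textbf{Step 3: a mod-$5$ orientation gives a circular $\frac52$-flow.} This is the standard equivalence due to Jaeger. Assign every edge of $D$ the value $2$. The net outflow at $v$ is then $2(d^+-d^-)\equiv 0 \pmod 5$, so $(D,2)$ is a nowhere-zero $\Z_5$-flow whose values lie in $\{2,3\}$ up to sign.

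To make this an integer flow with $2\le |f(e)|\le 3$, I would lift it via Hoffman's circulation theorem. Bounds of $[2,3]$ on each arc, with reversals allowed, form a feasible circulation region exactly when, for every cut, the maximum possible flow out is at least the minimum required flow in. Equivalently, one can invoke the known result (Goddyn--Tarsi--Zhang; Jaeger) that $G$ has a circular $\frac{p}{q}$-flow if and only if it has an orientation $D$ with $d_D^+(v)\equiv d_D^-(v)$ modulo $p$ after weighting. For $\frac{p}{q}=\frac{5}{2}$, this specializes to a mod-$5$ orientation, and I would simply cite that equivalence.

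For completeness, the direct check goes as follows. Given the mod-$5$ orientation, each cut $[X,X^c]$ has out-edges $a$ and in-edges $b$ with $a-b\equiv 0\pmod 5$. Hoffman's condition for a flow with values in $[2,3]$ on the arcs of $D$ is $2a\le 3b$ and $2b\le 3a$ for every cut. This holds when $a=b$. Otherwise $|a-b|\ge 5$, and one would need $a+b$ large relative to $|a-b|$, which fails in general. So the values must be allowed to use both signs. The correct formulation assigns each arc a value in $\{2,3\}$ with sign chosen freely, and the Goddyn--Tarsi--Zhang equivalence handles exactly this.

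The main obstacle is therefore not Step 3, which is classical, but the main theorem itself. Here the proof is just this citation chain: $10$-edge-connected implies $5$ edge-disjoint spanning trees, which implies strongly $\Z_5$-connected, which implies a mod-$5$ orientation, which implies a circular $\frac52$-flow. The one point I would double-check is the exact form of the orientation–circular-flow equivalence I cite.
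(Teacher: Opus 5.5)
Your proposal is correct and follows exactly the paper's derivation: $10$-edge-connectivity gives five edge-disjoint spanning trees, \Cref{thm:5treesSZ5} gives $G\in\SZ_5$, the boundary $\beta\equiv 0$ gives a modulo $5$-orientation, and Jaeger's equivalence (which the paper also just cites) turns this into a circular $\frac52$-flow. The Hoffman detour in Step~3 is unnecessary, and your general paraphrase of the Goddyn--Tarsi--Zhang result is not accurate, although the $\frac52$ case you actually use is Jaeger's theorem. For a self-contained finish, apply Tutte's integer-lifting lemma to the constant-$2$ $\Z_5$-flow on $D$; it gives an integer flow with all values in $\{2,-3\}$, which is a circular $\frac52$-flow.
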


In this paper, we strengthen \Cref{thm:10-planar-circular-flow} using the notion of group connectivity. 
We state this result below in \Cref{thm:10-planar-SZ5}, after providing the needed definitions.
As an immediate corollary of \Cref{thm:10-planar-SZ5}, we recover \Cref{thm:10-planar-circular-flow}.
But more importantly, our results also contribute to the study of antisymmetric flows in directed graphs.

Let $\Gamma$ be an abelian group and let $\vec{G}=(V,\vec{E})$ be a directed graph. 
A $\Gamma$-flow $f\colon \vec{E}\to \Gamma$ is an \emph{antisymmetric flow} of $\vec{G}$ (abbreviated as a $\Gamma$-ASF) if no two arcs are assigned inverse elements of $\Gamma$. 
In particular, this condition implies that no arc is assigned the value $0$. Throughout this paper, a directed graph $\vec{G}$ is called \emph{$k$-edge-connected} if its underlying undirected graph, $G$, is $k$-edge-connected.

Many results on antisymmetric flows have been shown under various edge-connectivity hypotheses. 
Dvo\v{r}\'{a}k, Kaiser, Kr\'{a}{l}’, and Sereni~\cite{DKKS10} proved that every $3$-edge-connected directed graph admits a $\Z_2^3 \times \Z_3^9$-ASF. 
DeVos, Ne\v{s}et\v{r}il, and Raspaud~\cite{DNR04} proved that every $4$-edge-connected directed graph admits a $\Z_2^2 \times \Z_3^4$-ASF; every $5$-edge-connected directed graph admits a $\Z_3^5$-ASF; and every $6$-edge-connected directed graph admits a $\Z_2 \times \Z_3^2$-ASF. 
Esperet, de Joannis de Verclos, Le, and Thomass\'{e}~\cite{EJLT18} proved that every $12$-edge-connected directed graph admits a $\Z_5$-ASF. Cranston and Li~\cite{CL20} further showed that every $11$-edge-connected directed planar graph admits a $\Z_5$-ASF. 
We strengthen this result as follows.

\begin{theorem}
\label{thm:10-directed-planar-Z5}
Every $10$-edge-connected directed planar graph admits a $\Z_5$-ASF.
\end{theorem}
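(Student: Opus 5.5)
The plan is to deduce \Cref{thm:10-directed-planar-Z5} from the main structural result announced in the abstract: every planar multigraph with $5$ edge-disjoint spanning trees is strongly $\Z_5$-connected (this is \Cref{thm:10-planar-SZ5}). By the Nash-Williams--Tutte theorem, a $10$-edge-connected graph contains $5$ edge-disjoint spanning trees. So if $G$ is the underlying graph of a $10$-edge-connected directed planar graph $\vec G$, then $G$ is strongly $\Z_5$-connected. The remaining work is a translation from orientations to antisymmetric flows.

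The translation goes as follows. A $\Z_5$-ASF of $\vec G$ assigns to each arc a value in $\Z_5\setminus\{0\}$ such that no two arcs receive inverse elements. We can satisfy this by using only the values $\{1,2\}$ on arcs, since $-1=4$ and $-2=3$, so no two values in $\{1,2\}$ are mutually inverse. We may equivalently work with the values $\{1,-2\}=\{1,3\}$ after rescaling.

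To produce such a flow, write $f(e)=1+\epsilon_e$ with $\epsilon_e\in\{0,1\}$ chosen through an orientation. The cleanest route multiplies by $2^{-1}=3$. Assigning each arc $e$ the value $2$ or $1$ is the same as assigning $3\cdot f(e)\in\{1,3\}$, that is, $2+\sigma_e$ with $\sigma_e=\pm1$. Interpret $\sigma_e=+1$ as keeping the direction of $e$ and $\sigma_e=-1$ as reversing it; call the resulting orientation $D$. Then the net outflow at $v$ of the flow $e\mapsto 2+\sigma_e$ on $\vec G$ equals
\[ 2\bigl(d^+_{\vec G}(v)-d^-_{\vec G}(v)\bigr)+\bigl(d^+_D(v)-d^-_D(v)\bigr). \]
We need this to be $0$ in $\Z_5$. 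Define the boundary $\beta(v)\equiv -2\bigl(d^+_{\vec G}(v)-d^-_{\vec G}(v)\bigr)\pmod 5$. Its sum over $V$ is $0$, because each arc contributes $+1$ and $-1$. Strong $\Z_5$-connectivity then gives an orientation $D$ with $d^+_D(v)-d^-_D(v)\equiv\beta(v)$ for every $v$. Reading off $\sigma_e$ gives a $\Z_5$-flow with values in $\{1,3\}$, and multiplying by $2$ gives values in $\{2,1\}$. This is an ASF.

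The only substantive obstacle is therefore \Cref{thm:10-planar-SZ5} itself. Once that is available, this theorem is a short corollary via Nash-Williams--Tutte and the boundary computation above. The one point to double-check is the sign conventions in that computation, and any consistent choice works.
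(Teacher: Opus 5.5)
Your argument is correct and follows the paper's route. The paper derives this theorem from \Cref{thm:10-planar-SZ5} together with the result of Esperet, de Joannis de Verclos, Le, and Thomass\'{e} that every orientation of a graph in $\SZ_5$ admits a $\Z_5$-ASF; your choice of boundary $\beta(v)\equiv-2\bigl(d^+_{\vec G}(v)-d^-_{\vec G}(v)\bigr)$ with flow values $2+\sigma_e\in\{1,3\}$ simply reproves that cited step directly, and the signs check out. Two small remarks: the five-spanning-trees statement is \Cref{thm:5treesSZ5}, while \Cref{thm:10-planar-SZ5} is the $10$-edge-connected form that already includes your Nash-Williams--Tutte step; and $\{1,3\}$ is itself antisymmetric in $\Z_5$, so the final rescaling by $2$ is unnecessary.
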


The dual formulation of \Cref{thm:10-directed-planar-Z5} asserts that every directed planar graph of girth at least $10$ admits a homomorphism to a directed simple graph on at most $5$ vertices. In other words, every planar graph of girth at least $10$ has oriented chromatic number at most $5$. Similar to \Cref{thm:10-planar-circular-flow}, \Cref{thm:10-directed-planar-Z5} follows directly from \Cref{thm:10-planar-SZ5}, below.

\subsection{Group Connectivity and Additive Bases}

Let $G$ be a graph and let $D$ be an orientation of $G$. 
For each $v \in V(G)$, let $E_D^+(v)$ (resp., $E_D^-(v)$) denote the set of arcs in $D$ with tail (resp., head) at $v$, and let $d_D^+(v) := |E_D^+(v)|$ and $d_D^-(v) := |E_D^-(v)|$. When $D$ is clear from context, we omit the subscript.

Given a graph $G$ and a positive integer $k$, an orientation $D$ of $G$ is a \emph{modulo $(2k+1)$-orientation} if
\(
d_D^+(v) - d_D^-(v) \equiv 0 \pmod{2k+1}
\)
for every $v \in V(G)$. 
In 1988, Jaeger~\cite{J88} observed that a graph $G$ admits a circular $\left(2+\frac{1}{k}\right)$-flow if and only if $G$ admits a modulo $(2k+1)$-orientation. 
In 2014, the concept of strong $\mathbb{Z}_{2k+1}$-connectivity was studied in~\cite{LLLMMSZ14}, for which a modulo $(2k+1)$-orientation is a special case.
For each odd integer $m\ge3$, a mapping $\beta:V(G)\to \Z_m$ is a {\it $\Z_m$-boundary} of $G$ if $\sum_{v\in V(G)}\beta(v)\equiv0~\pmod{m}$. Given a $\Z_m$-boundary $\beta$ of $G$, an orientation $D$ of $G$ is a \emph{$\beta$-orientation} if $d_D^+(v)-d_D^-(v)\equiv\beta(v)~\pmod{m}$ for every $v\in V(G)$.

\begin{definition}[{\cite{LLLMMSZ14}}]\label{def:SZk}
Let $m$ be an odd integer with $m\geq 3$. A graph $G$ is \emph{strongly $\Z_{m}$-connected} if for every $\Z_{m}$-boundary $\beta$ of $G$, there exists a $\beta$-orientation.
\end{definition}

When a graph $G$ is strongly $\Z_{m}$-connected, we simply write $G \in \SZ_{m}$. Cranston and Li~\cite{CL20} showed that $G\in\SZ_5$ whenever $G$ is an $11$-edge-connected planar graph.
In this paper, we strengthen their result as follows, which yields several consequences for circular flows and antisymmetric flows; see \Cref{thm:10-planar-circular-flow} and \Cref{thm:10-directed-planar-Z5}.

\begin{theorem}\label{thm:10-planar-SZ5}
Every $10$-edge-connected planar graph is strongly $\Z_5$-connected.
\end{theorem}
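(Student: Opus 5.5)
The plan is to derive \Cref{thm:10-planar-SZ5} from the stronger statement in the abstract: every planar multigraph with $5$ edge-disjoint spanning trees is strongly $\Z_5$-connected. By the Nash-Williams--Tutte theorem, a $10$-edge-connected graph contains $5$ edge-disjoint spanning trees. So it suffices to prove the spanning-tree version.

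The spanning-tree version is natural to prove by a minimal counterexample. Take a planar $G$ with $5$ edge-disjoint spanning trees, $G\notin \SZ_5$, minimizing $|V(G)|+|E(G)|$. We use three standard tools.
\begin{enumerate}
\item \emph{Contraction.} If $H\subseteq G$ is strongly $\Z_5$-connected and $G/H\in\SZ_5$, then $G\in \SZ_5$. We extend an orientation of $G/H$ by choosing a suitable orientation inside $H$.
\item \emph{Deletion.} Deleting edges preserves the $\SZ_5$ property downward. In particular, any vertex of degree at least $8$ incident to parallel edges yields small $\SZ_5$ pieces: a $2$-vertex graph with $4$ parallel edges lies in $\SZ_5$. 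Hence $G$ has edge multiplicity at most $3$.
\item \emph{Lifting (splitting off) a pair of edges at a vertex.} Given $uv,vw$, replace them by $uw$. The result is still planar. Its spanning-tree packing can be controlled via the partition form of Nash-Williams' condition: $G$ has $5$ edge-disjoint spanning trees iff $e(\mathcal P)\ge 5(|\mathcal P|-1)$ for every partition $\mathcal P$ of $V(G)$.
\end{enumerate}

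The key structural steps, in order, are as follows.
\begin{enumerate}
\item Show that $G$ is contraction-critical: no nontrivial proper subgraph is in $\SZ_5$. Using the partition characterization, show that every nontrivial partition $\mathcal P$ is "loose", i.e.\ $e(\mathcal P)\ge 5(|\mathcal P|-1)+c$ for some slack $c$. Otherwise contracting a tight part and applying minimality gives a contradiction.
\item Eliminate low-degree vertices. Degree $10$ is the minimum possible. Handle a vertex $v$ of degree $10$ or $11$ by lifting pairs of its incident edges and deleting $v$. This reduces $|V|$, and the needed boundary at $v$ can be realized by choosing which pairs to lift (since $\deg v \ge 10 > 2\cdot 5$). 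The slack from step 1 keeps $5$ spanning trees in the reduced graph.
\item Finish with a planar discharging argument. Euler's formula bounds $|E(G)|\le 3|V(G)|-6$, which is compatible with average degree near $6$. The real contradiction must therefore come from combining multiplicity $\le 3$, face structure, and the partition slack: having $5$ spanning trees forces $|E|\ge 5|V|-5$, so the underlying simple planar graph must carry many parallel classes. Discharge from vertices to faces and reducible configurations, e.g.\ a vertex of degree $10$ adjacent via triple edges to a neighbor.
\end{enumerate}

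The main obstacle is step 2 together with the configurations in step 3. We must show that lifting preserves the $5$-tree condition, without the full $10$-edge-connectivity that Mader-type splitting would give. This requires a careful choice of which edge pairs to lift, guided by the rotation at $v$ in the plane so planarity is maintained, and a counting argument on partitions that are tight after lifting. I would first attempt it by strengthening the induction hypothesis. Allow a precolored (prescribed-orientation) vertex of bounded degree on the outer face, in the spirit of Thomassen's and Lovász et al.'s weighted-degree inductions. This gives enough flexibility to absorb the deficit created at tight partitions.
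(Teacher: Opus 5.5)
Your top-level reduction is correct and is exactly the paper's route. A $10$-edge-connected graph has five edge-disjoint spanning trees by Nash-Williams--Tutte, so the statement follows from \Cref{thm:5treesSZ5}. Your observations that a minimal counterexample has $\mu(G)\le 3$ (contract a $4K_2$) and no tight nontrivial partition are also fine. The rest, however, is a plan rather than a proof, and the step it hinges on fails.

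\textbf{Why step 2 fails.} If $e(G)>5(v(G)-1)$, delete an edge outside a fixed packing of five spanning trees. The smaller graph is in $\SZ_5$ by minimality, and adding the edge back preserves $\SZ_5$. Hence $e(G)=5v(G)-5$ exactly, so the average degree is $10-10/v(G)<10$ and $G$ must have vertices of degree at most $9$. Your claim that ``degree $10$ is the minimum possible'' is therefore false in the spanning-tree class, where you only get $\delta\ge 5$, or $6$ after step~1.

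Worse, the trivial partition is tight, and your slack $c$ does not reach it: $c$ is only $1$, and only for nontrivial partitions. Suppose you delete a vertex $v$ of degree $d$ after splitting off $(d-k)/2$ pairs and orienting the $k$ unpaired edges to realize $\beta(v)$. The resulting graph has $5v(G)-5-(d+k)/2$ edges. This is fewer than the $5(v(G)-2)$ needed for five spanning trees whenever $d+k>10$. For example, $d=7$ and $\beta(v)=0$ force $k=5$: $k$ must be odd, and the net value of $k$ oriented edges must be $\pm 5$.

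So the class ``planar with five edge-disjoint spanning trees'' is not closed under the reductions you propose, and step~2 cannot be carried out as stated. Step~3 has no concrete content, and note that $|E|\le 3|V|-6$ fails for multigraphs.

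\textbf{The missing idea.} What you need is an enlarged, contraction-closed induction class; your ``precolored vertex'' only gestures at this. The paper inducts on $\mathcal{S}_5$-contractible graphs, defined by two conditions:
\begin{enumerate}
\item $w(G)=\min_{\mathcal P}\bigl(\sum_i d(V_i)-10t+16\bigr)\ge 0$, so every partition may fall three edges short of the Nash-Williams--Tutte count;
\item no contraction $G/\mathcal P$ lies in $\mathcal N_5=\{2K_2,3K_2,T_{1,3,3},T_{2,2,3},W_1,W_2\}$. These are the genuinely non-$\SZ_5$ graphs that the extra room would otherwise admit.
\end{enumerate}
Graphs with five edge-disjoint spanning trees have $w\ge 6$, so they lie in this class.

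The proof then proceeds in four stages.
\begin{enumerate}
\item The base cases $v(G)\le 4$ are handled separately, including a computer check on four vertices.
\item For a minimal counterexample to the Structural Theorem, the co-weight gap lemmas give $e(G)=5v(G)-8$, $\delta(G)\ge 7$, and essential $10$-edge-connectivity.
\item Configurations such as $T_{1,1,3}$, or $T_{2,2,2}$ and certain $Q_{a,b,c,d}$ with short attached paths, are excluded as follows. Lift a few $2$- or $3$-paths so that a subgraph of small negative weight becomes $\mathcal S_5$-contractible, contract it, and verify that the result is still $\mathcal S_5$-contractible.
\item Face discharging with initial charge $d(f)-\tfrac52$ (digons included) has total charge $-1$, yet every face ends nonnegative.
\end{enumerate}
Without specifying some such enlarged class and proving it closed under your splitting and contraction steps, your proposal does not prove the theorem.
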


In fact, we prove the following stronger result; see~\Cref{sec:proof-main-results} for a detailed proof.

\begin{theorem}\label{thm:5treesSZ5}
Every planar graph containing five edge-disjoint spanning trees is strongly $\Z_5$-connected.
\end{theorem}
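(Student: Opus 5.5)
We prove the statement by contradiction, taking a counterexample $G$ that is minimal (first in $|V(G)|$, then in $|E(G)|$) among planar graphs with five edge-disjoint spanning trees that are not in $\SZ_5$. We need two standard tools from group connectivity. The first is that $\SZ_5$ is closed under contraction and extension: if $H\subseteq G$ with $H\in \SZ_5$ and $G/H\in\SZ_5$, then $G\in\SZ_5$. The second is a lifting/splitting lemma, which lets us replace two edges $uv,vw$ at a vertex $v$ by a single edge $uw$ while controlling the boundary at $v$.

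We also use the fact that planarity and the five-tree property survive contraction: if $G$ has five edge-disjoint spanning trees, so does $G/H$. Hence minimality shows that $G$ contains no nontrivial subgraph $H\in\SZ_5$. The basic small members are easy to list. A pair of vertices joined by $4$ parallel edges lies in $\SZ_5$, because the possible values of $d^+-d^-$ are $\{-4,-2,0,2,4\}$, which covers every residue mod $5$. So $G$ has edge multiplicity at most $3$. A few further small configurations (for instance certain triangles with high multiplicities) should likewise be shown to be in $\SZ_5$ and hence excluded.

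The main work is a discharging or counting argument on the planar multigraph. Having five edge-disjoint spanning trees forces $|E(G)|\ge 5(|V|-1)$. Nash-Williams--Tutte gives more: for every partition $\mathcal P$ of $V$, at least $5(|\mathcal P|-1)$ edges cross it. On the other side, Euler's formula, combined with bounded multiplicity and the absence of small $\SZ_5$ subgraphs, bounds the edge density from above. For a refined argument we would prove a stronger, weighted statement that is amenable to induction. Following Cranston--Li and the Lov\'asz--Thomassen--Wu--Zhang method, this would say: $G\in\SZ_5$ unless there is a partition $\mathcal P$ with $e(\mathcal P)<5|\mathcal P|-5$ plus some slack, possibly with a designated precolored vertex $z_0$ of small degree whose orientation is fixed. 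The key steps are:
\begin{enumerate}
\item Show that every nontrivial partition of a minimal counterexample has surplus $e(\mathcal P)-5(|\mathcal P|-1)$ bounded below by a positive constant. Otherwise we contract a tight part and use induction.
\item Find a low-degree vertex, of degree $5$ to $9$ by planarity and density. Apply lifting at that vertex, or delete a suitable pair of edges, and check that the five-tree property, or the partition surplus, is preserved.
\item Extend the orientation back to the vertex using the freedom in its degree mod $5$.
\end{enumerate}

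I expect the main obstacle to be step 2: finding a reducible low-degree configuration whose reduction preserves the partition inequality. With exactly $5(|V|-1)$ edges there is no slack, so degree-$5$ vertices must be handled by splitting off pairs, and the planarity-driven discharging has to guarantee a vertex or face configuration where this works. I would first try the charge assignment $\mathrm{ch}(v)=d(v)-10$ and $\mathrm{ch}(f)=2|f|-10$, whose total is $4|E|-10|V|-10|F|=-20$ by Euler's formula. I would then redistribute charge from high-degree vertices to faces, and use the multiplicity and small-$\SZ_5$ exclusions to obtain the contradiction.
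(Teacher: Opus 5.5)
\textbf{There is a genuine gap, at exactly the point you flag.} The problem is not technical: ``five edge-disjoint spanning trees'' cannot serve as the induction hypothesis, because it does not survive lifting.

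In your minimal counterexample you may take $e(G)=5(|V(G)|-1)$ with every edge in exactly one tree. Contracting tight parts then only guarantees $5(|\P|-1)+1$ crossing edges for every nontrivial partition $\P$. But lifting a single $2$-path $uvw$ with $u,w$ in one part and $v$ in another deletes two crossing edges, so after any lift minimality no longer applies.

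You say a ``stronger, weighted statement'' is needed but never state one, and that statement is the heart of the paper. Read literally, your tentative ``$5|\P|-5$ plus some slack'' raises the threshold, which makes the hypothesis harder to preserve. The paper goes the other way. It weakens the hypothesis to $w(G)\ge 0$, i.e.\ at least $5|\P|-8$ crossing edges for every partition $\P$. Five trees give $w(G)\ge 6$ by Nash-Williams--Tutte, and the weaker condition does survive lift-then-contract.

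This weakened statement is false as it stands. The graphs $2K_2$, $3K_2$, $T_{1,3,3}$, $T_{2,2,3}$ and two $12$-edge graphs $W_1,W_2$ on four vertices all have nonnegative weight and are not in $\SZ_5$. So the induction must run over $\S_5$-contractible graphs (no contraction lies in $\N_5$). Its base case then requires checking that every other $4$-vertex candidate is in $\SZ_5$, which the paper does by computer.

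In return, minimality yields the gap lemma: every proper subgraph $H\ne K_1$ has $w(H)\le 2$, and $w(H)\le -2$ unless $H\in\N_5$. Hence every nontrivial part carries co-weight at least $4$, and at least $8$ on three or more vertices. This is exactly what pays for the lifted edges in each reduction. Your plan contains none of these ingredients.

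\textbf{Your step 2 also stalls where it matters.} With five trees, vertices of degree $5$ or $6$ are reducible: delete $v$, or lift the two edges at $v$ of the tree in which $v$ has degree $2$. Then $G'-v$ is planar with five trees and $d_{G'}(v)\ge 4$.

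However, keeping five trees in $G'-v$ needs $d(v)-5$ lifts, which leaves only $10-d(v)\le 3$ edges at $v$ once $d(v)\ge 7$, and $3K_2\notin\SZ_5$. So your counterexample has $\delta(G)\ge 7$, and single-vertex reductions are exhausted. The paper's counterexample is likewise $7$-edge-connected and essentially $10$-edge-connected.

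What is needed instead are multi-vertex reducible configurations, each certified by a lift-and-contract weight computation:
\begin{itemize}
\item no $T_{1,1,3}$;
\item no $T_{2,2,2}$ with two internally disjoint $2$-paths;
\item the $Q_{a,b,c,d}$ configurations.
\end{itemize}
These are combined with a face discharging using $ch(f)=d(f)-\frac52$, whose total is $-1$ precisely because $e(G)=5v(G)-8$.

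Your proposed charges are also miscomputed. By Euler's formula,
\[
\sum_v\bigl(d(v)-10\bigr)+\sum_f\bigl(2|f|-10\bigr)=6|E|-10|V|-10|F|=-4|E|-20,
\]
not $-20$. Every face of length at most $4$ starts negative, while the vertex charges total $2|E|-10|V|<0$, so that scheme has nothing with which to repay the faces.
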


The Additive Base Conjecture of Jaeger, Linial, Payan, and Tarsi \cite{JLPT92} states, for each prime $p$, that the multiset union of the vectors of any family of $p$ linear bases of $\Z_p^n$ forms an additive basis of $\Z_p^n$ (i.e.~each element of $\Z_p^n$ can be expressed as a linear combination of these vectors with each coefficient $0$ or $1$). 
In graphs, each linear base is a spanning tree. So the graph version of the Additive Base Conjecture states that every graph with $p$ edge-disjoint spanning trees is strongly $\Z_p$-connected. 

A weak version of the $p=3$ case of the Additive Base Conjecture was recently proved by Y.~Yu~in an arXiv note (\href{https://doi.org/10.48550/arXiv.2510.01300}{arXiv 2510.01300}). He showed that the multiset union of the vectors of any family of four linear bases (instead of $3$, as conjectured) of $\Z_3^n$ contains an additive basis of $\Z_3^n$. 
In particular, this implies that every graph with four edge-disjoint spanning trees is strongly $\Z_3$-connected. 
For planar graphs, it is trivial to observe that having three edge-disjoint spanning trees implies being strongly $\Z_3$-connected; this is because every such planar graph must contain parallel edges, which form a simple reducible configuration for being strongly $\Z_3$-connected. 
Our \Cref{thm:5treesSZ5} verifies the first non-trivial case of the graph version of the Additive Base Conjecture on planar graphs.

By \Cref{thm:10-planar-SZ5}, every $10$-edge-connected planar graph admits a modulo $5$-orientation and hence a circular $\frac{5}{2}$-flow; thus, \Cref{thm:10-planar-SZ5} implies \Cref{thm:10-planar-circular-flow}. 
Moreover, Esperet, de Verclos, Le, and Thomass\'{e}~\cite{EJLT18} showed that for every graph $G \in \SZ_5$, every orientation $D$ of $G$ admits a $\Z_5$-ASF. 
Thus, \Cref{thm:10-planar-SZ5} also implies \Cref{thm:10-directed-planar-Z5}.

\subsection{Vertex Partitions and the Main Structural Theorem}
Let $G$ be a graph. 
A \emph{vertex partition} of $G$ is a collection $\{V_1, \dots, V_t\}$ such that
\[
\bigcup_{i=1}^t V_i = V(G)
\quad \text{and} \quad
V_i \cap V_j = \emptyset \ \text{for all distinct } i,j.
\]
If we let $\mathcal{P} := \{V_1,\ldots, V_t\}$, then
each set $V_i$ is a \emph{part} of $\mathcal{P}$, and the number of parts is denoted by $|\mathcal{P}|$. The edges of the induced subgraphs $G[V_1], \dots, G[V_t]$ are \emph{$\mathcal{P}$-internal edges}; all remaining edges of $G$ are \emph{$\mathcal{P}$-external edges}.

Given an edge $e \in E(G)$, \emph{contracting} $e$ means identifying the two endpoints of $e$ and deleting the resulting loop; the resulting graph is denoted by $G/e$. 
More generally, if $H \subseteq G$, then contracting all edges of $H$ yields the graph $G/H$. 
The graph $G/\mathcal{P}$ is formed from $G$ by identifying all vertices within each part of $\mathcal{P}$ into a single vertex (so $G/\mathcal{P}$ has $|\mathcal{P}|$ vertices) and deleting every resulting loop. 
To \emph{lift} a path $P = v_0 v_1 \dots v_n$ in a graph $G$, we delete all edges of $P$ and add a new edge $v_0 v_n$.

For each integer $a \ge 1$, let $aH$ denote the graph formed from $H$ by replacing each edge of $H$ with $a$ parallel edges, as shown in \Cref{fig:aK2}. 
Denote by $T_{a,b,c}$ (where $a,b,c \ge 0$) the graph on three vertices in which the three pairs of vertices are joined by $a$, $b$, and $c$ parallel edges, as shown in \Cref{fig:Tabc}. 
Finally, let $W_1$ and $W_2$ be two special planar graphs, each with four vertices and $12$ edges, shown in \Cref{fig:E1} and \Cref{fig:E2}.

\begin{figure}[!htbp]
    \centering
	\begin{subfigure}[t]{.24\textwidth}
		\centering
		\begin{tikzpicture}[scale=0.4]		
			\draw [line width=1pt, dotted] (0, 0.3) to (0, -0.3);
			\draw(0.8, -1.4) node[left=0.5mm]  {$a$};
			\draw [bend left=18, line width=0.6pt, black] (-3,0) to (3,0);
			\draw [bend right=18, line width=0.6pt, black] (-3,0) to (3,0);
			\draw [bend left=32, line width=0.6pt, black] (-3,0) to (3,0);
			\draw [bend right=32, line width=0.6pt, black] (-3,0) to (3,0);
				
			\draw [fill=white,line width=0.6pt] (-3,0) node[left=0.5mm] { } circle (7pt);  
			\draw [fill=white,line width=0.6pt] (3,0) node[right=0.5mm] { } circle (7pt);  
			\end{tikzpicture}
			\caption{$aK_2$}
			\label{fig:aK2}     
		\end{subfigure}
		\begin{subfigure}[t]{.24\textwidth}
			\centering
			\begin{tikzpicture}[scale=0.4]			
				\draw [rotate=240] [line width=1pt, dotted] (1, -0.6) to (1, -1.2);
				\draw [bend left=20, line width=0.6pt, black] (0,2) to  (-2.5,-2.5) ;
				\draw [bend right=18, line width=0.6pt, black] (0,2) to  (-2.5,-2.5) ;
				\draw [bend left=34, line width=0.6pt, black] (0,2) to  (-2.5,-2.5) ;
				\draw [bend right=32, line width=0.6pt, black] (0,2) to  (-2.5,-2.5) ;

				\draw [line width=1pt, dotted] (0, -2.2) to (0, -2.8);
				\draw [bend left=20, line width=0.6pt, black]  (-2.5,-2.5)  to (2.5,-2.5);
				\draw [bend right=18, line width=0.6pt, black]  (-2.5,-2.5)  to (2.5,-2.5);
				\draw [bend left=34, line width=0.6pt, black]  (-2.5,-2.5)  to (2.5,-2.5);
				\draw [bend right=32, line width=0.6pt, black]  (-2.5,-2.5)  to (2.5,-2.5);

				\draw [rotate=120] [line width=1pt, dotted](-1, -0.7) to (-1, -1.3);
				\draw [bend left=20, line width=0.6pt, black] (0,2) to (2.5,-2.5); 
				\draw [bend right=18, line width=0.6pt, black] (0,2) to (2.5,-2.5);
				\draw [bend left=34, line width=0.6pt, black] (0,2) to (2.5,-2.5);
				\draw [bend right=32, line width=0.6pt, black] (0,2) to (2.5,-2.5);
				
				\draw [fill=white,line width=0.6pt] (0,2) node[above=0.5mm] { } circle (8pt); 
				\draw [fill=white,line width=0.6pt] (2.5,-2.5) node[right=0.5mm] { } circle (8pt);     
				\draw [fill=white,line width=0.6pt] (-2.5,-2.5) node[left=0.5mm] { } circle (8pt);   
				\draw  (-2,0.2) node[left=0.5mm] {$a$};   
				\draw(0.8,-3.8) node[left=0.5mm] {$c$};  
				\draw  (3.3,0.2) node[left=0.5mm] {$b$};  
			\end{tikzpicture}
			\caption{$T_{a,b,c}$}
			\label{fig:Tabc}
		\end{subfigure} 
    \begin{subfigure}[t]{0.2\textwidth}
        \centering
\begin{tikzpicture}[scale=0.6]			
    \draw [line width=0.6pt, black] (0,3) to (-1.73,0); 
    \draw [line width=0.6pt, black] (0,3) to (1.73,0); 
    \draw [line width=0.6pt, black] (-1.73,0) to (1.73,0); 

    \draw [line width=0.6pt, black] (0,1.14) to (1.73,0); 
    \draw [bend left=18,line width=0.6pt, black] (0,1.14) to (1.73,0); 
    \draw [bend right=18,line width=0.6pt, black] (0,1.14) to (1.73,0); 

    \draw [line width=0.6pt, black] (0,1.14) to (-1.73,0); 
    \draw [bend left=18,line width=0.6pt, black] (0,1.14) to (-1.73,0); 
    \draw [bend right=18,line width=0.6pt, black] (0,1.14) to (-1.73,0); 

    \draw [line width=0.6pt, black] (0,1.14) to (0,3); 
    \draw [bend left=18,line width=0.6pt, black] (0,1.14) to (0,3); 
    \draw [bend right=18,line width=0.6pt, black] (0,1.14) to (0,3); 

    \draw [fill=white,line width=0.6pt] (0,3) node[above] {} circle (5pt) ; 
    \draw [fill=white,line width=0.6pt] (-1.73,0) node[left] {} circle (5pt) ; 
    \draw [fill=white,line width=0.6pt] (1.73,0) node[right] {} circle (5pt) ; 
    \draw [fill=white,line width=0.6pt] (0,1.14) node[below=2mm] {} circle (5pt); 
\end{tikzpicture}
        \caption{$W_1$}
        \label{fig:E1}
    \end{subfigure}
    \begin{subfigure}[t]{0.24\textwidth}
        \centering
\begin{tikzpicture}[scale=0.6]			
    \draw [line width=0.6pt, black] (0,3) to (0,1.14); 
    \draw [bend left=16,line width=0.6pt, black] (0,3) to (0,1.14); 
    \draw [bend right=16, line width=0.6pt, black] (0,3) to (0,1.14); 
    
    \draw [bend right=8, line width=0.6pt, black] (0,3) to (-1.73,0); 
    \draw [bend left=8, line width=0.6pt, black] (0,3) to (-1.73,0); 
    
    \draw [bend right=8, line width=0.6pt, black] (0,3) to (1.73,0); 
    \draw [bend left=8, line width=0.6pt, black] (0,3) to (1.73,0); 

    \draw [bend right=10, line width=0.6pt, black] (0,1.14) to (1.73,0); 
    \draw [bend left=10, line width=0.6pt, black] (0,1.14) to (1.73,0); 

    \draw [bend right=10, line width=0.6pt, black] (0,1.14) to (-1.73,0); 
    \draw [bend left=10, line width=0.6pt, black] (0,1.14) to (-1.73,0); 

    \draw [line width=0.6pt, black] (-1.73,0) to (1.73,0); 

    \draw [fill=white,line width=0.6pt] (0,3) node[above] {} circle (5pt) ; 
    \draw [fill=white,line width=0.6pt] (-1.73,0) node[left] {} circle (5pt) ; 
    \draw [fill=white,line width=0.6pt] (1.73,0) node[right] {} circle (5pt) ; 
    \draw [fill=white,line width=0.6pt] (0,1.14) node[below=2mm] {} circle (5pt); 
\end{tikzpicture}
        \caption{$W_2$}
        \label{fig:E2}
    \end{subfigure}    
    \caption{The 4 small graphs $aK_2$, $T_{a,b,c}$, $W_1$, and $W_2$}
    \label{fig:small graph with v(G)=4}
\end{figure}
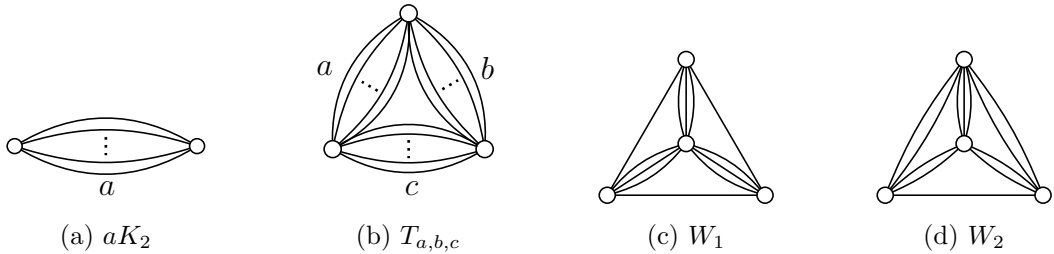

This paper investigates structural properties of graphs in $\SZ_5$. 
Intuitively, such graphs tend to arise under sufficiently high edge-connectivity and density. 
Rather than working directly with these parameters, we introduce a weight function inspired by that in~\cite{CL20}. 
Our Structural Theorem, together with this weight function, is also closely motivated by the Nash-Williams--Tutte Theorem~\cite{N61,T61}, which asserts that a graph $G$ contains $k$ edge-disjoint spanning trees if and only if every vertex partition $\{V_1, \dots, V_t\}$ of $G$ satisfies
\(
\sum_{i=1}^t d(V_i) - 2kt + 2k \geqslant 0.
\)

\begin{definition}\label{def:w(G)}
Let $G$ be a graph and $\mathcal{P}$ be a vertex partition of $G$. Denote $\mathcal{P}$ by $\{V_1, \dots, V_t\}$. The \emph{weight function} $w_G(\mathcal{P})$ of $\mathcal{P}$ is defined by
\[
w_G(\mathcal{P}) := \sum_{i=1}^{t} d(V_i) - 10t + 16.
\]
The \emph{weight} of $G$, denoted by $w(G)$, is the minimum of $w_G(\mathcal{P})$ over all partitions 
$\mathcal{P}$ of $G$.
\end{definition}

Let
\[
\mathcal{N}_5 := \{2K_2,\, 3K_2,\, T_{1,3,3},\, T_{2,2,3},\, W_1,\, W_2\}.
\]
The subscript ``$5$'' indicates that none of the graphs in $\mathcal{N}_5$ belongs to $\SZ_5$, as we will show later, in \Cref{thm:small-SZl-other}.
(Below $v(G):=|V(G)|$.)

\begin{definition}\label{def:S5-contractible}
Let $G$ be a connected graph with $v(G) \ge 2$. 
We say that $G$ is \emph{$\S_5$-contractible} if $w(G) \ge 0$ and $G/\mathcal{P} \notin \mathcal{N}_5$ for every vertex partition $\mathcal{P}$ of $G$.
\end{definition}

We can now state our recursive structural result. We defer its proof to \Cref{sec:proof of Thm}.

\begin{Stheorem}
If $G$ is an $\S_5$-contractible planar graph, then either: 
\begin{enumerate}[label=(\arabic*)]
\setlength{\itemsep}{0em}
\item\label{1-thm:F-w(G)-reduction} $v(G) \le 4$; or
\item\label{2-thm:F-w(G)-reduction} $G$ contains an $\S_5$-contractible proper subgraph; or
\item\label{3-thm:F-w(G)-reduction} by lifting certain paths in $G$, the resulting graph $G'$ is planar and contains an $\S_5$-contractible subgraph $H$; furthermore, contracting $H$ yields an $\S_5$-contractible graph.
\end{enumerate}
\end{Stheorem}

We make one remark concerning~\ref{3-thm:F-w(G)-reduction} in the Structural Theorem.
Note that the graph $G'/H$ is planar, since contracting any connected subgraph of a planar graph preserves planarity. The Structural Theorem yields the following result, which implies ~\Cref{thm:10-planar-SZ5}.

\begin{Mtheorem}
Every $\S_5$-contractible planar graph belongs to $\SZ_5$.
\end{Mtheorem}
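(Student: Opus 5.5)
We argue by induction on $|V(G)|+|E(G)|$, taking a minimal counterexample $G$: an $\S_5$-contractible planar graph not in $\SZ_5$ such that every $\S_5$-contractible planar graph with fewer vertices plus edges lies in $\SZ_5$. We use three standard facts about strong $\Z_5$-connectivity (as in \cite{LLLMMSZ14,CL20}).

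\begin{enumerate}[label=(F\arabic*)]
\item $\SZ_5$ is closed under adding edges.
\item If $H\subseteq G$ with $H\in\SZ_5$ and $G/H\in\SZ_5$, then $G\in\SZ_5$.
\item If $G'$ is obtained from $G$ by lifting paths and $G'\in\SZ_5$, then $G\in\SZ_5$. Indeed, an orientation of the new edge $v_0v_n$ is realized by orienting the lifted path consistently, which leaves the boundary at interior vertices unchanged.
\end{enumerate}

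We then apply the Structural Theorem to $G$ and treat its three outcomes in turn.

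\emph{Outcome (2).} Let $H$ be an $\S_5$-contractible proper subgraph of $G$. By minimality $H\in\SZ_5$. If $H$ is spanning, then $G\in\SZ_5$ by (F1). Otherwise we apply (F2), which requires $G/H$ to be $\S_5$-contractible. Every partition of $G/H$ lifts to a partition of $G$ with the same weight and the same quotient. Hence $w(G/H)\ge w(G)\ge0$, and no quotient of $G/H$ lies in $\N_5$. Also $G/H$ is planar, connected and smaller, so $G/H\in\SZ_5$ by minimality. The degenerate case where $G/H$ is a single vertex is trivial. Thus $G\in\SZ_5$.

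\emph{Outcome (3).} Here $G'$ is planar and contains an $\S_5$-contractible subgraph $H$ with $G'/H$ $\S_5$-contractible. Lifting does not change the vertex count and reduces the edge count, so $H$ and $G'/H$ are both smaller than $G$. By minimality both lie in $\SZ_5$, so $G'\in\SZ_5$ by (F2), and then $G\in\SZ_5$ by (F3).

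\emph{Outcome (1).} Here $v(G)\le4$. This is the base case and the main obstacle. We must show that every $\S_5$-contractible graph on at most four vertices lies in $\SZ_5$. It suffices to treat edge-minimal such graphs, by (F1).

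For two vertices, $w\ge0$ means $2d-20+16\ge0$, so $d\ge2$; excluding $2K_2$ and $3K_2$ forces $d\ge4$. Then $4K_2\in\SZ_5$, since with four parallel edges the net outflow ranges over $\{-4,-2,0,2,4\}$, which covers all of $\Z_5$.

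For three vertices, $w\ge0$ gives $a+b+c\ge7$. The two-part quotients force each vertex degree to be at least $4$, apart from the excluded cases. After discarding $T_{1,3,3}$ and $T_{2,2,3}$, the minimal survivors are configurations such as $T_{0,4,4}$ and $T_{1,3,4}$ or similar. We check these by hand, choosing an orientation of the heavy pair and reducing to the $4K_2$ case.

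For four vertices, $w\ge0$ requires $e(G)\ge12$, together with the quotient conditions. Planarity and the exclusion of $W_1$ and $W_2$ leave a finite list of minimal multigraphs. For each, we find a contractible $4K_2$, or a three-vertex $\SZ_5$ subgraph, whose contraction is again $\S_5$-contractible. Alternatively, we verify the boundary condition directly by enumerating all $\beta$, since each case is a finite check.

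Presumably the later \Cref{thm:small-SZl-other} organizes exactly this small-graph analysis. I expect the four-vertex enumeration to be the most laborious step.
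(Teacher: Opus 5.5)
Your treatment of outcomes (2) and (3) of the Structural Theorem is correct and is the paper's argument (via \Cref{prop:SZ5-property}). Inducting on $v(G)+e(G)$ is even slightly more careful than the paper's induction on $v(G)$, since it also covers a spanning $\S_5$-contractible proper subgraph.

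The genuine gap is the base case $v(G)\le 4$. Once the Structural Theorem is granted, this is the only substantive part of the proof, and you do not establish it. Your four-vertex strategy is to find a $4K_2$ or a three-vertex subgraph in $\SZ_5$ and contract it. This fails on, for instance, $2K_4$ and $Q_{3,3,3,3}$. Each has $12$ edges, minimum degree $6$ and multiplicity at most $3$, and is neither $W_1$ nor $W_2$, so it is $\S_5$-contractible by \Cref{thm:small-S5-contractible}. Yet neither contains $4K_2$. Every three-vertex subgraph is $T_{2,2,2}$ or $T_{0,3,3}$, with only $6$ edges, whereas a three-vertex graph in $\SZ_5$ needs at least $8$.

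Lifting cannot rescue such $12$-edge graphs either. After any lift at most $11$ edges remain. But an $\S_5$-contractible $H$ and an $\S_5$-contractible $G'/H$ would together need at least $4+8=12$ edges: a $4K_2$ and a three-vertex graph, or vice versa. So these graphs must be verified directly against all $\Z_5$-boundaries. Calling this ``a finite check'' is not the same as carrying it out, and you have not identified the list of graphs to be checked.

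\Cref{thm:small-SZl-other} does not do this for you. Its part (3) with $\ell=5$ requires $e(G)\ge 13$ and $\mu(G)\le 3$, which excludes exactly these $12$-edge graphs. The paper closes the four-vertex case by computer verification, with a hand proof in a separate note. It uses \Cref{thm:small-SZl-other}(1)--(2) for two and three vertices.

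There is a smaller hole in your three-vertex case. The edge-minimal $\S_5$-contractible graphs there are $T_{0,4,4}$, $T_{1,3,4}$, $T_{2,2,4}$ and $T_{2,3,3}$. Since $T_{2,3,3}$ contains no $4K_2$, ``reducing to the $4K_2$ case'' does not apply to it. It is easy to repair. Take $\mu(x,y)=2$ and $\mu(x,z)=\mu(y,z)=3$. A triple edge realizes every nonzero net outflow modulo $5$. So it suffices to pick the net outflow $s\in\{-2,0,2\}$ of the double edge at $y$ with $s\not\equiv\beta(y)$ and $s\not\equiv-\beta(x)\pmod{5}$. This is possible because $s$ takes three distinct residues and only two are excluded. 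Alternatively, simply cite \Cref{thm:small-SZl-other}(2).
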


We prove our Main Theorem via a straightforward induction, using the Structural Theorem; first we recall some properties of $\mathrm{S}_5$-contractibility.
And the base case $v(G)\le 4$ requires some preparation. So we prove our Main Theorem at the end of~\Cref{sec:proof-main-results}.

\section{Preliminaries}\label{sec:preliminary}

Let $G$ be a graph. Denote by $v(G)$ and $e(G)$ the numbers of vertices and edges of $G$. 
A \emph{$k$-path} is a path consisting of $k$ edges. 
Let $V_1 := \{v_1, \dots, v_m\}$, with $V_1\subseteq V(G)$.  We denote by $G[V_1]$ or 
by $G[v_1, \dots, v_m]$ the subgraph of $G$ induced by $V_1$. 
For brevity, if graphs $G_1$ and $G_2$ are isomorphic, then we simply write $G_1=G_2$.

For each $v \in V(G)$, let $N_G(v)$ denote the set of vertices adjacent to $v$. 
A \emph{$k$-vertex}, \emph{$k^+$-vertex}, or \emph{$k^-$-vertex}, is a vertex of degree exactly $k$, at least $k$, or at most $k$.
Denote by $\delta(G)$ the minimum degree of $G$.

For disjoint $X,Y \subseteq V(G)$, let $[X,Y]_G$ denote the set of edges with one endpoint in $X$ and the other in $Y$. When the graph $G$ is clear from context, the subscript $G$ is omitted. 
If $X=\{x\}$, we write $[x,Y]$ instead of $[\{x\},Y]$; similarly, if $Y=\{y\}$, we simply write $[x,y]$. 
For each $X \subseteq V(G)$, let $d(X) := |[X,X^c]|$; we call $d(X)$ the \emph{degree of $X$}.

For each pair $x,y \in V(G)$, let $\mu_G(x,y) := |[x,y]_G|$; this is the \emph{multiplicity} of $[x,y]$, i.e., the number of parallel edges between $x$ and $y$. 
The \emph{multiplicity} of $G$, denoted by $\mu(G)$, is given by $\mu(G) := \max\{\mu_G(x,y) : x,y \in V(G)\}.$

In this paper, we primarily study flow properties of planar graphs.
Let $G$ be a plane graph.
Denote by $F(G)$ the set of faces of $G$ and let $f(G) := |F(G)|$.
An edge or a vertex is \emph{incident} with a face $f$ if it lies on the boundary of $f$.
For a face $f \in F(G)$, let $V(f)$ denote the set of vertices incident with $f$; the 
\emph{degree} of $f$, denoted by $d(f)$, is the length of its boundary walk.
Analogous to vertex terminology, a \emph{$k$-face}, \emph{$k^+$-face}, or \emph{$k^-$-face} is a face of degree exactly $k$, at least $k$, or at most $k$.

Two faces $f$ and $f'$ are \emph{adjacent} if they share a common edge. 
A \emph{face chain} from $f_0$ to $f_t$ is a sequence of faces $f_0 \dots f_t$ with each consecutive pair sharing a common edge. 
Faces $f$ and $f'$ are \emph{weakly adjacent via $tK_2$} (equivalently, via $(t-1)$ $2$-faces) if there exists such a face chain $f_0 \dots f_t$ in which each intermediate face $f_i$ ($i\in\{1,\ldots,t-1\}$) is a 
$2$-face.  We allow $t=1$, in which case weak adjacency coincides with ordinary adjacency. 
When the specific value of $t$ is unimportant, we simply call $f$ and $f'$ \emph{weakly adjacent}.

\subsection{Two Recursive Methods: Edge Contraction and~\mbox{Path Lifting}}

Edge deletion is a widely used recursive technique in graph coloring problems. 
Its dual operation in a planar graph $G$ is \emph{edge contraction} in the planar dual $G^*$. 
Since the definition of edge contraction has already been given, we do not repeat it here. 
Contracting an edge in a planar graph preserves planarity and does not decrease edge connectivity. 
Consequently, this operation is quite useful in the study of flows and orientations.

In addition to edge contraction, we employ another recursive technique, \emph{path lifting}, which is well suited to inductive arguments. 
To \emph{lift} a path $v_0 v_1 \dots v_t$ in a graph $G$, we delete all of its edges  and add a new edge $v_0 v_t$. 
Although path lifting may reduce edge connectivity and may destroy planarity in general, it preserves planarity under certain conditions. 
In particular, if $v_0$ and $v_t$ lie on the boundary of a common face in a plane embedding of $G$, then lifting $v_0v_1\dots v_t$ results in a plane graph.

These two recursive methods—edge contraction and path lifting—serve as powerful tools for simplifying graph structures. 
Suppose that $G$ is a planar graph with a given $\Z_m$-boundary $\beta$. 
To establish the existence of a $\beta$-orientation of $G$, we may first lift appropriate paths in $G$ to simplify its structure. 
Next, we seek a subgraph $H$ with suitable properties in the resulting graph; such a subgraph is a \emph{reducible configuration}. 
If we find such an $H$, then we can contract $H$ and construct a $\beta$-orientation of the contracted graph, which we can then extend to a $\beta$-orientation of the original graph $G$. 
These steps will be used repeatedly in the subsequent proofs.

\subsection{Vertex Partition and Weight Function}

Throughout this paper, we do not consider the single-part partition of a graph $G$, unless $G=K_1$. A vertex partition $\mathcal{P}$ is \emph{trivial} if every part is a singleton, in which case $G/\mathcal{P}=G$. 
If each part of $\mathcal{P}$ induces a connected subgraph of $G$, then $G/\mathcal{P}$ can be viewed as the graph formed from $G$ by contracting the subgraph $G[V_i]$ for each $V_i\in\mathcal{P}$. 
In this situation, planarity is preserved: if $G$ is planar, then so is $G/\mathcal{P}$.

Let $\mathcal{P}$ be a partition of a graph $G$, denoted $\{V_1,\dots,V_t\}$, and let $n_i:=|V_i|$ whenever $i\in\{1,\ldots,t\}$. 
We define the \emph{type} of $\mathcal{P}$ as $(n_{i_1},\dots,n_{i_t})$, where $(i_1,\dots,i_t)$ is any 
permutation of $(1,\dots,t)$. 
To simplify notation, we write ``$*$'' to indicate omitted values, allowing us to emphasize only the most relevant part sizes. 
Thus, the type of $\mathcal{P}$ may be written as $(n_{i_1},\dots,n_{i_k},*)$ whenever $k\in\{1,\ldots,t\}$ and $n_{i_1},\dots,n_{i_k}$ are specified part sizes.

If a part size $n_{i_j}$ is known to be at least a given value $m_{i_j}$, then we write $m_{i_j}^+$ to indicate that $n_{i_j}\ge m_{i_j}$. 
This notation allows a more flexible description of types, for example,
\[
(m_{i_1}^+,m_{i_2}^+,\dots,m_{i_k}^+,n_{i_{k+1}},n_{i_{k+2}},*),
\]
where $m_{i_j}\le n_{i_j}$ for each $j\in\{1,\ldots,k\}$.

For instance, suppose $\mathcal{P}=\{V_1,\dots,V_t\}$, with $|V_i|=3$ for some $i$, $|V_j|=2$ for some $j\ne i$, and $|V_k|=1$ for all other $k$. 
Now the type of $\mathcal{P}$ can be written as $(3,2,1,\dots,1)$ or $(3,2,*)$ or $(2^+,2^+,*)$, depending on which part sizes we emphasize.
With this terminology, a partition $\mathcal{P}$ is \emph{trivial} if and only if it has type $(1,\dots,1)$; otherwise, it is \emph{nontrivial}.

Let $G$ be a graph and fix $H\subseteq G$ with $H$ connected. 
Contracting $H$ yields a new vertex $v_H$ in the graph $G/H$. 
Let $\mathcal{P}$ be a partition of $G/H$, denoted by $\{V_1,\dots,V_t\}$. 
If $v_H\in V_i$ for some $i\in\{1,\ldots,t\}$, then 
for all $j\ne i$ in $G$ we let $V_j':=V_j$, but we let
\[
V_i':=(V_i\setminus\{v_H\})\cup V(H).
\]
Let $\mathcal{P}_H:=\{V_1',\dots,V_t'\}$; note that $\mathcal{P}_H$ is a partition of $G$. We call $\mathcal{P}_H$ the \emph{$H$-restored partition} of $\mathcal{P}$. 
If $H$ is disconnected, then we apply this procedure to each of its components; the resulting partition is again the \emph{$H$-restored partition} of $\mathcal{P}$.

Recall that the \emph{weight function} of a partition $\mathcal{P}=\{V_1,\dots,V_t\}$ of $G$ is defined by
\[
w_G(\mathcal{P}) := \sum_{i=1}^{t} d(V_i) - 10t + 16.
\]
The \emph{weight} of $G$, denoted by $w(G)$, is the minimum value of $w_G(\mathcal{P})$ over all partitions $\mathcal{P}$ of $G$.
By definition, a larger value of $w(G)$ indicates a higher edge density in $G$. In particular, if $w(G)\ge 0$, then for the trivial partition $\mathcal{P}_0$ of $G$ we have that
\[
0 \le w(G) \le w_G(\mathcal{P}_0) = 2e(G) - 10v(G) + 16.
\]
This implies that
\begin{equation}
e(G) \ge 5v(G) - 8.
\label{edge-bound}
\tag{$\star$}
\end{equation}

If $G$ is $10$-edge-connected, then every partition $\mathcal{P} = \{V_1, \dots, V_t\}$ of $V(G)$,
satisfies
$
w_G(\mathcal{P}) \ge \sum_{i=1}^{t} 10 - 10t + 16 = 16 \ge 0,
$
and hence $w(G)\ge 0$.
Our structural results will be developed for graphs $G$ satisfying $w(G) \ge 0$, with a few exceptions.
To better understand the weight function, we have the following observations.

\begin{enumerate}[label=(\arabic*)]
\setlength{\itemsep}{0em}

\item
For every graph $G$ and partition $\mathcal{P}$, the value $w_G(\mathcal{P})$ is always even.
Let $G / \mathcal{P}$ be the graph formed from $G$ by contracting each part of $\mathcal{P}$.
If we let $G':=G/\mathcal{P}$, then
\[
w_G(\mathcal{P}) = w_{G'}(\mathcal{P}_0)
= 2e(G') - 10v(G') + 16 \equiv 0 \pmod{2},
\]
where $\mathcal{P}_0$ denotes the trivial partition of $G'$.

\item \label{connected}
Recall that $w(G)=\min_{\mathcal{P}} \{w_G(\mathcal{P})\}$. To compute $w(G)$, it suffices to 
consider partitions in which each part induces a connected subgraph of $G$.
Indeed, suppose that $\mathcal{P}=\{V_1,V_2,\dots,V_t\}$ and that $V_1=V_{11}\cup V_{12}$, where there are no edges between $V_{11}$ and $V_{12}$ in $G$.
Define a refined partition by letting
$
\mathcal{P}':=\{V_{11},V_{12},V_2,\dots,V_t\}.
$
A direct calculation shows that
\(
w_G(\mathcal{P}') = w_G(\mathcal{P}) - 10.
\) Moreover, if a graph $G$ satisfies $w(G)\ge0$ and is not $\S_5$-contractible, then there exists a partition $\P$ such that $G/\P\in\N_5$ where every part of $\mathcal{P}$ induces a connected subgraph of $G$. Otherwise, refining a part of $\mathcal{P}$ would result in a partition with weight at most $-8$, since every graph in $\N_5$ has weight at most $2$, contradicting $w(G)\ge 0$.

\item
For each graph $G$ and each subgraph $H$ of $G$, the contracted graph $G/H$ satisfies
\[
w(G/H) \ge w(G).
\]
Indeed, for any partition $\mathcal{P}$ of $G/H$, let $\mathcal{P}_H$ denote the corresponding $H$-restored partition of $G$.
So the inequality $w(G/H) \ge w(G)$ follows from the fact that

\[
w_{G/H}(\mathcal{P})= w_G(\mathcal{P}_H). 
\]
\end{enumerate}

In the proof of the Structural Theorem, given a graph $G$ with $w(G)\ge 0$, we often aim to find and 
contract a reducible configuration. This forms a smaller graph $G'$ with
\[
w(G') \ge w(G) \ge 0.
\]

But sometimes this approach fails. In such cases, we may first lift certain paths in $G$ to produce a modified graph in which a reducible configuration can be found and contracted. Nevertheless, there remain a few exceptional graphs for which this strategy fails and which exhibit undesirable structural properties.
In our Structural Theorem, $\mathcal{N}_5$ denotes a collection of small graphs that have non-negative weight but are too sparse to possess the desired structural properties. We make the following observation.

\begin{observation}\label{ob:weight-K2-Tabc}
The following $3$ statements all hold.
\begin{enumerate}[label=(\arabic*)]
\setlength{\itemsep}{0em}
    \item $w(aK_2) = 2a - 4$ for all $a \ge 1$.
    \item $w(T_{a,b,c}) = 2(a + b + c) - 14$ for all $0 \le a,b,c \le 5$.
    \item\label{w-N5}
    For graphs in $\mathcal{N}_5$, we have
    \[
	    w(3K_2) = 2\mbox{\quad and }
    \quad
    w(2K_2) = w(T_{1,3,3}) = w(T_{2,2,3}) = w(W_1) = w(W_2) = 0.
    \]
\end{enumerate}
\end{observation}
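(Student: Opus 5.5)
The plan is to compute each weight directly from \Cref{def:w(G)}, by enumerating all vertex partitions. The graphs involved have at most four vertices, so the enumeration is short. Recall the conventions: we exclude the single-part partition unless $G=K_1$, and $w_G(\mathcal{P})=w_{G/\mathcal{P}}(\mathcal{P}_0)=2e(G/\mathcal{P})-10v(G/\mathcal{P})+16$. So for each partition we only need to count the vertices and the surviving (external) edges of the quotient graph.

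For (1), the graph $aK_2$ has only one allowed partition, the trivial one. This gives $w(aK_2)=2a-20+16=2a-4$.

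For (2), the graph $T_{a,b,c}$ has two kinds of allowed partitions.
\begin{itemize}
\item The trivial partition gives $2(a+b+c)-30+16=2(a+b+c)-14$.
\item A partition of type $(2,1)$ merges two vertices, say the pair joined by $a$ edges. The quotient is $(b+c)K_2$, with weight $2(b+c)-4$.
\end{itemize}
The trivial value is the minimum exactly when $2a-14\le -4$, i.e.\ $a\le 5$, and symmetrically for $b$ and $c$. This is precisely the hypothesis $0\le a,b,c\le 5$, so (2) follows.

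For (3), the values for $2K_2$, $3K_2$, $T_{1,3,3}$ and $T_{2,2,3}$ follow immediately from (1) and (2): we get $0$, $2$, $2\cdot 7-14=0$ and $0$. For $W_1$ and $W_2$ (four vertices, $12$ edges), we check every partition type.
\begin{itemize}
\item The trivial partition gives $24-40+16=0$.
\item Type $(2,1,1)$ contracts a pair $\{x,y\}$, leaving $T$ with $12-\mu(x,y)$ edges. Its value is $2(12-\mu)-30+16=10-2\mu$. Since $\mu(W_1)=\mu(W_2)=3$, this is at least $4$.
\item Type $(3,1)$ leaves $d(v)K_2$, with value $2d(v)-4$. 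Each vertex has degree at least $5$ (the minimum is $5$ in both $W_1$ and $W_2$), so this is positive.
\item Type $(2,2)$ leaves $d(\{x,y\})K_2$. It suffices that each such cut has size at least $2$, which is clear since both graphs are $2$-edge-connected (indeed, every such cut is much larger).
\end{itemize}
Hence the minimum is $0$, attained by the trivial partition.

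The only real work is the bookkeeping for $W_1$ and $W_2$ from the figures. In $W_1$ the centre is joined to each outer vertex by $3$ edges, and the outer triangle has simple edges. In $W_2$ the multiplicities are $3,2,2,2,2,1$. From these one checks that $\mu\le 3$ and that every vertex has degree at least $5$. Everything else is routine arithmetic.
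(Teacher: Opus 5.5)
Your proposal is correct and follows the paper's proof. You enumerate the admissible partition types, and for $T_{a,b,c}$ you compare the trivial value $2(a+b+c)-14$ with the $(2,1)$ values $2(x+y)-4$, using $a,b,c\le 5$ so that the trivial partition attains the minimum. For $W_1$ and $W_2$, which the paper dismisses as ``a direct calculation,'' you check the types $(2,1,1)$, $(3,1)$ and $(2,2)$ explicitly, using $\mu\le 3$ and minimum degree $5$, and that verification is correct.
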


\begin{proof}
The graph $aK_2$, with $a \ge 1$, has only one possible partition type, $(1,1)$; hence
\[
w(aK_2) = 2a - 4.
\]
In particular, $w(2K_2) = 0$ and $w(3K_2) = 2$.

For $T_{a,b,c}$ with $0 \le a,b,c \le 5$, let $\mathcal{P}$ be a partition of it. Then
\[
w_{T_{a,b,c}}(\mathcal{P}) =
\begin{cases}
2(a+b+c)-14, & \text{if $\mathcal{P}$ has type $(1,1,1)$},\\[4pt]
2(x+y)-4\text{~where~}\{x,y\}\in\{\{a,b\},\{a,c\},\{b,c\}\},
& \text{if $\mathcal{P}$ has type $(2,1)$}.
\end{cases}
\]
Since $a,b,c \le 5$, the minimum value is attained when $\mathcal{P}$ has type $(1,1,1)$.
Therefore,
\[
w(T_{a,b,c}) = 2(a+b+c)-14.
\]
In particular, $w(T_{1,3,3}) = w(T_{2,2,3}) = 0$.

For the graphs $W_1$ and $W_2$, a direct calculation shows that the trivial partition yields the minimum weight. Indeed,
\(
w(W_1) = w(W_2) = 2 \times 12 - 10 \times 4 + 16 = 0.
\)
\end{proof}

\subsection{\texorpdfstring{$\S_5$}{S5}-Contractible Graphs}
In the inductive proof of the Structural Theorem, we observe that the graphs under consideration never reduce to a member of $\mathcal{N}_5$ through any sequence of edge contractions. This observation motivates the introduction of a class of graphs that are stable under contraction and exclude these exceptional configurations. The properties listed below describe the class of graphs that arise naturally in our inductive arguments and exclude the exceptional configurations in $\mathcal{N}_5$.

\begin{proposition}\label{prop:property-S5}
Let $G$ be a connected graph with $v(G) \ge 2$.
\begin{enumerate}[label=(\arabic*)]
\setlength{\itemsep}{0em}
    \item\label{1_prop:property-S5}
    If $G$ is not $\S_5$-contractible, then $w(G) \le 2$.
    
    \item\label{2_prop:property-S5}
    If $G$ is $\S_5$-contractible, then $G$ is $4$-edge-connected and $e(G) \ge 5v(G) - 8$.
    
    \item\label{3_prop:property-S5}
    If $G$ is $\S_5$-contractible, then every graph $G'$ formed by contracting a subset of edges of $G$ is also $\S_5$-contractible, and moreover $w(G') \ge w(G)$.
    
    \item\label{4_prop:property-S5}
    If $G$ is $\S_5$-contractible, then every graph $G'$ formed from $G$ by adding edges (without adding new vertices) is also $\S_5$-contractible. Moreover, $w(G') \ge w(G)$.
\end{enumerate}
\end{proposition}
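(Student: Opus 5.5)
We prove the four parts in order; each follows almost directly from the definitions together with the observations on the weight function and Observation~\ref{ob:weight-K2-Tabc}.

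For \ref{1_prop:property-S5}, suppose $G$ is not $\S_5$-contractible. Then either $w(G)<0$, and we are done, or there is a partition $\P$ with $G/\P\in\N_5$. In the latter case $w(G)\le w_G(\P)=w_{G/\P}(\P_0)$, where $\P_0$ is the trivial partition of $G/\P$. By Observation~\ref{ob:weight-K2-Tabc}\ref{w-N5} and the fact that the trivial partition realizes the weight of each member of $\N_5$, this quantity is at most $w(3K_2)=2$.

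For \ref{2_prop:property-S5}, the edge bound is exactly \eqref{edge-bound}, using $w(G)\ge 0$. For $4$-edge-connectivity, take an edge cut $[X,X^c]$ with $\emptyset\ne X\subsetneq V(G)$, and consider the two-part partition $\P=\{X,X^c\}$. Then $w_G(\P)=2d(X)-20+16=2d(X)-4\ge 0$, so $d(X)\ge 2$. Moreover $G/\P=d(X)K_2$. Since $G$ is $\S_5$-contractible, $G/\P\notin\{2K_2,3K_2\}$, so $d(X)\notin\{2,3\}$ and hence $d(X)\ge4$. Connectivity rules out $d(X)=0$ anyway; for $d(X)=1$ we would get $w_G(\P)=-2<0$, which is also excluded.

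For \ref{3_prop:property-S5}, let $G'$ be obtained by contracting an edge set $F$, and let $H$ be the spanning subgraph with edge set $F$, so that $G'=G/H$. Every partition $\P$ of $G'$ has an $H$-restored partition $\P_H$ of $G$ with $w_{G'}(\P)=w_G(\P_H)$; this gives $w(G')\ge w(G)\ge0$. Also $G'/\P=G/\P_H$, since contracting in stages equals contracting the restored parts. Hence if $G'/\P\in\N_5$ then $G/\P_H\in\N_5$, which is a contradiction. One small point: if $G'$ collapses to a single vertex, then $v(G')\ge2$ fails. We interpret the statement for contractions leaving at least two vertices, and note that the single-part partition is excluded by convention.

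For \ref{4_prop:property-S5}, adding edges without adding vertices only increases each $d(V_i)$, so $w_{G'}(\P)\ge w_G(\P)$ for every $\P$, giving $w(G')\ge w(G)\ge 0$. The remaining point, which is the only step with real content, is to exclude $G'/\P\in\N_5$. Here $G/\P$ is a spanning subgraph of $G'/\P$ on the same vertex set, and $w_G(\P)\ge0$. Write $H_0:=G/\P$ and $H_0':=G'/\P$. If $e(H_0')>e(H_0)$, then $w_{G}(\P)\le w_{G'}(\P)-2\le 0$. This forces $H_0'=3K_2$ and $H_0=2K_2$ (a member of $\N_5$, which is a contradiction), since every other member of $\N_5$ has weight $0$ and would force $w_G(\P)<0$. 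If $e(H_0')=e(H_0)$, then $H_0=H_0'\in\N_5$, again a contradiction. This finishes the proof. The main (mild) obstacle is this case analysis in \ref{4_prop:property-S5}, which relies on the weights listed in Observation~\ref{ob:weight-K2-Tabc}.
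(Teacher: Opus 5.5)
Your proof is correct and follows essentially the same route as the paper: restored partitions for (3), and for (4) a comparison of $G/\mathcal{P}$ with $G'/\mathcal{P}$ via the weights in Observation~\ref{ob:weight-K2-Tabc}. The differences are minor and in your favor. You handle whole edge sets at once rather than reducing to a single contracted or added edge by induction. You also treat the cut sizes $d(X)\in\{1,2,3\}$ separately, whereas the paper simply asserts $G/\mathcal{P}=3K_2$. Your caveat about contracting down to $K_1$ in (3) is a fair point about the statement, which the paper does not address.
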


\begin{proof}
Suppose first that $G$ is not $\S_5$-contractible.
Then either $w(G) < 0$, or there exists a partition $\mathcal{P}$ of $G$ such that $G/\mathcal{P} \in \mathcal{N}_5$.
In either case, we have $w(G) \le 2$, since $w(G_0) \le 2$ for every graph $G_0 \in \mathcal{N}_5$ by \Cref{ob:weight-K2-Tabc}.
This proves~\ref{1_prop:property-S5}.

Now assume that $G$ is $\S_5$-contractible.
Suppose there exists an edge-cut $[X, X^c]$ with $d(X) \le 3$, and consider the partition
\(
\mathcal{P} = \{X, X^c\}.
\)
Then the contracted graph $G/\mathcal{P}$ is $3K_2$, which belongs to $\mathcal{N}_5$, contradicting the $\S_5$-contractibility of $G$.
Therefore, $G$ is $4$-edge-connected.
The inequality $e(G) \ge 5v(G) - 8$ 
is inequality \eqref{edge-bound} above.
This completes the proof of~\ref{2_prop:property-S5}.

It remains to prove~\ref{3_prop:property-S5} and~\ref{4_prop:property-S5}.
It suffices to consider the cases in which a single edge is contracted or added, as the general statements then follow by induction.

First, let $G':= G / e$ for some $e = xy \in E(G)$.  
Since $G$ is $\S_5$-contractible, $w(G') \ge w(G) \ge 0$.  
Let $H := G[\{x,y\}]$. If there exists a partition $\mathcal{P}$ of $G'$ with $G'/\mathcal{P} \in \mathcal{N}_5$, then the corresponding $H$-restored partition $\mathcal{P}_H$ yields $G/\mathcal{P}_H \in \mathcal{N}_5$, a contradiction.

Next, let $G' := G + e$ for some $e = xy$ with $x,y \in V(G)$.  
Again, $w(G') \ge w(G) \ge 0$.  
If there exists a partition $\mathcal{P}$ of $G'$ with $G'/\mathcal{P} \in \mathcal{N}_5$, then either $G/\mathcal{P} \in \mathcal{N}_5$, contradicting the $\S_5$-contractibility of $G$, or $G/\mathcal{P}$ is obtained from some $G_0 \in \mathcal{N}_5$ by deleting a single edge.  
In the latter case, if $G_0 = 3K_2$, then $G/\mathcal{P} = 2K_2 \in \mathcal{N}_5$, again a contradiction; otherwise, $w(G/\mathcal{P}) = -2 < 0$, implying $w(G) < 0$, also a contradiction.

This completes the proof of~\ref{3_prop:property-S5} and~\ref{4_prop:property-S5}.
\end{proof}

Identifying $\S_5$-contractible graphs with small order is essential. We now characterize all $\S_5$-contractible graphs with order at most $4$.

\begin{theorem}\label{thm:small-S5-contractible}
Let $G$ be a graph with $v(G) \le 4$. Then $G$ is $\S_5$-contractible if and only if one of the following holds.
\begin{enumerate}[label=(\arabic*)]
\setlength{\itemsep}{0em}
    \item\label{1_thm:small-S5-contractible} $v(G) = 2$ and $e(G) \ge 4$; equivalently, $G=aK_2$ with $a \ge 4$.  
    \item\label{2_thm:small-S5-contractible} $v(G) = 3$, $e(G) \ge 8$, and $\delta(G) \ge 4$; equivalently, $G =T_{a,b,c}$ with $a+b+c \ge 8$ and $\min\{a+b,a+c,b+c\} \ge 4$.  
    \item\label{3_thm:small-S5-contractible} $v(G) = 4$ and $G$ has a spanning subgraph $G'$ with $e(G') \ge 12$, $\delta(G') \ge 4$, $\mu(G') \le 4$, and $G' \notin \{W_1, W_2\}$.  
\end{enumerate}
\end{theorem}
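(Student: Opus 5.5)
The plan is to split by $v(G)\in\{2,3,4\}$. In each case we list every nontrivial partition $\mathcal{P}$ and note that $G/\mathcal{P}$ has fewer vertices. We then read off the conditions $w_G(\mathcal{P})\ge 0$ and $G/\mathcal{P}\notin\mathcal{N}_5$ directly. By observation~\ref{connected}, it is enough to look at partitions whose parts induce connected subgraphs. The tools are \Cref{ob:weight-K2-Tabc} and \Cref{prop:property-S5}.

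\emph{Case $v(G)=2$.} The only partition is the trivial one, with $G=aK_2$. We need $w=2a-4\ge 0$ and $a\notin\{2,3\}$, which is equivalent to $a\ge 4$.

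\emph{Case $v(G)=3$.} Write $G=T_{a,b,c}$.
\begin{itemize}
\item A partition of type $(2,1)$ contracts $G$ to $xK_2$, where $x$ is the degree of the singleton. By the case $v(G)=2$, this forces every vertex degree to be at least $4$, i.e.\ $\delta(G)\ge 4$.
\item The trivial partition requires $2(a+b+c)-14\ge 0$, i.e.\ $a+b+c\ge 7$, together with $G\notin\{T_{1,3,3},T_{2,2,3}\}$.
\item Suppose the pairwise sums are all at least $4$ and $a+b+c=7$. Then each multiplicity is at most $3$, and the only such triples are permutations of $(1,3,3)$ and $(2,2,3)$. So the conditions together are equivalent to $a+b+c\ge 8$ and $\delta\ge 4$.
\end{itemize}
Since $T_{a,b,c}$ has only four partitions, the minimum weight can be computed directly without the restriction $a,b,c\le 5$. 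This gives both directions of \ref{2_thm:small-S5-contractible}.

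\emph{Case $v(G)=4$, direction ``$\Leftarrow$''.} By \Cref{prop:property-S5}\ref{4_prop:property-S5}, it suffices to show that $G'$ itself is $\S_5$-contractible.
\begin{itemize}
\item The trivial partition gives $w=2e(G')-24\ge 0$, and $G'\notin\{W_1,W_2\}$ by hypothesis.
\item Partitions of type $(3,1)$ give $xK_2$ with $x$ a vertex degree, so $x\ge 4$.
\item For type $(2,2)$ with parts $\{x,y\},\{z,w\}$: if the cut had at most $3$ edges, then $e(G')\le \mu(x,y)+\mu(z,w)+3\le 11$, a contradiction.
\item For type $(2,1,1)$ merging $x,y$, the result is a $3$-vertex graph with at least $e(G')-\mu(x,y)\ge 8$ edges. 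Its minimum degree is at least $4$: the two untouched vertices keep their degrees, and the merged vertex has degree at least $4$ by the $(2,2)$ computation above. Hence it is $\S_5$-contractible by the $3$-vertex case.
\end{itemize}

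\emph{Case $v(G)=4$, direction ``$\Rightarrow$''.} Suppose $G$ is $\S_5$-contractible. By the $(3,1)$, $(2,2)$ and trivial partitions, $G$ is $4$-edge-connected with $e(G)\ge 12$ and $G\notin\{W_1,W_2\}$. We must produce a spanning subgraph $G'$ with $\mu(G')\le 4$ that still has $\delta\ge 4$, $e\ge 12$, and is not $W_1$ or $W_2$.

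The natural first try is to truncate every multiplicity to $\min\{\mu,4\}$ and then, if necessary, delete further edges. This is the main obstacle: truncation may drop $e$ below $12$, create a vertex of degree below $4$, or land exactly on $W_1$ or $W_2$. I would handle it by case analysis on the pairs $xy$ with $\mu(x,y)\ge 5$.
\begin{itemize}
\item For such a pair, the contraction of $xy$ is $\S_5$-contractible by the $3$-vertex case. So the other five pairs carry at least $8$ edges, and the degree of $\{x,y\}$ is at least $4$.
\item These facts should show that keeping $4$ copies of $xy$ preserves $\delta\ge 4$ and leaves at least $12$ edges.
\item If the outcome is $W_1$ or $W_2$, then some edge of $G$ was discarded. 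Restoring it and instead deleting a different edge, or simply keeping one more edge, should avoid $\{W_1,W_2\}$ while preserving all the other conditions. This uses the fact that $W_1$ and $W_2$ have exactly $12$ edges.
\end{itemize}
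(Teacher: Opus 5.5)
\textbf{Where the proposal stands.} Your arguments for $v(G)\in\{2,3\}$ and for the direction ``$\Leftarrow$'' when $v(G)=4$ are complete and agree with the paper. You check the $(2,2)$ and $(2,1,1)$ partitions more explicitly than the paper does. The gap is the direction ``$\Rightarrow$'' for $v(G)=4$, the only step that needs an idea, which you leave at ``should''.

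As written it does not go through, for two reasons. First, your bound ``the other five pairs carry at least $8$ edges'' holds in $G$ before truncation. If a second pair also has multiplicity at least $5$ and gets truncated, that bound is lost, so ``at least $12$ edges remain'' does not follow. Second, your repair for landing on $W_1$ or $W_2$ is not valid. Under truncation every discarded edge lies in a pair of multiplicity at least $5$, so ``keeping one more edge'' recreates multiplicity $5$, and ``deleting a different edge'' is unspecified.

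\textbf{The missing observation.} None of the three obstacles you list can actually occur.
\begin{itemize}
\item If no pair is truncated, the result is $G$ itself, and $G\notin\{W_1,W_2\}$ by the trivial partition.
\item If some pair is truncated, it keeps multiplicity exactly $4$, whereas $\mu(W_1)=\mu(W_2)=3$. So $W_1$ and $W_2$ are excluded automatically.
\item A vertex on a truncated pair keeps degree at least $4$, and all other degrees are unchanged. So $\delta\ge 4$.
\item For $e\ge 12$, argue by the number of truncated pairs.
  \begin{itemize}
  \item With one truncated pair $x_1x_2$, use $e(G)-\mu_G(x_1,x_2)=e(G/x_1x_2)\ge 8$, by \Cref{prop:property-S5} and the $3$-vertex case.
  \item With two disjoint truncated pairs, the other four pairs form a cut of size at least $4$.
  \item With two truncated pairs meeting at a vertex, the three pairs at the fourth vertex are untouched and carry at least $4$ edges.
  \item With three or more truncated pairs, $12$ is immediate.
  \end{itemize}
\end{itemize}

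\textbf{How the paper avoids the case split.} It contracts a single heavy pair $x_1x_2$. From the $3$-vertex case it takes a spanning subgraph of $G/x_1x_2$ with $\mu\le 4$, $e\ge 8$ and $\delta\ge 4$; truncating there is harmless. It then uncontracts with exactly four copies of $x_1x_2$. That $4$-fold edge gives $e\ge 12$, degree at least $4$ at $x_1$ and $x_2$, and $G'\notin\{W_1,W_2\}$ all at once.
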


\begin{proof}
We proceed by cases on $v(G)$.

\textbf{Case 1: $\bm{v(G) = 2}$.}  
If $G$ is $\S_5$-contractible, then \Cref{prop:property-S5}\ref{2_prop:property-S5} implies $e(G) \ge 4$.  
Conversely, if $e(G) \ge 4$, then 
\(
w(G) = 2e(G) - 10\times 2 + 16 \ge 0.
\)  
Since the only possible partition is trivial and $G/\mathcal{P} = aK_2 \notin \mathcal{N}_5$ for $a \ge 4$, it follows that $G$ is $\S_5$-contractible.

\textbf{Case 2: $\bm{v(G) = 3}$.}  
If $G$ is $\S_5$-contractible, then \Cref{prop:property-S5}\ref{2_prop:property-S5} implies $\delta(G) \ge 4$ and $e(G)\ge7$.
But if $e(G) = 7$ and $\delta(G)\ge 4$, then $G\in\{T_{1,3,3},T_{2,2,3}\}\subseteq\mathcal{N}_5$, contradicting $\S_5$-contractibility. Hence, $e(G) \ge 8$.

Conversely, if $G=T_{a,b,c}$ with $a+b+c \ge 8$ and $\min\{a+b,a+c,b+c\} \ge 4$, then for each partition $\mathcal{P}$ of $G$, the graph $G/\mathcal{P}$ is either $G$ itself or a $2$-vertex graph with at least $4$ parallel edges.  
Thus, $G/\mathcal{P} \notin \mathcal{N}_5$.
Finally, $G$ is $\S_5$-contractible since
\[
w(G) \ge \min\big\{2\times 8 - 30 + 16,\ 2\times 4 - 20 + 16\big\} \ge 0.
\]

\textbf{Case 3: $\bm{v(G) = 4}$.}  
If $G$ is $\S_5$-contractible, then \Cref{prop:property-S5}\ref{2_prop:property-S5} implies $\delta(G) \ge 4$ and $e(G)\ge12$.
If $\mu(G) \le 4$, then let $G' := G$, and we are done.   
Otherwise, assume $\mu(G) \ge 5$ for some edge $x_1x_2$. Let $G_1 := G / G[x_1,x_2]$ and denote the contracted vertex by $x$. By \Cref{prop:property-S5}\ref{3_prop:property-S5}, $G_1$ is $\S_5$-contractible with 
$3$ vertices $x,y,z$.  
If any edge in $G_1$ has multiplicity at least $5$, then we can repeatedly remove excess edges while 
maintaining $e \ge 8$, $\delta \ge 4$; 
eventually we obtain a spanning subgraph 
$G_1' \subseteq G_1$ with these properties and also with $\mu\le 4$.  
Undoing the contraction of $x_1x_2$ in $G_1'$ yields a spanning subgraph $G' \subseteq G$, where the edge 
$x_1x_2$ is restored with multiplicity $4$ (all other edges unchanged). Then $G'$ satisfies $e(G') \ge 12$, 
$\delta(G') \ge 4$, $\mu(G') \le 4$, and $G' \notin\{W_1, W_2\}$.

Conversely, if $G$ contains such a spanning subgraph $G'$, then for each partition $\mathcal{P}$ of $G'$, we can check that $G'/\mathcal{P} \notin \mathcal{N}_5$ and $w_{G'}(\mathcal{P}) \ge 0$.  
Indeed, trivial, $2$-part, and $3$-part partitions $\mathcal{P}$ each satisfy $w_{G'}(\mathcal{P}) \ge 0$ by the bounds on $e(G')$, $\delta(G')$, and $\mu(G')$.  
Hence $G'$ is $\S_5$-contractible, and by \Cref{prop:property-S5}\ref{4_prop:property-S5}, so is $G$.
\end{proof}

\section[ProofOfS5]{Proof of the Main Theorem}\label{sec:proof-main-results}

Building on the Structural Theorem stated above, we now apply it to prove \Cref{thm:10-planar-SZ5}.
We begin by summarizing several basic properties of all graphs in $\SZ_\ell$; we also characterize all such 
graphs with small order.

\begin{proposition}\label{prop:SZ5-property}
Fix a graph $G$ and $H\subseteq G$.
If $\ell$ is an odd integer and $\ell \ge 3$, then
the following statements hold:
\begin{enumerate}[label=(\arabic*)]
\setlength{\itemsep}{0em}
    \item $K_1 \in \SZZ{\ell}$.
    \item\emph{\cite{L08}}
    If $G \in \SZZ{\ell}$, then both $G/e$ and $G+e'$ belong to $\SZZ{\ell}$ for each edge $e \in E(G)$ and each new edge $e'$ with endpoints in $V(G)$ but $e' \notin E(G)$.
    \item\label{thm:SZl-contraction-reserve}\emph{\cite{L08}}
    If $H \in \SZZ{\ell}$ and $G/H \in \SZZ{\ell}$, then $G \in \SZZ{\ell}$.
    \item\label{thm:SZl-lifting}
    If $G'$ is formed from $G$ by lifting certain paths and $G' \in \SZZ{\ell}$, then $G \in \SZZ{\ell}$.
\end{enumerate}
\end{proposition}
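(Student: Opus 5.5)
Items (2) and (3) are cited from \cite{L08}, so the work lies in (1) and (4).

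For (1), $K_1$ has no edges. The only $\Z_\ell$-boundary is $\beta\equiv 0$, since the single value must sum to $0$. The empty orientation then gives $d^+(v)-d^-(v)=0\equiv\beta(v)$, so $K_1\in\SZZ{\ell}$.

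For (4), I will first reduce to lifting a single path, by induction on the number of lifted paths. Lifting the paths one at a time produces a sequence $G=G_0,G_1,\dots,G_s=G'$, where each $G_{i+1}$ is obtained from $G_i$ by lifting one path. If we know that $G_{i+1}\in\SZZ{\ell}$ implies $G_i\in\SZZ{\ell}$, then going backwards from $G'$ gives $G\in\SZZ{\ell}$.

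So let $P=v_0v_1\dots v_t$ be a path in $G$, and let $G'$ be obtained by deleting $E(P)$ and adding a new edge $e'=v_0v_t$. Note that $V(G')=V(G)$. Fix a $\Z_\ell$-boundary $\beta$ of $G$. It is also a $\Z_\ell$-boundary of $G'$, so there is a $\beta$-orientation $D'$ of $G'$. By symmetry we may assume $D'$ orients $e'$ from $v_0$ to $v_t$.

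Define $D$ on $G$ by keeping the orientation of every edge outside $P$ and orienting $P$ as the directed path $v_0\to v_1\to\dots\to v_t$. Each internal vertex $v_i$ with $0<i<t$ gains one out-arc and one in-arc, so its net outdegree is unchanged. The vertex $v_0$ loses the out-arc $e'$ and gains the out-arc $v_0v_1$. Similarly, $v_t$ loses the in-arc $e'$ and gains the in-arc $v_{t-1}v_t$. Hence $d_D^+(v)-d_D^-(v)=d_{D'}^+(v)-d_{D'}^-(v)\equiv\beta(v)$ for every vertex $v$, so $D$ is a $\beta$-orientation of $G$.

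Some care is needed with the definition of ``path'', because lifting in the paper may be applied to walks, which can repeat vertices. The same net-degree count still works: each traversal through a vertex contributes one in-arc and one out-arc. I expect this bookkeeping to be the only, and mild, obstacle.
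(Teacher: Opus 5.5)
Your proposal is correct and matches the paper's argument: (1) is immediate, (2) and (3) are taken from the literature, and for (4) the paper likewise notes that any $\Z_\ell$-boundary of $G$ is one of $G'$ and that the $\beta$-orientation of $G'$ extends naturally to $G$. You just make that extension explicit, by orienting the lifted path in the direction of the new edge and checking net outdegrees, and by handling several lifted paths one at a time.
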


Statement~(1) is immediate.
Statements~(2) and (3) are well known; see \cite{L08} for detailed proofs.
For~(4), observe that any $\Z_{\ell}$-boundary $\beta$ of $G$ is also a $\Z_\ell$-boundary of $G'$, and the corresponding $\beta$-orientation of $G'$ can be naturally extended to a $\beta$-orientation of $G$. Therefore, $G \in \SZZ{\ell}$.

\medskip
For graphs $G$ with $v(G)\le 4$, the following results were proved in \cite{LSWW24}.

\begin{theorem}[\cite{LSWW24}]\label{thm:small-SZl-other}
Let $\ell$ be an odd integer with $\ell \ge 3$.
\begin{enumerate}[label=(\arabic*)]
\setlength{\itemsep}{0em}
\item\label{thm:SZl-small-2vertices}
The graph $aK_2$ is in $\SZ_\ell$ if and only if it contains $\ell-1$ edge-disjoint spanning trees, that is, if and only if $a \ge \ell-1$.
\item\label{thm:SZl-small-3vertices}
The graph $T_{a,b,c}$ is in $\SZ_\ell$ if and only if it contains $\ell-1$ edge-disjoint spanning trees, that is,
$a+b+c \ge 2\ell-2$ and $\min\{a+b,a+c,b+c\} \ge \ell-1$.
\item\label{thm:SZl-small-4vertices}
Let $G$ be a graph with $v(G)=4$. If $e(G)\ge 3\ell-2$, $\mu(G)\le \ell-2$, and $\delta(G)\ge \ell-1$,
then $G \in \SZZ{\ell}$.
\end{enumerate}
\end{theorem}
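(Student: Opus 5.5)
The plan is to treat all three parts through one reformulation. In any orientation, the net out-flow across a bundle of $m$ parallel edges between $x$ and $y$ is $2k-m$, where $k$ is the number of edges directed $x\to y$. So the achievable net values form the arithmetic progression $\{-m,-m+2,\dots,m\}$ with common difference $2$. Since $\ell$ is odd, $2$ is invertible in $\Z_\ell$. Rescaling by $2^{-1}$, this set becomes a cyclic interval of $\Z_\ell$ of length $\min\{\ell,m+1\}$. A $\beta$-orientation is then exactly a choice of one value per bundle such that the signed sums at the vertices equal $\beta$.

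\textbf{Part (1).} For $aK_2$ with $\beta(u)=b=-\beta(v)$, we need $b$ to lie in a cyclic interval of length $\min\{\ell,a+1\}$. Every $b$ is covered if and only if $a+1\ge\ell$, which gives part (1) immediately.

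\textbf{Part (2), sufficiency.} Let the vertices be $v_1,v_2,v_3$, with bundles $v_1v_2$, $v_2v_3$, $v_3v_1$ of multiplicities $a,b,c$ and net values $x,y,z$. The conditions are $x-z=\beta(v_1)$ and $y-x=\beta(v_2)$; the third condition is automatic. Thus we need
\[
X\cap(Z+\beta(v_1))\cap(Y-\beta(v_2))\neq\emptyset,
\]
where $X,Y,Z$ are cyclic intervals of lengths $a+1,b+1,c+1$ (after rescaling), and the two shifts range over all of $\Z_\ell$. I will first check that two cyclic intervals of lengths $p,q$ always meet under every shift if and only if $p+q\ge\ell+1$. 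Then for three intervals, the intersection of two of them is itself an interval whose worst-case length is $p+q-\ell$. Applying the two-interval criterion again, the triple intersection is always nonempty if and only if
\[
(a+1)+(b+1)+(c+1)\ge 2\ell+1
\]
and each pair satisfies the two-interval bound. These are exactly $a+b+c\ge2\ell-2$ and $\min\{a+b,a+c,b+c\}\ge\ell-1$.

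\textbf{Part (2), necessity.} Necessity comes from the same computation: exhibit shifts that make the intervals disjoint. Alternatively, observe that a vertex of degree $d<\ell-1$ realizes at most $d+1<\ell$ residues. I will also note that these conditions coincide with having $\ell-1$ edge-disjoint spanning trees, by the Nash-Williams--Tutte criterion on the two partition types.

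\textbf{Part (3).} For four vertices, I would reduce to parts (1) and (2) using \Cref{prop:SZ5-property}. Take a vertex $v$ and lift some $2$-paths $xvy$ through $v$; lifting does not change any boundary value. The aim is a graph $G'$ in which $v$ is joined to a single neighbour $u$ by at least $\ell-1$ edges. Then $G'[u,v]\in\SZ_\ell$ by part (1), and by part (2) the contracted $3$-vertex graph $G'/G'[u,v]$ should also be in $\SZ_\ell$. Now \Cref{prop:SZ5-property}\ref{thm:SZl-contraction-reserve} and \ref{thm:SZl-lifting} give $G\in\SZ_\ell$.

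The main obstacle is the bookkeeping in this step. Lifting removes two edges at $v$ and adds one edge elsewhere, so it decreases $v$'s degree. Also, the hypothesis $\mu(G)\le\ell-2$ prevents any bundle from already having $\ell-1$ edges, so a single lift cannot suffice. I would therefore first try a direct version of the interval argument instead. Choose the spanning star at $v$ as a tree; the cycle space of the remaining three bundles among the other vertices is $2$-dimensional. Fix the three bundles at $v$ to satisfy $\beta(v)$, noting that $\deg(v)\ge\ell-1$ gives enough freedom. Then the remaining problem is a $T_{a',b',c'}$-type problem with boundary shifted by sums of intervals. Cauchy--Davenport bounds the size of these sumsets, and the condition $e(G)\ge3\ell-2$ should feed into the inequality $a'+b'+c'\ge2\ell-2$ in the effective sense. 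The delicate case is when the multiplicities are spread evenly, for example near $W_1$-like or $W_2$-like configurations. There I expect to need a short case analysis on the degree sequence, choosing $v$ to be a vertex of minimum degree.
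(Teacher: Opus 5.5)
The paper gives no proof of this statement; it quotes it from the literature, and settles its own $4$-vertex base case by computer. So your proposal has to stand on its own.

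\textbf{Parts (1) and (2).} These essentially stand. Multiplying by $2^{-1}$ turns a bundle of $m$ parallel edges into a cyclic interval of length $\min\{\ell,m+1\}$; this encoding is exact, and the two-interval criterion $p+q\ge\ell+1$ is correct. One step is false as stated: the intersection of two cyclic intervals need not be an interval. In $\mathbb{Z}_7$, $\{0,\dots,4\}\cap\{3,\dots,6,0\}=\{0\}\cup\{3,4\}$, whose longest arc has length $2<5+5-7$. This happens exactly when the two arcs together cover $\mathbb{Z}_\ell$. In that case a translate of $Y$ misses the intersection only if it fits into one of the two gaps, of lengths $\ell-r$ and $\ell-p$, and the pairwise conditions rule that out. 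With this case added, and lengths capped at $\ell$, your argument for (2) goes through.

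\textbf{Part (3): the genuine gap.} This is the only non-routine part, and you give no argument for it. Your lifting plan cannot work as written. Lifting $xvy$ deletes two edges at $v$ and never increases any $\mu(v,u)$; to enlarge the bundle $uv$ you must lift paths $uwv$ through the other vertices.

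More seriously, no argument that only lifts paths, deletes edges, and contracts pieces certified by (1) and (2) can prove (3). Each lift or deletion loses at least one edge. Contracting an $\mathrm{S}\mathbb{Z}_\ell$ piece on $j+1\in\{2,3\}$ vertices consumes at least $j(\ell-1)$ edges. So from $3\ell-2=3(\ell-1)+1$ edges you can afford to lose one edge in total, i.e.\ lift at most one $2$-path. A concrete obstruction: take $\ell=7$ and $\mu(v_1,v_2)=\mu(v_3,v_4)=4$, $\mu(v_1,v_3)=\mu(v_1,v_4)=\mu(v_2,v_4)=3$, $\mu(v_2,v_3)=2$. Then $e=19=3\ell-2$, $\mu=4\le\ell-2$ and $\delta=9$, so the hypotheses hold. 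Yet no bundle has $5$ edges, and every three vertices span at most $10$ edges. Hence a single lift produces neither $6K_2$ nor a $T_{a,b,c}$ with $a+b+c\ge 12$.

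Your fallback, fixing the three bundles at $v$ and then solving the remaining triangle, is an exact reformulation. But it is where all the work lies, and none of it is done. The leftover triangle has $e(G)-d(v)$ edges, which can be below $2\ell-2$ (it is $10<12$ in the example above). So (2) does not apply to it, and the freedom at $v$ must be used jointly with the triangle. Concretely, you would have to show that the set of contributions of the star at $v$ compatible with $\beta(v)$, which is a two-dimensional set, plus the set realized by the triangle, covers all of $\mathbb{Z}_\ell^2$. Cauchy--Davenport is a one-dimensional statement for prime moduli. It does not control such sumsets, and $\ell$ need not be prime here. The planned case analysis near $W_1$ and $W_2$ is also misdirected: those graphs have $12<3\cdot5-2$ edges, so the hypotheses exclude them.

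What is missing is an argument that treats all six bundles at once. One natural framework is Hakimi's theorem on orientations with prescribed net outdegree. It reduces the problem to choosing integers $b(v)\equiv\beta(v)\pmod{\ell}$ with $b(v)\equiv d(v)\pmod 2$, $\sum_v b(v)=0$, $|b(v)|\le d(v)$, and $|b(u)+b(v)|\le d(\{u,v\})$ for every pair. You would then need to check that $e(G)\ge3\ell-2$, $\mu(G)\le\ell-2$ and $\delta(G)\ge\ell-1$ allow such a choice for every boundary $\beta$.
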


We now prove that every planar $\S_5$-contractible graph is in $\SZ_5$ (by assuming the Structural Theorem).
\medskip
\newpage

\noindent
{\bf Main Theorem.} 
\emph{Every planar $\S_5$-contractible graph belongs to $\SZ_5$.}

\begin{proof}
We proceed by induction on $v(G)$.

If $v(G) \in \{2,3\}$, then $G$ is $\S_5$-contractible if and only if $G \in \SZ_5$, 
by the characterizations in \Cref{thm:small-S5-contractible} and \Cref{thm:small-SZl-other}.

If $v(G) = 4$, then computer-aided verification 
(see \href{https://github.com/SuBoll/Graph-Flow-Experiments/tree/main/odd-SZl-4v-identifier}{https://github.com/SuBoll/Graph-Flow-Experiments/tree/main/odd-SZl-4v-identifier}) 
shows that any graph $G$ on $4$ vertices with at least $12$ edges and containing $4$ edge-disjoint spanning trees 
belongs to $\SZ_5$, unless $G \in \{W_1, W_2\}$. By \Cref{thm:small-S5-contractible}, a $4$-vertex graph $G$ is $\S_5$-contractible 
if and only if it contains a spanning subgraph $G'$ satisfying $e(G') \ge 12$, 
$\delta(G') \ge 4$, $\mu(G') \le 4$, and $G' \notin \{W_1, W_2\}$. 
Therefore, each $4$-vertex graph is $\S_5$-contractible if and only if it belongs to $\SZ_5$. 
Hence, all $\S_5$-contractible graphs with at most $4$ vertices are contained in $\SZ_5$.

Now suppose $G$ is $\S_5$-contractible and planar, with $v(G)\ge 5$. By the Structural Theorem, either 
\begin{enumerate}[label=(\alph*)]
\setlength{\itemsep}{0em}
    \item $G$ has an $\S_5$-contractible proper subgraph $H$ where $G/H$ is also $\S_5$-contractible; or
    \item after lifting certain paths of $G$, the resulting graph contains an $\S_5$-contractible subgraph $H$, and contracting $H$ yields an $\S_5$-contractible planar graph $G'$.
\end{enumerate}
In case~(a), the induction hypothesis guarantees $H,G/H\in\SZ_5$. 
In case~(b), now $H,G'\in \SZ_5$. So in both cases it follows from \Cref{prop:SZ5-property}\ref{thm:SZl-contraction-reserve} and~\ref{thm:SZl-lifting} that $G \in \SZ_5$.

Thus, if $G$ is planar and $G$ is $\S_5$-contractible, then $G\in \SZ_5$.
\end{proof}

We remark that the $4$-vertex case can also be verified by hand without computer. Since the verification is lengthy and highly technical, a full characterization of $4$-vertex graphs in $\SZ_\ell$ is provided separately in a note (\href{https://arxiv.org/abs/2603.21591}{arXiv 2603.21591}), to which we refer interested readers for more details.

We now give a proof of \Cref{thm:5treesSZ5}.

\begin{proof}[Proof of \Cref*{thm:5treesSZ5}]
By the Nash-Williams--Tutte Theorem~\cite{N61,T61}, if a graph $G$ contains $5$ edge-disjoint spanning trees, then $\sum_{i=1}^t d(V_i) - 10t + 10 \ge 0$ for every vertex partition $\{V_1,\ldots,V_t\}$ of $G$, which implies that $\sum_{i=1}^t d(V_i) - 10t + 16 \ge 6$. Hence, $w(G)\ge 6\geq 0$. Since $w(H)\le 2$ for every graph $H\in \N_5$, we have $G/\P\notin \N_5$ for every partition $\P$ of $G$. Thus, by~\Cref{def:S5-contractible}, $G$ is $\S_5$-contractible, and the~\textbf{Main Theorem} yields $G\in S\Z_5$.
\end{proof}

Recall that every $2k$-edge-connected graph contains $k$ edge-disjoint spanning trees. Therefore, \Cref{thm:5treesSZ5} immediately implies the following result.

\medskip
\noindent
{\bf \Cref*{thm:10-planar-SZ5}}
\emph{Every $10$-edge-connected planar graph is strongly $\Z_5$-connected.}

\section{Proof of the Structural Theorem}\label{sec:proof of Thm}

For two partitions $\mathcal{P} = \{V_1, \dots, V_t\}$ and $\mathcal{P}' = \{V_1', \dots, V_s'\}$ of a graph $G$, we call $\mathcal{P}'$ a \emph{refinement} of $\mathcal{P}$, or say that $\mathcal{P}'$ \emph{refines} $\mathcal{P}$, if $s>t$ and for each $V_j' \in \mathcal{P}'$ there exists $V_i \in \mathcal{P}$ with $V_j' \subseteq V_i$. Informally, $\mathcal{P}'$ is formed by further subdividing some parts of $\mathcal{P}$. The following lemma will be instrumental in proving our gap lemmas.

\begin{lemma}[Refinement Lemma]\label{lemma:refine-2}
Let $G$ be a graph and let $\mathcal{P} := \{V_1, \dots, V_t\}$, where $\mathcal{P}$ is a partition of $G$. 
For each $i \in \{1,\dots,t\}$, let $H_i := G[V_i]$ and let $\mathcal{Q}_i$ be a partition of $H_i$. 
For each $\ell \in\{1,\ldots,t\}$, let
\[
\mathcal{P}_\ell := \mathcal{Q}_1 \cup \dots \cup \mathcal{Q}_\ell \cup (\mathcal{P} \setminus \{V_1, \dots, V_\ell\}).
\]
Then
\[
w_G(\mathcal{P}) = \sum_{i=1}^\ell \bigl(6 - w_{H_i}(\mathcal{Q}_i)\bigr) + w_G(\mathcal{P}_\ell).
\]
\end{lemma}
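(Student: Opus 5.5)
It suffices to prove the one-step identity $w_G(\mathcal{P}_{\ell-1}) = \bigl(6 - w_{H_\ell}(\mathcal{Q}_\ell)\bigr) + w_G(\mathcal{P}_\ell)$ for each $\ell$, where $\mathcal{P}_0 := \mathcal{P}$. Summing (telescoping) over $\ell' = 1,\dots,\ell$ then gives the stated formula. So the plan is to isolate what happens to the weight when a single part $V_\ell$ of $\mathcal{P}_{\ell-1}$ is replaced by the parts of $\mathcal{Q}_\ell$, with all other parts unchanged.

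Write $\mathcal{Q}_\ell = \{U_1,\dots,U_s\}$, so $V_\ell = \bigcup_j U_j$. In passing from $\mathcal{P}_{\ell-1}$ to $\mathcal{P}_\ell$, the number of parts grows by $s-1$, which contributes $-10(s-1)$ to the weight.

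For the degree sum, note that $\sum_{X\in\mathcal{P}} d_G(X)$ counts each $\mathcal{P}$-external edge exactly twice. The parts outside $V_\ell$ keep their degrees. In $G$ we have
\[
\sum_{j=1}^s d_G(U_j) = d_G(V_\ell) + 2\,|\{\text{edges of } G[V_\ell] \text{ between distinct } U_j\}| = d_G(V_\ell) + \sum_{j=1}^s d_{H_\ell}(U_j),
\]
since each edge leaving some $U_j$ either leaves $V_\ell$ or stays inside $H_\ell = G[V_\ell]$. (This is where $H_\ell$ must be the induced subgraph.) Hence
\[
w_G(\mathcal{P}_\ell) - w_G(\mathcal{P}_{\ell-1}) = \sum_j d_{H_\ell}(U_j) - 10(s-1) = w_{H_\ell}(\mathcal{Q}_\ell) - 16 + 10 = w_{H_\ell}(\mathcal{Q}_\ell) - 6,
\]
using $w_{H_\ell}(\mathcal{Q}_\ell) = \sum_j d_{H_\ell}(U_j) - 10s + 16$. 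Rearranging gives the one-step identity.

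The only point needing care is bookkeeping rather than an obstacle. At stage $\ell$ the part being refined is still the original $V_\ell$, because earlier steps only split $V_1,\dots,V_{\ell-1}$. One also has to allow the degenerate case where $\mathcal{Q}_i$ is the single-part partition $\{V_i\}$. That case gives $w_{H_i} = 16 - 10 = 6$, and the corresponding term vanishes, consistent with the formula; this is why the paper's convention excluding single-part partitions should be relaxed here. With the telescoping sum done, the lemma follows.
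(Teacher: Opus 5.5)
Your proof is correct and takes essentially the same approach as the paper. The paper first proves the one-step identity $w_G(\mathcal{P}) = (6 - w_{H_k}(\mathcal{Q}_k)) + w_G(\mathcal{Q}_k \cup (\mathcal{P}\setminus\{V_k\}))$ by the same degree and part-count bookkeeping, then inducts on $\ell$, which matches your telescoping; you also state the identity $\sum_j d_G(U_j) - d_G(V_\ell) = \sum_j d_{H_\ell}(U_j)$, and why it needs $H_\ell$ to be induced, which the paper uses without comment.
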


\begin{proof}
	For each $k \in \{1,\dots,t\}$, we denote $\mathcal{Q}_k$ by $\{W_1, \dots, W_{s_k}\}$. Now we have
\begin{align*}
w_G(\mathcal{Q}_k \cup (\mathcal{P} \setminus \{V_k\})) 
	&= \Bigl[\sum_{i=1}^{s_k} d_G(W_i) + \sum_{j=1}^t d_G(V_j) - d_G(V_k)\Bigr] - 10(s_k + t - 1) + 16 \\
	&= \Bigl[\sum_{i=1}^{s_k} d_G(W_i) - d_G(V_k) - 10s_k + 16\Bigr] + \Bigl[\sum_{j=1}^t d_G(V_j) - 10t + 16\Bigr] - 6 \\
&= w_{H_k}(\mathcal{Q}_k) + w_G(\mathcal{P}) - 6.
\end{align*}
This gives
\begin{align}
w_G(\mathcal{P}) = (6 - w_{H_k}(\mathcal{Q}_k)) + w_G(\mathcal{Q}_k \cup (\mathcal{P} \setminus \{V_k\})).
\label{key-clm}
\end{align}

We now prove the lemma by induction on $\ell$. The base case $\ell = 1$ is immediate:
\[
w_G(\mathcal{P}) = (6 - w_{H_1}(\mathcal{Q}_1)) + w_G(\mathcal{P}_1).
\]

Assume instead that the statement holds for some $\ell<t$, that is,
\begin{align}
w_G(\mathcal{P}) = \sum_{i=1}^\ell (6 - w_{H_i}(\mathcal{Q}_i)) + w_G(\mathcal{P}_\ell).
\label{refinement-IH}
\end{align}
Observe that
$
\mathcal{P}_{\ell+1} = \mathcal{Q}_{\ell+1} \cup (\mathcal{P}_\ell \setminus \{V_{\ell+1}\}).
$
Applying \eqref{key-clm} to $V_{\ell+1}$ yields
\begin{align}
w_G(\mathcal{P}_\ell) = (6 - w_{H_{\ell+1}}(\mathcal{Q}_{\ell+1})) + w_G(\mathcal{P}_{\ell+1}).
\label{refinement-extra}
\end{align}
	To finish the inductive step, we simply substitute \eqref{refinement-extra} into \eqref{refinement-IH}:
\begin{align*}
w_G(\mathcal{P}) = \sum_{i=1}^{\ell+1} (6 - w_{H_i}(\mathcal{Q}_i)) + w_G(\mathcal{P}_{\ell+1}).
\end{align*}
\aftermath
\end{proof}

We define the \emph{co-weight} of a graph $H$ to be $\sigma(H) := 6 - w(H)$. In the lemma, if each $w(H_i)$ is attained by $\mathcal{Q}_i$, i.e., $w(H_i) = w_{H_i}(\mathcal{Q}_i)$, then for all $\ell \in \{1,\dots,t\}$,
\[
w_G(\mathcal{P}) = \sum_{i=1}^\ell \sigma(H_i) + w_G(\mathcal{P}_\ell).
\]

In the rest of the paper, let $G$ be a counterexample to the Structural Theorem minimizing $v(G) + e(G)$; for short, we say a ``minimum $G$.'' Note the following. 

\begin{lemma}\label{lem:property}
Our minimum $G$ is an $\S_5$-contractible planar graph such that:
\begin{enumerate}[label=(\arabic*)]
\setlength{\itemsep}{0em}
    \item $v(G) \ge 5$;\label{lem:property-vertices} and
    \item $G$ contains no $\S_5$-contractible proper subgraph;\label{lem:property-subgraph} and
    \item $G$ admits no path-lifting that produces a planar graph $G'$ with an $\S_5$-contractible subgraph $H$ such that $G'/H$ is also $\S_5$-contractible\label{lem:property-lifting}.
\end{enumerate}
\end{lemma}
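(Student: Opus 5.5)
This lemma is just the negation of the Structural Theorem, read off for a minimum counterexample, so the plan is to unpack the definition of ``counterexample'' clause by clause. By choice, $G$ is an $\S_5$-contractible planar graph for which none of the three alternatives \ref{1-thm:F-w(G)-reduction}, \ref{2-thm:F-w(G)-reduction}, \ref{3-thm:F-w(G)-reduction} of the Structural Theorem holds. The hypotheses of the theorem, $\S_5$-contractibility and planarity, hold for $G$ by definition. Each conclusion then corresponds to the failure of one alternative.

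First, since alternative \ref{1-thm:F-w(G)-reduction} fails, $v(G)\ge 5$, which is item~\ref{lem:property-vertices}. Second, since alternative \ref{2-thm:F-w(G)-reduction} fails, $G$ has no $\S_5$-contractible proper subgraph, which is item~\ref{lem:property-subgraph}.

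Third, since alternative \ref{3-thm:F-w(G)-reduction} fails, no lifting of paths yields a planar $G'$ containing an $\S_5$-contractible subgraph $H$ with $G'/H$ $\S_5$-contractible, which is item~\ref{lem:property-lifting}. The one point to watch is that the phrase ``contracting $H$ yields an $\S_5$-contractible graph'' in the Structural Theorem means exactly $G'/H$ being $\S_5$-contractible, so the negation matches the wording of \ref{lem:property-lifting} verbatim.

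Minimality of $v(G)+e(G)$ plays no role in these three items. It is recorded only for later use, for instance to apply the theorem inductively to smaller graphs such as $G'/H$. There is no genuine obstacle; the only care needed is to state the negations precisely, so that later arguments can invoke \ref{lem:property-subgraph} for every proper subgraph and \ref{lem:property-lifting} for every planarity-preserving lifting.
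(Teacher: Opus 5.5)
Your proposal is correct and matches the paper, which states this lemma without proof as an immediate restatement of the failure of alternatives (1)--(3) of the Structural Theorem for a counterexample $G$. One small correction to your aside: the later arguments never invoke minimality of $v(G)+e(G)$ and never apply the theorem inductively to smaller graphs such as $G'/H$; they rely only on items (1)--(3), together with planarity and $\S_5$-contractibility of $G$.
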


Our proof strategy is to establish gap lemmas (Lemmas~\ref{lemma:partition-value} and \ref{lemma:gap-lemma}) 
and identify forbidden configurations of $G$. This ultimately leads to a contradiction via discharging.

\subsection{Basic Properties and Gap Lemmas}

Recall that $G$ contains no $\S_5$-contractible proper subgraph by~\Cref{lem:property}\ref{lem:property-subgraph}. So the following lemma is an immediate consequence of \Cref{prop:property-S5}\ref{1_prop:property-S5}. 

\begin{lemma}\label{lemma:subgraph_H_weight2}
Every proper subgraph $H$ of $G$, other than $K_1$, satisfies $w(H) \le 2$.
\end{lemma}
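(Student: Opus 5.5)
The lemma is essentially a contrapositive of \Cref{prop:property-S5}\ref{1_prop:property-S5}, applied to proper subgraphs of the minimum $G$. Fix a proper subgraph $H$ of $G$ with $H \neq K_1$. By \Cref{lem:property}\ref{lem:property-subgraph}, $H$ is not $\S_5$-contractible. We then want to invoke \Cref{prop:property-S5}\ref{1_prop:property-S5}, which says that a non-$\S_5$-contractible graph has weight at most $2$. That proposition, however, is stated for connected graphs with at least two vertices, so we need to check that every $H$ under consideration is covered, or else treat the remaining cases directly.

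First, suppose $H$ has at least two vertices but is disconnected. Take any partition $\mathcal{P}$ of $H$ that does not split any component, for instance the partition into the vertex sets of the components. If there are $c \ge 2$ components, every part has degree $0$, so
\[
w_H(\mathcal{P}) = -10c + 16 \le -4 < 0.
\]
This already gives $w(H) \le 2$. Equivalently, one can start from any partition and refine a part across two components; by observation~\ref{connected} in the preliminaries this lowers the weight by $10$.

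Second, suppose $H$ is connected with $v(H) \ge 2$. Then $H$ satisfies the hypotheses of \Cref{prop:property-S5}. Since it is not $\S_5$-contractible, either $w(H) < 0$, or $H/\mathcal{P} \in \mathcal{N}_5$ for some partition $\mathcal{P}$. In the second case,
\[
w(H) \le w_H(\mathcal{P}) = w(H/\mathcal{P}) \le 2
\]
by \Cref{ob:weight-K2-Tabc}\ref{w-N5}.

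Finally, there are the one-vertex subgraphs other than $K_1$. A loopless one-vertex graph is $K_1$, which is excluded by hypothesis, so no further case arises.

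No step here is a real obstacle. The only point needing care is checking that the hypotheses of \Cref{prop:property-S5} (connected, at least two vertices) cover all relevant $H$, and this is handled by the disconnected case above.
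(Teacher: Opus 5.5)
Your proof is correct and is the same argument as the paper's, which simply says the lemma is an immediate consequence of \Cref{lem:property}\ref{lem:property-subgraph} together with \Cref{prop:property-S5}\ref{1_prop:property-S5}. Your separate treatment of disconnected $H$, where the component partition gives weight $16-10c\le -4$, makes explicit a case the paper leaves implicit, since \Cref{prop:property-S5} is stated only for connected graphs.
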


\begin{lemma}\label{lemma:>=4+weight}
Let $G'$ be a subgraph of $G$, and let $\P := \{V_1, \dots, V_t\}$, such that $\mathcal{P}$ is a nontrivial 
partition of $G'$ with $|V_k| \ge 2$ for some $k$. If $\Q$ is a partition of $V_k$ attaining the weight of 
$G'[V_k]$, i.e., $w_{G'[V_k]}(\Q) = w(G'[V_k])$, then
\[
w_{G'}(\P) \ge w_{G'}(\Q \cup (\P \setminus \{V_k\})) + 4.
\]
\end{lemma}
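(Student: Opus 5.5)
The plan is to apply the Refinement Lemma (\Cref{lemma:refine-2}) with $\ell=1$ to the single part $V_k$, and then bound the co-weight of $G'[V_k]$ using \Cref{lemma:subgraph_H_weight2}. We may relabel so that $k=1$. Let $H_1:=G'[V_k]$, take $\mathcal{Q}_1:=\Q$, and for the other parts take the trivial partitions. Then \Cref{lemma:refine-2} with $\ell=1$ (in fact just identity \eqref{key-clm} from its proof, applied in the graph $G'$) gives
\[
w_{G'}(\P) = \bigl(6 - w_{H_1}(\Q)\bigr) + w_{G'}(\Q\cup(\P\setminus\{V_k\})) = \sigma(H_1) + w_{G'}(\Q\cup(\P\setminus\{V_k\})).
\]
So it suffices to show $\sigma(H_1)\ge 4$, that is, $w(G'[V_k])\le 2$.

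To get this bound I would argue as follows. Since $|V_k|\ge 2$, the graph $H_1$ is not $K_1$. Because $\P$ is nontrivial as a partition with at least two parts (single-part partitions are excluded by convention), $V_k\ne V(G')$. Hence $H_1=G'[V_k]$ is a subgraph of $G'\subseteq G$ that misses at least one vertex of $G$, so it is a \emph{proper} subgraph of $G$. \Cref{lemma:subgraph_H_weight2} then yields $w(H_1)\le 2$, and so $\sigma(H_1)=6-w(H_1)\ge 4$, which finishes the inequality.

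One subtlety to check is that $H_1$ may be disconnected. In that case $w(H_1)$ is still defined as a minimum over partitions, and \Cref{lemma:subgraph_H_weight2} is stated for every proper subgraph other than $K_1$, so no connectivity hypothesis is needed. Alternatively, \Cref{prop:property-S5}\ref{1_prop:property-S5} applies to connected graphs only. For a disconnected $H_1$, I would instead directly note, via observation~\ref{connected}, that splitting along components drops the weight by $10$, so a negative weight is attained and $w(H_1)\le 2$ again. The only real obstacle is confirming that the Refinement Lemma's identity holds verbatim with $G'$ in place of $G$; it does, since its proof uses nothing about $G$ beyond being a graph.
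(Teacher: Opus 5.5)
Your proposal is correct and matches the paper's proof. The paper also applies the Refinement Lemma (\Cref{lemma:refine-2}) to the single part $V_k$ and gets $\sigma(G'[V_k])\ge 4$ from \Cref{lemma:subgraph_H_weight2}, since $G'[V_k]$ is a proper subgraph of $G$ other than $K_1$. Your side remark on a possibly disconnected $G'[V_k]$, where a bipartition along components already has weight $-4$, covers a point the paper passes over; it does not change the argument.
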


\begin{proof}
Let $H := G'[V_k]$. Since $H$ is a proper subgraph of $G$, by~\Cref{lemma:subgraph_H_weight2} we have $w_H(\Q) = w(H) \le 2$.  
By applying~\Cref{lemma:refine-2}, we get
\[
w_{G'}(\P) = w_{G'}(\Q \cup (\P \setminus \{V_k\})) + (6 - w(H)) \ge w_{G'}(\Q \cup (\P \setminus \{V_k\})) + 4.
\]
\aftermath
\end{proof}

\Cref{lemma:>=4+weight} shows that any nontrivial partition of a subgraph of $G$ can be refined to reduce its weight by at least $4$. Thus, when we evaluate the weight of a subgraph, we need only to consider its trivial partition.

\begin{lemma}\label{lem:trivial}
Suppose $G'\subseteq G$. A partition $\P$ of $G'$ is trivial if and only if $w(G') = w_{G'}(\P)$.
\end{lemma}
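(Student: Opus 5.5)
We must show: for $G'\subseteq G$, a partition $\P$ of $G'$ is trivial if and only if $w(G')=w_{G'}(\P)$. Throughout, $G'$ should have at least two vertices, since the single-part partition is excluded except for $K_1$.

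The plan is to derive both directions from \Cref{lemma:>=4+weight}, using strict descent for the reverse direction.

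\emph{Only if.} Suppose $\P$ is trivial. Let $\P^*$ attain $w(G')$, and choose it so that $w_{G'}(\P^*)=w(G')$ with the maximum number of parts. We show $\P^*$ is trivial. If not, some part $V_k$ of $\P^*$ has $|V_k|\ge 2$. Take $\Q$ to be a partition of $G'[V_k]$ attaining $w(G'[V_k])$. Note $\Q$ has at least two parts, since single-part partitions are only allowed for $K_1$. Then \Cref{lemma:>=4+weight} gives
\[
w_{G'}(\Q\cup(\P^*\setminus\{V_k\}))\le w_{G'}(\P^*)-4<w(G'),
\]
which contradicts the minimality of $w(G')$. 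Hence every minimizer is trivial. Since the trivial partition is unique, $w_{G'}(\P)=w(G')$.

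\emph{If.} Suppose $\P$ is nontrivial. The same application of \Cref{lemma:>=4+weight} to a part of size at least $2$ yields another partition of $G'$ with weight at most $w_{G'}(\P)-4$. Therefore $w_{G'}(\P)\ge w(G')+4>w(G')$.

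Two points in the lemma's hypotheses need checking.
\begin{itemize}
\item \Cref{lemma:>=4+weight} invokes \Cref{lemma:subgraph_H_weight2} for $H=G'[V_k]$. This requires $H$ to be a proper subgraph of $G$ other than $K_1$. Since $\P^*$ is not the single-part partition, $V_k\ne V(G')$, so $H$ is a proper subgraph of $G$. Since $|V_k|\ge2$, $H\neq K_1$.
\item If $G'[V_k]$ is disconnected, $w(G'[V_k])$ may be very negative. This only strengthens the inequality $6-w(H)\ge 4$, so it is harmless.
\end{itemize}

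I therefore expect no real obstacle. The only care needed is the convention excluding single-part partitions: it guarantees that $\Q$ genuinely refines $V_k$, and that $V_k$ is a proper subset of $V(G')$, so \Cref{lemma:subgraph_H_weight2} applies.
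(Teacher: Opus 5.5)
Your proof is correct and is essentially the paper's argument: both apply \Cref{lemma:>=4+weight} to a part of size at least $2$ to get a refinement of weight at least $4$ smaller, so no nontrivial partition can attain $w(G')$. You also spell out the converse, which the paper leaves implicit: a minimizer exists, every minimizer is trivial, and the trivial partition is unique; your hypothesis checks (properness of $G'[V_k]$, possibly disconnected parts, and the convention excluding single-part partitions) are sound.
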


\begin{proof}
If $v(G') \le 2$, the only possible partition is trivial. Otherwise, assume $v(G') \ge 3$ and that a partition $\P$ of $G'$ satisfies $w_{G'}(\P) = w(G')$ but is not trivial.  
By \Cref{lemma:>=4+weight}, there exists a refinement $\P'$ of $\P$ such that
\[
w_{G'}(\P') \le w_{G'}(\P) - 4 = w(G') - 4.
\]
This contradicts the definition of $w(G')$.
\end{proof}

\Cref{lemma:subgraph_H_weight2} shows that every proper subgraph of $G$ has weight at most $2$.
We further refine this observation by characterizing all proper subgraphs with weight even smaller.
Recall from~\Cref{ob:weight-K2-Tabc}\ref{w-N5} that, 
$w(3K_2)=2$ and $w(H)=0$ for all $H\in\mathcal{N}_5\setminus\{3K_2\}$.

\begin{lemma}\label{lemma:is-N5}
Every proper subgraph $H$ of $G$ with $H \notin \mathcal{N}_5 \cup \{K_1\}$ satisfies $w(H) \le -2$.
\end{lemma}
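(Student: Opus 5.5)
The plan is to argue by contradiction, using only three facts established earlier: the upper bound of \Cref{lemma:subgraph_H_weight2}, the parity of the weight, and the refinement gain of \Cref{lemma:>=4+weight}. Let $H$ be a proper subgraph of $G$ with $H\notin \mathcal{N}_5\cup\{K_1\}$, so $v(H)\ge 2$. By \Cref{lemma:subgraph_H_weight2}, $w(H)\le 2$. Every value $w_H(\mathcal{P})$ is even, so $w(H)$ is even. Hence it suffices to rule out $w(H)\ge 0$; assume this for contradiction.

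First I dispose of the case that $H$ is disconnected. Take a set $X$ that is a union of some, but not all, components of $H$, and consider the two-part partition $\{X,X^c\}$. Since $d(X)=0$,
\[
w_H(\{X,X^c\}) = 0-20+16=-4,
\]
so $w(H)\le -4$, contradicting $w(H)\ge 0$. So $H$ is connected with $v(H)\ge 2$.

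Now $H$ is a proper subgraph of $G$, and by \Cref{lem:property}\ref{lem:property-subgraph} it is not $\S_5$-contractible. Since $w(H)\ge 0$, \Cref{def:S5-contractible} gives a partition $\mathcal{P}$ of $H$ with $H/\mathcal{P}\in\mathcal{N}_5$. By observation~\ref{connected} in the preliminaries, we may take every part of $\mathcal{P}$ to induce a connected subgraph. Then $w_H(\mathcal{P})$ equals the trivial-partition weight of $H/\mathcal{P}$, namely $2e(H/\mathcal{P})-10v(H/\mathcal{P})+16$. Each graph in $\mathcal{N}_5$ has this quantity at most $2$: it is $2$ for $3K_2$ and $0$ for the others, by \Cref{ob:weight-K2-Tabc}. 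So $w_H(\mathcal{P})\le 2$.

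The key step is to show that $\mathcal{P}$ must be trivial. Suppose instead that some part $V_k$ has $|V_k|\ge 2$. Apply \Cref{lemma:>=4+weight} with $G'=H$, refining $V_k$ by a partition attaining $w(H[V_k])$. This yields a partition of $H$ of weight at most $w_H(\mathcal{P})-4\le -2$, so $w(H)\le -2$, again a contradiction. (Equivalently, \Cref{lem:trivial} says a nontrivial partition cannot be within $4$ of the minimum here.) Hence $\mathcal{P}$ is trivial, so $H=H/\mathcal{P}\in\mathcal{N}_5$, contradicting the choice of $H$. Therefore $w(H)\le -2$.

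The only delicate point I anticipate is making the reduction to connected parts legitimate, since \Cref{lemma:>=4+weight} must be applied to a genuine nontrivial partition. This is exactly what observation~\ref{connected} provides. Note also that even without connected parts, any nontrivial $\mathcal{P}$ has a part of size at least $2$, so \Cref{lemma:>=4+weight} applies directly; thus this obstacle is mild.
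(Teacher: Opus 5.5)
Your proof is correct and is essentially the paper's argument. You assume $w(H)\ge 0$ and use that $H$ is not $\S_5$-contractible to get a partition $\mathcal{P}$ with $H/\mathcal{P}\in\mathcal{N}_5$, so $w_H(\mathcal{P})\le 2$; then \Cref{lemma:>=4+weight} forces $\mathcal{P}$ to be trivial, giving $H\in\mathcal{N}_5$. Your explicit disconnected case ($w(H)\le -4$) and the parity step ($w(H)<0$ even implies $w(H)\le -2$) are small additions. They make rigorous two points the paper leaves implicit, notably that $\S_5$-contractibility is only defined for connected graphs.
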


\begin{proof}
Suppose instead that some proper subgraph $H$ of $G$ with
$H \notin \mathcal{N}_5 \cup \{K_1\}$ has $w(H) \ge 0$.
Recall that $G$ has no $\S_5$-contractible proper subgraph by~\Cref{lem:property}\ref{lem:property-subgraph}. So there exists a partition $\Q$ of $H$ such that $H/\Q \in \mathcal{N}_5$.
By~\Cref{ob:weight-K2-Tabc}\ref{w-N5}, this implies that
\(
w_H(\Q) \le 2.
\)

If $\Q$ is nontrivial, then by~\Cref{lemma:>=4+weight} some refinement $\Q'$ of $\Q$ satisfies 
\(
w_H(\Q') \le w_H(\Q) - 4 \le -2.
\)
This contradicts our assumption that $w(H) \ge 0$.
Thus, $\Q$ must be the trivial partition.
But now $H = H/\Q \in \mathcal{N}_5$, contradicting our choice of $H$.
\end{proof}

Combining \Cref{lemma:subgraph_H_weight2} and \Cref{lemma:is-N5}, we obtain the following corollary.

\begin{corollary}\label{cor:w(G)=0}
If $H$ is a proper subgraph of $G$, then the following statements hold.
\begin{enumerate}[label=(\arabic*)]
    \setlength{\itemsep}{0em}
    \item $w(H)=6$ if and only if $H=K_1$.
    \item $w(H)=2$ if and only if $H=3K_2$.
    \item $w(H)=0$ if and only if $H\in \mathcal{N}_5\setminus \{3K_2\}$.
\end{enumerate}
Moreover, $w(G)=0$, and hence $e(G)=5v(G)-8$.
\end{corollary}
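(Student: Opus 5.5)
The corollary follows by sorting proper subgraphs by the parity of their weight, then arguing separately for $G$ itself.

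\emph{Proper subgraphs.} Let $H$ be a proper subgraph of $G$. Every weight $w_H(\mathcal{P})$ is even, so $w(H)$ is even.

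If $H=K_1$, the only partition is the single-part one, giving $w(H)=d(V)-10+16=6$. Conversely, if $H\neq K_1$, \Cref{lemma:subgraph_H_weight2} gives $w(H)\le 2$. This proves (1).

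For $H\neq K_1$, there are two cases.
\begin{itemize}
\item If $H\notin\mathcal{N}_5$, then \Cref{lemma:is-N5} gives $w(H)\le -2$, so $w(H)\notin\{0,2\}$.
\item If $H\in\mathcal{N}_5$, then \Cref{ob:weight-K2-Tabc}\ref{w-N5} gives the exact values: $w(3K_2)=2$, and $w(H)=0$ for every other member of $\mathcal{N}_5$.
\end{itemize}
Together these give (2) and (3).

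\emph{The graph $G$ itself.} Since $G$ is $\S_5$-contractible, $w(G)\ge 0$, and $w(G)$ is even. I would show $w(G)\le 0$ by exhibiting a partition of weight $0$, taking the trivial partition $\mathcal{P}_0$ of a suitable subgraph.

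Pick any edge $e$ and set $G_1:=G-e$. This is a proper subgraph, and it is not $K_1$ since $v(G)\ge5$. Hence $w(G_1)\le 2$.

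Now use \Cref{lem:trivial}: it applies to $G_1\subseteq G$, so the minimum $w(G_1)$ is attained exactly by the trivial partition. Therefore
\[
2e(G_1)-10v(G)+16=w(G_1)\le 2 .
\]
Since $e(G)=e(G_1)+1$,
\[
w_G(\mathcal{P}_0)=2e(G)-10v(G)+16=w(G_1)+2\le 4 .
\]
This only gives $w(G)\le 4$, so one more argument is needed.

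\emph{Sharpening to $w(G)=0$.} Both candidates $w(G_1)=2$ and $w(G_1)=0$ must be excluded, using part (2) and part (3).
\begin{itemize}
\item If $w(G_1)=2$, then part (2) forces $G_1=3K_2$. This contradicts $v(G)\ge5$.
\item If $w(G_1)=0$, then part (3) forces $G_1\in\mathcal{N}_5$. Every member of $\mathcal{N}_5$ has at most $4$ vertices, again contradicting $v(G)\ge5$.
\end{itemize}
So $w(G_1)\le -2$, giving $w_G(\mathcal{P}_0)\le 0$. Combined with $w(G)\ge 0$, this yields $w(G)=0$.

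Finally, $0\le w(G)\le w_G(\mathcal{P}_0)\le 0$ forces $w_G(\mathcal{P}_0)=0$. Thus $2e(G)-10v(G)+16=0$, that is, $e(G)=5v(G)-8$.

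The only delicate step is confirming that \Cref{lem:trivial} applies to $G_1=G-e$, so that its weight is computed by the trivial partition. The lemma is stated for every subgraph of $G$, so this holds.
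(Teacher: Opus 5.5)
Your proof is correct and follows essentially the same route as the paper. Parts (1)--(3) come from combining \Cref{lemma:subgraph_H_weight2}, \Cref{lemma:is-N5} and \Cref{ob:weight-K2-Tabc}. The moreover part comes from deleting one edge $e$ and noting that $G-e$ cannot lie in $\mathcal{N}_5\cup\{K_1\}$, because $v(G)\ge 5$.

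The only difference is cosmetic. The paper argues by contradiction from $w(G)\ge 2$ using $w(G-e)\ge w(G)-2$, whereas you compute $w(G-e)$ exactly through its trivial partition (\Cref{lem:trivial}). Your version has the small bonus of producing $e(G)=5v(G)-8$ directly.
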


\begin{proof}
We only prove the moreover part.
Suppose instead that $w(G)\ge 2$.
Let $e$ be an edge of $G$, and let $G':=G-e$.
Then
\(
w(G') \ge w(G)-2 \ge 0.
\)
Since $G'$ is a connected proper subgraph of $G$, \Cref{lemma:is-N5} implies that
$G' \in \mathcal{N}_5 \cup \{K_1\}$.
In particular, this yields $v(G)=v(G') \le 4$,
which contradicts~\Cref{lem:property}\ref{lem:property-vertices}.
\end{proof}

We now state an initial gap lemma, which will be strengthened later in~\Cref{lemma:gap-lemma}.

\begin{lemma}[Initial Gap Lemma]
\label{lemma:partition-value} 
Let $H$ be a subgraph of $G$, and let $\P := \{V_1, \dots, V_t\}$, where $\mathcal{P}$ is a partition of 
$H$. For each $i\in\{1,\ldots,t\}$, let $H_i := H[V_i]$ and let $\sigma_i := 6 - w(H_i)$. 
The following $3$ statements hold.
\begin{enumerate}[label=(\arabic*)]
\setlength{\itemsep}{0em}
    \item\label{C1-trivial}
    $ w(H_i) = 2e(H_i) - 10|V_i| + 16 $.
    \item\label{C2-increasing}
    \[
    \sigma_i =
        \begin{cases}
            0, & \text{if } H_i = K_1;\\
            4, & \text{if } H_i = 3K_2;\\
            6, & \text{if } H_i \in \mathcal{N}_5 \setminus \{3K_2\}.
        \end{cases}
    \]
    Otherwise, $\sigma_i \ge 8$.
    \item\label{C3-sum}
    $ w_H(\P) = \sum\limits_{i=1}^t \sigma_i + w(H) $. In particular, if $ H = G $, then
    $ w_G(\P) = \sum\limits_{i=1}^t \sigma_i $.
\end{enumerate}
\end{lemma}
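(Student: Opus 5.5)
The plan is to derive all three statements from \Cref{lem:trivial}, \Cref{cor:w(G)=0}, \Cref{lemma:is-N5} and the Refinement Lemma (\Cref{lemma:refine-2}). The one point to settle first is that each $H_i$ is a proper subgraph of $G$, since the earlier lemmas only control the weight of proper subgraphs. If $H\neq G$, then $H_i\subseteq H\subsetneq G$. If $H=G$, the convention that single-part partitions are excluded (as $v(G)\ge 5$) forces $t\ge 2$, so every $V_i$ is a proper subset of $V(G)$. In both cases every $H_i$ is a proper subgraph of $G$.

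For \ref{C1-trivial}, apply \Cref{lem:trivial} to $G'=H_i\subseteq G$: the trivial partition of $H_i$ attains $w(H_i)$. Its weight is $\sum_{v\in V_i} d_{H_i}(v)-10|V_i|+16=2e(H_i)-10|V_i|+16$. The case $H_i=K_1$ is consistent, giving weight $6$.

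For \ref{C2-increasing}, use that $H_i$ is a proper subgraph of $G$. By \Cref{cor:w(G)=0}, $w(H_i)=6$ if and only if $H_i=K_1$. Likewise $w(H_i)=2$ if and only if $H_i=3K_2$, and $w(H_i)=0$ if and only if $H_i\in\mathcal{N}_5\setminus\{3K_2\}$. These give $\sigma_i=0,4,6$ respectively. In every remaining case, \Cref{lemma:is-N5} gives $w(H_i)\le -2$. Since weights are even, this yields $\sigma_i=6-w(H_i)\ge 8$.

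For \ref{C3-sum}, apply \Cref{lemma:refine-2} to the partition $\P$ of $H$ with $\ell=t$. For each $i$, take $\Q_i$ to be the trivial partition of $H_i$; by \Cref{lem:trivial} it attains $w(H_i)$, so $6-w_{H_i}(\Q_i)=\sigma_i$. Then $\P_t=\Q_1\cup\dots\cup\Q_t$ is the trivial partition of $H$. By \Cref{lem:trivial} again, now with $G'=H$, we have $w_H(\P_t)=w(H)$. This gives $w_H(\P)=\sum_i\sigma_i+w(H)$. When $H=G$, this application of \Cref{lem:trivial} is legitimate because the lemma allows $G'=G$. Combined with $w(G)=0$ from \Cref{cor:w(G)=0}, it gives $w_G(\P)=\sum_i\sigma_i$.

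I expect the only delicate step to be checking that the hypotheses of the cited lemmas are met in this generality. The main concern is possibly disconnected $H_i$ and $H=G$. I would handle it by noting that \Cref{lemma:subgraph_H_weight2} and \Cref{lemma:is-N5} are stated for arbitrary proper subgraphs. For a disconnected subgraph, splitting it into its components only lowers the weight further, by remark \ref{connected}.
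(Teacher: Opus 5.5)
Your proposal is correct and follows essentially the paper's proof: (1) comes from \Cref{lem:trivial}, (2) from \Cref{cor:w(G)=0} (equivalently \Cref{lemma:is-N5} for the remaining case), and (3) from the Refinement Lemma with each $\Q_i$ trivial, together with $w(G)=0$. Your explicit check that each $H_i$ is a proper subgraph of $G$ (using $t\ge 2$ when $H=G$) is a detail the paper leaves implicit, and the appeal to evenness in (2) is unnecessary because $w(H_i)\le -2$ already gives $\sigma_i\ge 8$.
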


\begin{proof}
\noindent
(1) By \Cref{lem:trivial}, the trivial partition of $H_i$ attains the weight of $H_i$, and hence
\(
w(H_i)=w_{H_i}(\P_0)=2e(H_i)-10|V_i|+16.
\)

\noindent
(2)
The stated values of $\sigma_i$ in the first $3$ cases follow directly from
\Cref{cor:w(G)=0}.
In all remaining cases, \Cref{cor:w(G)=0} yields $w(H_i)\le -2$, and therefore
\(
\sigma_i = 6 - w(H_i) \ge 8.
\)

\noindent
(3)
For each $i$, let $\Q_i$ be a partition of $H_i$ satisfying $w_{H_i}(\Q_i)=w(H_i)$.
By \Cref{lem:trivial}, each $\Q_i$ is trivial, so $\Q:=\Q_1\cup\cdots\cup\Q_t$ is the trivial
partition of $H$ and $w_H(\Q)=w(H)$.
Applying \Cref{lemma:refine-2}, we obtain
\[
w_H(\P)=\sum_{i=1}^t \sigma_i + w(H).
\]
If $H=G$, then $w(H)=0$ by \Cref{cor:w(G)=0}, and hence
$w_G(\P)=w_H(\P)=\sum_{i=1}^t \sigma_i$.
\end{proof}

Before improving this gap lemma, we introduce several special forbidden subgraphs. 
But first we consider the case where our minimum $G$ satisfies $v(G)=5$. We show that $G$ cannot be obtained by adding a single vertex to either $W_1$ or $W_2$.

\begin{lemma}\label{lemma:9-5_vertex}
For each $v\in V(G)$, we have $G-v\notin\{W_1,W_2\}$.
\end{lemma}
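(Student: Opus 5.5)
Suppose, for a contradiction, that $G-v=W_j$ for some $v\in V(G)$ and $j\in\{1,2\}$. Then $v(G)=5$, and \Cref{cor:w(G)=0} gives $e(G)=5\cdot 5-8=17$. Since $e(W_j)=12$, we get $d(v)=5$. \Cref{prop:property-S5}\ref{2_prop:property-S5} makes $G$ $4$-edge-connected, so $v$ sends its $5$ edges to the four vertices of $W_j$ in some distribution. Planarity of $G$ also constrains where $v$ can sit: $v$ lies in one face of the embedding of $W_j$, so its neighbours all lie on that face's boundary. The plan is to rule out every such attachment using \Cref{lem:property}\ref{lem:property-lifting}, that is, by exhibiting a path-lifting at $v$ after which a reducible configuration appears.

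The natural move is to lift a $2$-path $xvy$ through $v$, where $x,y$ are neighbours of $v$ on the common face. This keeps the graph planar and reduces $d(v)$ to $3$, while adding an edge $xy$ to $W_j$. The resulting $W_j+xy$ is a $4$-vertex graph with $13$ edges. The aim is to check, via \Cref{thm:small-S5-contractible}\ref{3_thm:small-S5-contractible}, that $W_j+xy$ is $\S_5$-contractible: it needs a spanning subgraph with at least $12$ edges, minimum degree at least $4$, $\mu\le 4$, and not in $\{W_1,W_2\}$.

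In $W_1$ the central vertex has multiplicity $3$ to each outer vertex, and the outer triangle has single edges. Choosing $xy$ to be an outer pair and deleting a different edge gives a $12$-edge graph outside $\{W_1,W_2\}$ with $\delta\ge 4$. For $W_2$ one argues similarly, taking care that $\mu$ stays at most $4$. Contracting $H:=W_j+xy$ then yields a graph on two vertices, $v_H$ and $v$, joined by the remaining $3$ edges at $v$. That is $3K_2$, which is not $\S_5$-contractible, so a single lift is not enough.

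Instead I would lift two $2$-paths at $v$ (possible since $d(v)=5$), leaving $v$ with degree $1$, or add the lifted edges differently. The cleaner plan is to take $H$ to be $W_j$ plus the lifted edges and apply \Cref{lem:property}\ref{lem:property-subgraph} directly to a proper subgraph. Concretely: lift $x_1vx_2$, which turns $G$ into $G'$ with $v$ of degree $3$. In $G'$, the subgraph $H'$ formed by $v$ together with two of its neighbours and all of $W_j$ minus one suitable vertex might be checked via \Cref{thm:small-S5-contractible}.

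The main obstacle is this case analysis over the ways $v$'s five edges can distribute to the four vertices of $W_j$ along one face, and in each case finding a lifting (or a direct $\S_5$-contractible proper subgraph, e.g.\ $T_{a,b,c}$ with $a+b+c\ge 8$) that contradicts \Cref{lem:property}. I would first try to find a proper $\S_5$-contractible subgraph directly. For instance, if $v$ has $\ge 2$ edges to a vertex $u$ with $\mu(u,z)=3$ in $W_j$, then $G[v,u,z]$ together with its edges may give a $T_{a,b,c}$ with $a+b+c\ge 8$ and $\delta\ge 4$, contradicting \Cref{lem:property}\ref{lem:property-subgraph}. I would fall back on lifting only in the remaining, more evenly spread distributions.
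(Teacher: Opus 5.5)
\textbf{There is a genuine gap.} Your setup ($v(G)=5$, $e(G)=17$, $d_G(v)=5$) is correct. However, the proposal never closes the general case, and the tool you fall back on cannot close it.

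Any lift in which $v$ is an internal vertex of a lifted path leaves $d(v)\le 3$ in the lifted graph $G'$. This has two consequences:
\begin{itemize}
\item $v$ cannot lie in an $\S_5$-contractible subgraph $H\subseteq G'$, since such an $H$ is $4$-edge-connected by \Cref{prop:property-S5}\ref{2_prop:property-S5}.
\item If $v\notin V(H)$, then $v$ is a vertex of degree at most $3$ in $G'/H$, so $G'/H$ is not $\S_5$-contractible either.
\end{itemize}
So lifting one or two $2$-paths through $v$ fails for every distribution of $v$'s edges, not only in the instance where you found $3K_2$.

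Your ``cleaner plan'' also does not work as stated. After lifting you are looking at subgraphs of $G'$, not of $G$, so \Cref{lem:property}\ref{lem:property-subgraph} does not apply. Your direct-subgraph idea only handles the case where all five edges at $v$ go to two vertices $u,z$ with $\mu(u,z)=3$. In general $v$ has two or three neighbours spread over $W_j$, and nothing in the proposal treats that case.

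\textbf{The missing idea} is to lift a path inside $W_j$, through a neighbour of $v$, so that $v$ keeps all five of its edges.

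First, $\mu(G)\le 3$, because $4K_2$ is $\S_5$-contractible. Hence $v$ has at least two neighbours in $W_j$, and so $v$ is adjacent to some $5$-vertex $w_2$ of $W_j$:
\begin{itemize}
\item In $W_1$ this holds because only one vertex is not a $5$-vertex.
\item In $W_2$, if $v$ saw only the two $7$-vertices $w_3,w_4$, then $G[v,w_3,w_4]$ would be an $8$-edge triangle with $\delta\ge 4$. That is exactly your direct case.
\end{itemize}

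Since $d_{W_j}(w_2)=5$, the other three vertices span $7$ edges. Lift a $2$-path $w_1w_2w_3$ with $w_1w_3\in E(W_j)$, so planarity is kept. Choose it so that $G[w_1,w_3,w_4]$ becomes $T_{2,3,3}$: in $W_1$ take $w_1,w_3$ to be the other two outer vertices; in $W_2$ take $w_1$ to be the other $5$-vertex and $w_3$ a $7$-vertex.

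Contract this $\S_5$-contractible triangle to a vertex $w$. The result is a $3$-vertex graph on $\{w,w_2,v\}$ with $8$ edges, where
\begin{itemize}
\item $\mu(w,w_2)=5-2=3$,
\item $1\le\mu(v,w_2)\le 3$,
\item $\mu(v,w)=5-\mu(v,w_2)\ge 2$.
\end{itemize}
So its minimum degree is at least $4$. By \Cref{thm:small-S5-contractible}\ref{2_thm:small-S5-contractible} it is $\S_5$-contractible, contradicting \Cref{lem:property}\ref{lem:property-lifting}.

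The point is that the degree the lift takes from $w_2$ is paid back by $w_2$'s edges to $v$. Lifting through $v$ itself can never do this.
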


\begin{proof}
Assume instead that there exists $v\in V(G)$ such that $G-v\in\{W_1,W_2\}$. Let $W:=V(G)\setminus\{v\}=\{w_1,w_2,w_3,w_4\}$ and let $G_1:=G[W]$. Since $v(G)=5$, by \Cref{cor:w(G)=0} we have $e(G)=17$. Since $e(W_1)=e(W_2)=12$, we have $d_G(v)=5$. 

We label the vertices as follows: if $G_1$ is isomorphic to $W_1$, then we use the labeling shown in \Cref{fig:proof-E1}; if $G_1$ is isomorphic to $W_2$, then we use the labeling shown in \Cref{fig:proof-E2}.

\begin{figure}[htbp!]
\centering
\begin{subfigure}{0.48\textwidth}
\centering
\begin{tikzpicture}[scale=0.6]
    \draw [line width=0.5pt] (0,3) to (-1.73,0);
    \draw [line width=0.5pt] (0,3) to (1.73,0);
    \draw [line width=0.5pt] (-1.73,0) to (1.73,0);

    \draw [line width=0.5pt] (0,1.14) to (1.73,0);
    \draw [bend left=14,line width=0.5pt] (0,1.14) to (1.73,0);
    \draw [bend right=14,line width=0.5pt] (0,1.14) to (1.73,0);

    \draw [line width=0.5pt] (0,1.14) to (-1.73,0);
    \draw [bend left=14,line width=0.5pt] (0,1.14) to (-1.73,0);
    \draw [bend right=14,line width=0.5pt] (0,1.14) to (-1.73,0);

    \draw [line width=0.5pt] (0,1.14) to (0,3);
    \draw [bend left=14,line width=0.5pt] (0,1.14) to (0,3);
    \draw [bend right=14,line width=0.5pt] (0,1.14) to (0,3);

    \draw [fill=white,line width=0.5pt] (0,3) node[above] {$w_3$} circle (4pt);
    \draw [fill=white,line width=0.5pt] (-1.73,0) node[left] {$w_1$} circle (4pt);
    \draw [fill=white,line width=0.5pt] (1.73,0) node[right] {$w_2$} circle (4pt);
    \draw [fill=white,line width=0.5pt] (0,1.14) node[below=2mm] {$w_4$} circle (4pt);
\end{tikzpicture}
\caption{$G_1= W_1$}
\label{fig:proof-E1}
\end{subfigure}
\hfill
\begin{subfigure}{0.48\textwidth}
\centering
\begin{tikzpicture}[scale=0.6]
    \draw [bend right=8,line width=0.5pt] (0,3) to (-1.73,0);
    \draw [bend left=8,line width=0.5pt] (0,3) to (-1.73,0);

    \draw [bend right=8,line width=0.5pt] (0,3) to (1.73,0);
    \draw [bend left=8,line width=0.5pt] (0,3) to (1.73,0);

    \draw [bend right=10,line width=0.5pt] (0,1.14) to (1.73,0);
    \draw [bend left=10,line width=0.5pt] (0,1.14) to (1.73,0);

    \draw [bend right=10,line width=0.5pt] (0,1.14) to (-1.73,0);
    \draw [bend left=10,line width=0.5pt] (0,1.14) to (-1.73,0);

    \draw [line width=0.5pt] (0,3) to (0,1.14);

    \draw [bend left=10,line width=0.5pt] (-1.73,0) to (1.73,0);
    \draw [bend right=10,line width=0.5pt] (-1.73,0) to (1.73,0);
    \draw [line width=0.5pt] (-1.73,0) to (1.73,0);

    \draw [fill=white,line width=0.5pt] (0,3) node[above] {$w_1$} circle (4pt);
    \draw [fill=white,line width=0.5pt] (-1.73,0) node[left] {$w_3$} circle (4pt);
    \draw [fill=white,line width=0.5pt] (1.73,0) node[right] {$w_4$} circle (4pt);
    \draw [fill=white,line width=0.5pt] (0,1.14) node[below=1mm] {$w_2$} circle (4pt);
\end{tikzpicture}
\caption{$G_1= W_2$}
\label{fig:proof-E2}
\end{subfigure}
	\caption{$G_1\in\{W_1, W_2\}$.}
\label{fig: W1W2proof}
\end{figure}
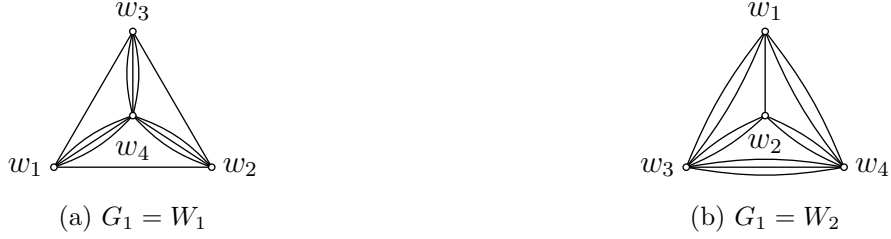

Note that $|N(v)\cap W|\le 3$; if not, then $G$ contains a copy of $K_5$, contradicting planarity. But since $4K_2$ is $\S_5$-contractible, we have $\mu(G)\le 3$. Thus, $|N(v)\cap W|\ge 2$. Next we show that in both cases (following the labels of \Cref{fig:proof-E1,fig:proof-E2}), without loss of generality, $d_G(w_2)\ge 6$ and $1\le \mu_G(v,w_2)\le 3$. 

Suppose first that $G_1= W_1$. As shown in \Cref{fig:proof-E1}, we have that $d_{G_1}(w_4)=9$ and $d_{G_1}(w_1)=d_{G_1}(w_2)=d_{G_1}(w_3)=5$. Since $v$ must have a neighbor in $\{w_1,w_2,w_3\}$, by symmetry we may assume that $vw_2\in E(G)$. Then $d_G(w_2)\ge 6$ and $1\le \mu_G(v,w_2)\le 3$.

Suppose instead that $G_1=W_2$. As shown in \Cref{fig:proof-E2}, we have that $d_{G_1}(w_1)=d_{G_1}(w_2)=5$ and $d_{G_1}(w_3)=d_{G_1}(w_4)=7$. If $v$ is adjacent only to $w_3$ and $w_4$, then the induced subgraph $G[\{v,w_3,w_4\}]$ is a multi-triangle with $8$ edges and minimum degree at least $4$, which is $\S_5$-contractible by \Cref{thm:small-S5-contractible}\ref{2_thm:small-S5-contractible}, a contradiction. Hence, $v$ has a neighbor
in $\{w_1,w_2\}$; by symmetry, we may assume that $vw_2\in E(G)$. Again, we have $d_G(w_2)\ge 6$ and $1\le \mu_G(v,w_2)\le 3$.

In either case, we lift the $2$-path $w_1w_2w_3$, and let $H$ be the subgraph induced by $\{w_1,w_3,w_4\}$. 
Note that $H=T_{2,3,3}$, so $H$ is $\S_5$-contractible. Starting from $G$, lifting the path $w_1w_2w_3$, and contracting $H$ yields a graph $G_0$ with $v(G_0)=3$ and $e(G_0)=8$. Let $w$ denote the vertex in $G_0$ formed from contracting $H$. Since $|[w_2,\{w_1,w_3,w_4\}]|_G=5$, we have $\mu_{G_0}(w,w_2)=3$. Moreover, $d_G(v)\ge5$, $1\le \mu_{G_0}(v,w_2)\le 3$, and $2\le \mu_{G_0}(v,w)\le 4$, which together imply that $\delta(G_0)\ge 4$. By \Cref{thm:small-S5-contractible}\ref{2_thm:small-S5-contractible}, $G_0$ is $\S_5$-contractible, contradicting \Cref{lem:property}\ref{lem:property-lifting}.
\end{proof}

\begin{lemma}\label{lem:6-edge-connected}
The graph $G$ is $6$-edge-connected.
\end{lemma}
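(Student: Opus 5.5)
We argue by contradiction using the Initial Gap Lemma (\Cref{lemma:partition-value}) applied to a two-part partition. Suppose $G$ has an edge-cut $[X,X^c]$ with $d(X)\le 5$. By \Cref{prop:property-S5}\ref{2_prop:property-S5}, $G$ is $4$-edge-connected, so $d(X)\in\{4,5\}$. Take the partition $\P:=\{X,X^c\}$ of $G$. Its weight is
\[
w_G(\P)=2d(X)-20+16=2d(X)-4\in\{4,6\}.
\]
Write $H_1:=G[X]$, $H_2:=G[X^c]$ and $\sigma_i:=6-w(H_i)$. By \Cref{lemma:partition-value}\ref{C3-sum} with $H=G$, we get $\sigma_1+\sigma_2=w_G(\P)\le 6$.

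Next we use the classification in \Cref{lemma:partition-value}\ref{C2-increasing}: each $\sigma_i$ lies in $\{0,4,6\}$ or satisfies $\sigma_i\ge 8$, and $\sigma_i=0$ exactly when $H_i=K_1$. If neither part is a singleton, then $\sigma_1+\sigma_2\ge 4+4=8>6$, which is impossible. So without loss of generality $X=\{v\}$ and $\sigma_1=0$. Then $\sigma_2=w_G(\P)\in\{4,6\}$, and $H_2=G-v$ must be one of two things.
\begin{itemize}
\item If $\sigma_2=4$, then $H_2=3K_2$.
\item If $\sigma_2=6$, then $H_2\in\N_5\setminus\{3K_2\}=\{2K_2,T_{1,3,3},T_{2,2,3},W_1,W_2\}$.
\end{itemize}
Here $H_2$ is a proper subgraph of $G$ on at least $4$ vertices, since $v(G)\ge 5$ by \Cref{lem:property}\ref{lem:property-vertices}, so \Cref{cor:w(G)=0} applies to it.

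Finally we eliminate each possibility by counting vertices. The graphs $3K_2$, $2K_2$, $T_{1,3,3}$ and $T_{2,2,3}$ have at most $3$ vertices, so in those cases $v(G)\le 4$, contradicting \Cref{lem:property}\ref{lem:property-vertices}. The remaining case is $G-v\in\{W_1,W_2\}$, which is exactly what \Cref{lemma:9-5_vertex} rules out.

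Hence no cut of size at most $5$ exists, and $G$ is $6$-edge-connected. All the real work sits in \Cref{lemma:9-5_vertex}; once that lemma is available, the only point needing care here is that the gap values leave no room for two nonsingleton sides.
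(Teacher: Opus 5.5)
Your proof is correct and follows essentially the same route as the paper. Both use a two-part partition and the Initial Gap Lemma to rule out two non-singleton sides, then invoke \Cref{lemma:9-5_vertex} to exclude $G-v\in\{W_1,W_2\}$. The paper states this directly as $w_G(\P)\ge 8$ for every bipartition, rather than arguing by contradiction on the exact values of $\sigma_i$, which is only a cosmetic difference.
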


\begin{proof}
Let $\P=\{X,X^c\}$ be a partition of $G$, where $|X|\ge |X^c|\ge 1$. Since $v(G)\ge 5$ by \Cref{lem:property}\ref{lem:property-vertices}, we have either $|X|\ge 3$ and $|X^c|\ge 2$, or $|X|\ge 4$ and $|X^c|=1$.

In the former case, the Initial Gap Lemma (\Cref{lemma:partition-value}) gives $\sigma(G[X])\ge 6$ and $\sigma(G[X^c])\ge 4$, and hence $w_G(\P)\ge 10$. In the latter case, $G[X^c]= K_1$. So \Cref{lemma:9-5_vertex} gives
that $G[X]\notin\{W_1,W_2\}$. Now again by \Cref{lemma:partition-value} we have $w_G(\P)\ge 8$.

Thus, for every bipartition $\P=\{X,X^c\}$ of $G$, we have $w_G(\P)\ge 8$. Since $w_G(\P)=2|[X,X^c]|-4$, we get $|[X,X^c]|\ge (8+4)/2 = 6$. That is, $G$ is $6$-edge-connected.
\end{proof}

\begin{lemma}\label{lemma:no-T113}
    The graph $G$ contains no $T_{1,1,3}$ as a subgraph.
\end{lemma}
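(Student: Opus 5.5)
The plan is to argue by contradiction with \Cref{lem:property}\ref{lem:property-lifting}. Suppose $G$ contains a copy of $T_{1,1,3}$ on vertices $x,y,z$ with $\mu(x,y)\ge 3$ and edges $xz$, $yz$. We lift the $2$-path $xzy$. Since $x$ and $y$ are adjacent, they lie on a common face next to an $xy$-edge, so the new edge can be drawn parallel to an existing $xy$-edge and the resulting $G'$ is planar. In $G'$ we have $\mu(x,y)\ge 4$, so $H:=G'[\{x,y\}]$ contains $4K_2$. By \Cref{thm:small-S5-contractible} and \Cref{prop:property-S5}\ref{4_prop:property-S5}, $H$ is $\S_5$-contractible. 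It then suffices to show that $G'':=G'/H$ is $\S_5$-contractible. Note that $G''$ is exactly $G/\{x,y\}$ with the two parallel edges from the new vertex $v_{xy}$ to $z$ deleted.

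\textbf{Step 1 (weight comparison).} Take a partition $\P$ of $G''$ and let $\P'$ be its $H$-restored partition of $G$, so $x,y$ lie in a common part $V_1$ of $\P'$. If $z\in V_1$, the two deleted edges are internal and $w_{G''}(\P)=w_G(\P')$. Otherwise $w_{G''}(\P)=w_G(\P')-4$. By the Initial Gap Lemma (\Cref{lemma:partition-value}) we have $w_G(\P')=\sum\sigma_i$, where $\sigma_1\ge 4$ because $G[V_1]\supseteq 3K_2$, with equality only if $V_1=\{x,y\}$. Hence $w_{G''}(\P)\ge 0$ always, and $w(G'')\ge 0$.

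\textbf{Step 2 (excluding $\N_5$).} Suppose $G''/\P\in\N_5$. Since every member of $\N_5$ has weight at most $2$, $\P'$ must have small weight. This forces $z\notin V_1$ and $\sum\sigma_i\le 6$. So either $V_1=\{x,y\}$ with at most one other non-singleton part, and that part is $3K_2$ of co-weight $4$ (impossible, as the total would be $8$); or the configuration is very restricted. Concretely, we get $w_G(\P')\in\{4,6\}$. Next we use \Cref{lem:6-edge-connected}. In $G''/\P$, only the parts containing $v_{xy}$ and $z$ lose degree, each by exactly $2$, so every other vertex has degree at least $6$ and those two have degree at least $4$. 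This rules out $2K_2$ and $3K_2$ immediately. It also rules out $T_{1,3,3}$ and $W_1$, which have three vertices of degree less than $6$. For $T_{2,2,3}$ and $W_2$, the two low-degree vertices must be the parts of $v_{xy}$ and $z$, and restoring the two edges gives $G/\P'$ with weight $4$. Then all parts other than $\{x,y\}$ are singletons, so $v(G)=v(G''/\P)+1\le 5$.

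\textbf{Step 3 (residual small case), the main obstacle.} We must handle $v(G)=5$, where $G/\{x,y\}$ is $T_{2,2,3}$ or $W_2$ with two extra edges between the two vertices of least degree. Here I would use $e(G)=17$ (\Cref{cor:w(G)=0}), $\mu(G)\le 3$ and $6$-edge-connectivity to pin down $G$ explicitly. Then I would either choose a different lifting (for instance lifting a path through $z$ to another neighbor, in the spirit of \Cref{lemma:9-5_vertex}) that produces a $3$-vertex $\S_5$-contractible quotient via \Cref{thm:small-S5-contractible}\ref{2_thm:small-S5-contractible}, or show directly that such a $G$ contains an $\S_5$-contractible proper subgraph such as $T_{a,b,c}$ with $a+b+c\ge 8$. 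Carrying out this finite check carefully is where I expect most of the work.
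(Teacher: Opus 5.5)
\textbf{Steps 1 and 2.} These follow the paper's own route. You lift $xzy$, contract the resulting $4K_2$, and use the Initial Gap Lemma together with $6$-edge-connectivity. This forces $z\notin V_1$, $V_1=\{x,y\}$ and all other parts to be singletons, so $G''/\P=W_2$ and $v(G)=5$. There is one slip: you have $T_{1,3,3}$ and $T_{2,2,3}$ swapped. $T_{1,3,3}$ has degrees $4,4,6$, so only two of its vertices have degree below $6$, while $T_{2,2,3}$ has degrees $4,5,5$. The slip is harmless, because any $3$-vertex quotient of weight $0$ gives $v(G)=4$, contradicting \Cref{lem:property}\ref{lem:property-vertices}. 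It does mean the residual case in Step 3 is only $G''=W_2$, not ``$T_{2,2,3}$ or $W_2$''.

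\textbf{The gap is Step 3.} It is a plan rather than an argument, and this residual configuration is exactly where the remaining content of the lemma lies. Your second option is also a dead end. The graphs that actually arise have no $\S_5$-contractible proper subgraph: for instance, every $4$-vertex subgraph has at most $17-6=11<12$ edges.

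What you need is to read the structure off from $W_2$. Its two $5$-vertices are $z$ and $v_{xy}$. Hence the other two vertices $a,b$ satisfy $\mu_G(a,b)=3$, $\mu_G(z,a)=\mu_G(z,b)=2$, $|[a,\{x,y\}]|=|[b,\{x,y\}]|=2$, and $|[z,\{x,y\}]|=3$.

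The paper then uses $\delta(G)\ge 6$ and the absence of $K_5$ to pin down two explicit graphs. It takes $\mu_G(z,x)=2$ and $xa\in E(G)$, and lifts $zxa$. Now $\{z,a,b\}$ spans $T_{2,3,3}$, and contracting it leaves $T_{2,3,3}$ or $T_{1,3,4}$, contradicting \Cref{lem:property}\ref{lem:property-lifting}.

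A uniform finish closer to your ``path through $z$'' idea avoids the case split. Lift $azb$, so that $\{a,b\}$ spans $4K_2$, and contract it. The result is a $4$-vertex graph with $12$ edges and multiplicity at most $4$, with these degrees:
\begin{itemize}
\item $z$ has degree $5$;
\item the new vertex has degree $6$;
\item $x$ and $y$ keep degrees $d_G(x),d_G(y)\in\{6,7\}$, since $d_G(x)+d_G(y)=13$ and $\delta(G)\ge 6$.
\end{itemize}
With only one $5$-vertex it is neither $W_1$ nor $W_2$. So it is $\S_5$-contractible by \Cref{thm:small-S5-contractible}\ref{3_thm:small-S5-contractible}, again contradicting \Cref{lem:property}\ref{lem:property-lifting}.
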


\begin{figure}[htbp!]
    \centering
    \begin{subfigure}{0.28\textwidth}
           \centering
	    \begin{tikzpicture}[scale=1]
		    \draw [line width=0.5pt, black] (0,1.732) to (1,0);
		    \draw [line width=0.5pt, black] (0,1.732) to (-1,0);  
  		    \draw [line width=0.5pt, black] (1,0) to (-1,0);
  		    \draw [bend left=18,line width=0.5pt, black] (1,0) to (-1,0);
		    \draw [bend right=18,line width=0.5pt, black] (1,0) to (-1,0);
	    	\draw [fill=black,line width=0.2pt] (-1,0) node[left=0.5mm] {$v_2$} circle (2.5pt);
	    	\draw [fill=black,line width=0.2pt] (1,0) node[right=0.5mm] {$v_3$} circle (2.5pt);
	    	\draw [fill=white,line width=0.5pt] (0,1.732) node[above] {$v_1$} circle (2.5pt); 
	    \end{tikzpicture}
	    \caption{$T_{1,1,3}$.}
	    \label{fig:T113}
    \end{subfigure}
    \hfill
    \begin{subfigure}{0.35\textwidth}
    \centering
    \begin{tikzpicture}[scale=0.6]			
    \draw [bend right=6, line width=0.5pt, black] (0,2) to (-2.2,-1.3); 
    \draw [bend left=6, line width=0.5pt, black] (0,2) to (-2.2,-1.3); 
    \draw [bend right=6, line width=0.5pt, black] (0,2) to (2.2,-1.3); 
    \draw [bend left=6, line width=0.5pt, black] (0,2) to (2.2,-1.3); 
    \draw [bend right=8, line width=0.5pt, black] (-2.2,-1.3) to (2.2,-1.3); 
    \draw [bend left=8, line width=0.5pt, black] (-2.2,-1.3) to (2.2,-1.3); 
    \draw [line width=0.5pt, black] (-2.2,-1.3) to (2.2,-1.3); 
    \draw [bend right=8, line width=0.5pt, black] (0,2) to (-0.5,0.3); 
    \draw [bend left=8, line width=0.5pt, black] (0,2) to (-0.5,0.3); 
    \draw [line width=0.5pt, black] (0,2) to (0.5,0.3); 
    \draw [bend right=22, line width=0.5pt, black] (0.5,0.3) to (-0.5,0.3); 
    \draw [bend left=22, line width=0.5pt, black] (0.5,0.3) to (-0.5,0.3); 
    \draw [line width=0.5pt, black] (0.5,0.3) to (-0.5,0.3); 
    \draw [line width=0.5pt, black] (-2.2,-1.3) to (-0.5,0.3); 
    \draw [line width=0.5pt, black] (-2.2,-1.3) to (0.5,0.3); 
    \draw [bend right=7, line width=0.5pt, black] (2.2,-1.3) to (0.5,0.3); 
    \draw [bend left=7, line width=0.5pt, black] (2.2,-1.3) to (0.5,0.3); 
    \draw [fill=white,line width=0.5pt] (0,2) node[above] {$v_1$} circle (4pt) ; 
    \draw [fill=white,line width=0.5pt] (-0.5,0.3) node[below=0.7mm] {$v_2$} circle (4pt) ; 
    \draw [fill=white,line width=0.5pt] (0.5,0.3) node[below=0.7mm] {$v_3$} circle (4pt) ; 
    \draw [fill=white,line width=0.5pt] (-2.2,-1.3) node[left] {$v_4$} circle (4pt) ; 
    \draw [fill=white,line width=0.5pt] (2.2,-1.3) node[right] {$v_5$} circle (4pt) ; 
    \end{tikzpicture}
    \caption{A reconstruction of $G$.}
    \label{fig:E2-add-edge1}
    \end{subfigure}    
    \hfill
    \begin{subfigure}{0.35\textwidth}
    \centering
    \begin{tikzpicture}[scale=0.6]			
    \draw [bend right=6, line width=0.5pt, black] (0,2) to (-2.2,-1.3); 
    \draw [bend left=6, line width=0.5pt, black] (0,2) to (-2.2,-1.3); 
    \draw [bend right=6, line width=0.5pt, black] (0,2) to (2.2,-1.3); 
    \draw [bend left=6, line width=0.5pt, black] (0,2) to (2.2,-1.3); 
    \draw [bend right=8, line width=0.5pt, black] (-2.2,-1.3) to (2.2,-1.3); 
    \draw [bend left=8, line width=0.5pt, black] (-2.2,-1.3) to (2.2,-1.3); 
    \draw [line width=0.5pt, black] (-2.2,-1.3) to (2.2,-1.3); 
    \draw [bend right=8, line width=0.5pt, black] (0,2) to (-0.5,0.3); 
    \draw [bend left=8, line width=0.5pt, black] (0,2) to (-0.5,0.3); 
    \draw [line width=0.5pt, black] (0,2) to (0.5,0.3); 
    \draw [bend right=22, line width=0.5pt, black] (0.5,0.3) to (-0.5,0.3); 
    \draw [bend left=22, line width=0.5pt, black] (0.5,0.3) to (-0.5,0.3); 
    \draw [line width=0.5pt, black] (0.5,0.3) to (-0.5,0.3); 
    \draw [bend right=7, line width=0.5pt, black] (-2.2,-1.3) to (-0.5,0.3); 
    \draw [bend left=7, line width=0.5pt, black] (-2.2,-1.3) to (-0.5,0.3); 
    \draw [bend right=7, line width=0.5pt, black] (2.2,-1.3) to (0.5,0.3); 
    \draw [bend left=7, line width=0.5pt, black] (2.2,-1.3) to (0.5,0.3); 
    \draw [fill=white,line width=0.5pt] (0,2) node[above] {$v_1$} circle (4pt) ; 
    \draw [fill=white,line width=0.5pt] (-0.5,0.3) node[below=0.7mm] {$v_2$} circle (4pt) ; 
    \draw [fill=white,line width=0.5pt] (0.5,0.3) node[below=0.7mm] {$v_3$} circle (4pt) ; 
    \draw [fill=white,line width=0.5pt] (-2.2,-1.3) node[left] {$v_4$} circle (4pt) ; 
    \draw [fill=white,line width=0.5pt] (2.2,-1.3) node[right] {$v_5$} circle (4pt) ; 
\end{tikzpicture}
    \caption{Another reconstruction of $G$.}
    \label{fig:E2-add-edge2}
    \end{subfigure}
    \caption{The configuration $T_{1,1,3}$ and related reconstructions of $G$}
    \label{fig:NoT113}
\end{figure}
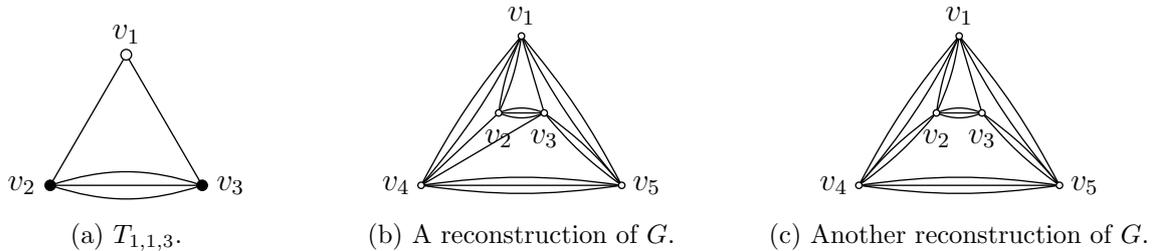

\begin{proof}
Suppose instead that $G$ contains a copy of $T_{1,1,3}$, as shown in \Cref{fig:T113}. We lift the path $v_2v_1v_3$ to form a graph $G_1$, let $H:=G_1[\{v_2,v_3\}]$, and note that $H=4K_2$. Form $G_2$ from $G_1$ by contracting $H$, and let $v_H$ denote the resulting new vertex. Now $V(G_2)=V(G)\cup\{v_H\}\setminus\{v_2,v_3\}$.

For each partition $\P$ of $G_2$, let $\P_H$ denote the corresponding $H$-restored partition of $G_1$. Note that $\P_H$ is also a partition of $G$ of type $(2^+,*)$. 
By the Initial Gap Lemma (\Cref{lemma:partition-value}), we have $w_G(\P_H)\ge 4$, so
$w_{G_2}(\P)=w_{G_1}(\P_H)\ge w_G(\P_H)-2\times 2\ge 0$.
Hence, $w(G_2)\ge 0$. Moreover, if $\P$ is nontrivial, then $\P_H$ has type $(3^+,*)$ or 
$(2^+,2^+,*)$, so $w_G(\P_H)\ge \min\{6,4+4\}=6$ and thus $w_{G_2}(\P)\ge 6-2\times2 = 2$.

By \Cref{lem:property}\ref{lem:property-lifting}, the graph $G_2$ is not $\S_5$-contractible. Hence, there exists a partition $\P'$ of $G_2$ such that $G_2/\P'\in\mathcal{N}_5$. Since $G$ is $6$-edge-connected by \Cref{lem:6-edge-connected}, the graph $G_2$ is $4$-edge-connected. Thus,
$G_2/\P'\in\{T_{1,3,3},T_{2,2,3},W_1,W_2\}$.
Every nontrivial partition $\mathcal{P}$ of $G_2$ has $w_{G_2}(\mathcal{P})\ge 2$. So $\P'$ must be trivial. 
Hence, $G_2=G_2/\P'\in\{T_{1,3,3},T_{2,2,3},W_1,W_2\}$.
Since $v(G_2)=v(G)-1\ge 4$, we have $G_2\in\{W_1,W_2\}$. 
Note that $d_{G_2}(v)=d_G(v)$ for all $v\in V(G_2)\setminus\{v_1,v_H\}$.
So by \Cref{lem:6-edge-connected}, each such $v$ satisfies $d_{G_2}(v)\ge 6$.

Note that $G_2\ne W_1$, as $W_1$ contains three $5$-vertices, contradicting that $G$ is $6$-edge-connected. 
Hence, $G_2=W_2$. In this case, $G_2$ has exactly two $5$-vertices, namely $v_1$ and $v_H$. Denote the remaining vertices of $G_2$ by $v_4$ and $v_5$, which are both $6^+$-vertices.
We now restore the graph $G$ from $G_2$. By symmetry, assume that $\mu_G(v_1,v_2)=2$, $\mu_G(v_1,v_3)=1$, and that $v_4$ is adjacent to $v_2$. 
Note that $v_4$ and $v_5$ cannot both be adjacent to each of $v_2$ and $v_3$, since otherwise $G$ would contain a $K_5$, contradicting the planarity of $G$. 
Moreover, if $v_5$ is adjacent to both $v_2$ and $v_3$, then $\mu_G(v_3,v_5)=1$ and $v_4$ cannot be adjacent to $v_3$. Consequently, $d_G(v_3)=5$, contradicting the $6$-edge-connectivity of $G$. Therefore, $G$ must be one of the two graphs shown in~\Cref{fig:E2-add-edge1,fig:E2-add-edge2}.

In each case, we lift the path $v_1v_2v_4$ and the new subgraph induced by $\{v_1,v_4,v_5\}$ is $T_{2,3,3}$, 
which is $\S_5$-contractible by \Cref{thm:small-S5-contractible}\ref{2_thm:small-S5-contractible}. Contracting this subgraph yields a copy of either $T_{1,3,4}$ (in \Cref{fig:E2-add-edge1}) or $T_{2,3,3}$ (in \Cref{fig:E2-add-edge2}), both of which are $\S_5$-contractible. This contradicts \Cref{lem:property}\ref{lem:property-lifting}.
\end{proof}

We can now strengthen our Initial Gap Lemma as follows.
The key improvement is that now fewer graphs $H_i$ can have $\sigma_i=6$.

\begin{lemma}[Gap Lemma]\label{lemma:gap-lemma}
Let $H$ be a subgraph of $G$, and let $\P=\{V_1,\dots,V_t\}$ be a partition of $H$. 
For each $i\in\{1,\ldots,t\}$, let $H_i:=H[V_i]$ and let $\sigma_i:=6-w(H_i)$.
The following $3$ statements hold.
\begin{enumerate}[label=(\arabic*)]
\setlength{\itemsep}{0em}
    \item 
    $w(H_i)=2e(H_i)-10|V_i|+16$.
    \item\label{C2-increasing-more}
    \[
    \sigma_i=
        \begin{cases}
            0, & \text{if } H_i= K_1,\\
            4, & \text{if } H_i= 3K_2,\\
            6, & \text{if } H_i= 2K_2.
        \end{cases}
    \]
    Otherwise, $\sigma_i\ge 8$.
    \item 
    $w_H(\P)=\sum_{i=1}^t \sigma_i + w(H)$. In particular, if $H=G$, then
    $w_G(\P)=\sum_{i=1}^t \sigma_i$.
\end{enumerate}
\end{lemma}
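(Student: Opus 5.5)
The plan is to obtain the Gap Lemma as a direct strengthening of the Initial Gap Lemma (\Cref{lemma:partition-value}). Parts (1) and (3) are identical to the corresponding parts there, and their proofs carry over verbatim. Part (1) uses \Cref{lem:trivial}. Part (3) refines each part by its trivial partition and applies the Refinement Lemma (\Cref{lemma:refine-2}), together with $w(G)=0$ from \Cref{cor:w(G)=0}. So the only new content is part \ref{C2-increasing-more}. Comparing with the Initial Gap Lemma, it suffices to show that no subgraph $H_i=H[V_i]$ of $G$ is isomorphic to $T_{1,3,3}$, $T_{2,2,3}$, $W_1$, or $W_2$. The two exceptional cases need care: $H_i$ may equal $G$ when $H=G$ and $t=1$, but then $v(H_i)=v(G)\ge 5$, so $H_i$ cannot be one of these graphs. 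Otherwise $H_i$ is a proper subgraph, and the values $\sigma_i\in\{0,4,6\}$ for $K_1$, $3K_2$, $2K_2$, as well as $\sigma_i\ge 8$ otherwise, follow from \Cref{cor:w(G)=0} once those four graphs are excluded.

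The key tool is \Cref{lemma:no-T113}: $G$ contains no $T_{1,1,3}$ as a subgraph. Since $T_{1,3,3}\supseteq T_{1,1,3}$ (delete two of the three parallel edges in one of the triple bundles), $T_{1,3,3}$ is excluded immediately. Likewise $T_{2,2,3}\supseteq T_{1,1,3}$, by keeping one edge from each double bundle together with the triple bundle. For $W_1$, the central vertex $w_4$ is joined by $3$ parallel edges to $w_3$ and to $w_1$, and $w_1w_3$ is an edge. Thus $\{w_1,w_3,w_4\}$ spans $T_{1,3,3}\supseteq T_{1,1,3}$, which excludes $W_1$. For $W_2$, using the labeling of \Cref{fig:proof-E2}, the pair $w_3w_4$ has multiplicity $3$, and $w_1$ is adjacent to both $w_3$ and $w_4$. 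Hence $\{w_1,w_3,w_4\}$ spans $T_{2,2,3}\supseteq T_{1,1,3}$, which excludes $W_2$. Since subgraphs of subgraphs of $G$ are subgraphs of $G$, each of the four graphs in $\mathcal{N}_5\setminus\{2K_2,3K_2\}$ would force a $T_{1,1,3}$ in $G$. That would contradict \Cref{lemma:no-T113}.

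I expect no real obstacle; the substantial work was already done in \Cref{lemma:no-T113}. The only point needing care is checking, from the figures, that each of $W_1$ and $W_2$ really contains a triple bundle whose endpoints have a common neighbour, so that a $T_{1,1,3}$ embeds.
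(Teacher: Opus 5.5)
Your proposal is correct and follows the paper's proof: the paper also derives (2) from the Initial Gap Lemma by noting that each of $T_{1,3,3}$, $T_{2,2,3}$, $W_1$, $W_2$ contains a $T_{1,1,3}$, which \Cref{lemma:no-T113} forbids, and your explicit embeddings (via $\{w_1,w_3,w_4\}$ in $W_1$ and $W_2$) just spell out what the paper asserts in one line. One small correction: $v(G)\ge 5$ does not settle the case $H=G$, $t=1$, since there $\sigma_1=6-w(G)=6$ would violate ``otherwise $\sigma_i\ge 8$''; that case is instead excluded by the paper's standing convention that the single-part partition of $G$ is never considered.
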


\begin{proof}
By \Cref{lemma:no-T113}, the graph $G$ contains no copy of $T_{1,1,3}$. 
But every graph in $\N_5\setminus\{2K_2,3K_2\}
=\{T_{1,3,3},T_{2,2,3},W_1,W_2\}$ contains a copy of $T_{1,1,3}$. Hence, none of these graphs appears as an induced subgraph of any $H_i$; this gives the stated values of $\sigma_i$.
\end{proof}

In the remainder of the proof, we frequently apply the Gap Lemma. Recall that the co-weight $\sigma(H)$ of a 
graph $H$ is defined by $\sigma(H) = 6 - w(H)$. By \Cref{lemma:gap-lemma}, if $v(H) \ge 2$, then $\sigma(H) \ge 4$; and if $v(H) \ge 3$, then $\sigma(H) \ge 8$.

A bipartition $\{X, X^c\}$ of a graph $H$ is an \emph{essential $k$-edge-cut} if $|X|, |X^c| \ge 2$ and $|[X,X^c]| = k$. And $H$ is \emph{essentially $k$-edge-connected} if each essential $\ell$-edge-cut has $\ell \ge k$.

\begin{lemma}\label{lem:essential-8-edge-connected}
    The graph $G$ is essentially $8$-edge-connected.
\end{lemma}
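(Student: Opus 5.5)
The plan is to read the size of every edge-cut off the Gap Lemma (\Cref{lemma:gap-lemma}), as in the proof of \Cref{lem:6-edge-connected}. Let $\{X,X^c\}$ be an essential edge-cut of $G$, so $|X|,|X^c|\ge 2$, and write $\P=\{X,X^c\}$. Directly from \Cref{def:w(G)}, $w_G(\P)=2|[X,X^c]|-20+16=2|[X,X^c]|-4$. Part (3) of the Gap Lemma with $H=G$ gives $w_G(\P)=\sigma(G[X])+\sigma(G[X^c])$. Hence
\[
|[X,X^c]|=\tfrac12\bigl(\sigma(G[X])+\sigma(G[X^c])+4\bigr),
\]
and it suffices to show $\sigma(G[X])+\sigma(G[X^c])\ge 12$.

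Since $v(G)\ge 5$ by \Cref{lem:property}\ref{lem:property-vertices}, the two parts cannot both have exactly two vertices. So without loss of generality $|X|\ge 3$ and $|X^c|\ge 2$.

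\begin{itemize}
\item For the part $X$: $G[X]$ has at least three vertices, so it is not $K_1$, $2K_2$ or $3K_2$. The Gap Lemma \ref{C2-increasing-more} then gives $\sigma(G[X])\ge 8$.
\item For the part $X^c$: $G[X^c]\neq K_1$, and the Gap Lemma gives $\sigma(G[X^c])\ge 4$, with equality only when $G[X^c]=3K_2$.
\end{itemize}

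Adding these, $\sigma(G[X])+\sigma(G[X^c])\ge 12$, so $|[X,X^c]|\ge 8$. If both parts have at least three vertices, the same argument even gives $|[X,X^c]|\ge 10$.

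The only point needing care is that the Gap Lemma is applied to induced subgraphs $G[X]$ that may be disconnected, for instance two nonadjacent vertices. The lemma is stated for arbitrary subgraphs $H$ and arbitrary partitions, so this is covered. As a direct check in that case, $w=2\cdot 0-20+16=-4$, so $\sigma=10\ge 4$, and nothing changes.

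No real obstacle is expected here. All the work was done earlier, in \Cref{lemma:9-5_vertex}, \Cref{lem:6-edge-connected} and \Cref{lemma:no-T113}, which together sharpen the Initial Gap Lemma to the Gap Lemma.
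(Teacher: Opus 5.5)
Your proof is correct and is essentially the paper's own argument. Both proofs write $w_G(\P)=2|[X,X^c]|-4=\sigma(G[X])+\sigma(G[X^c])$ via the Gap Lemma, use $v(G)\ge 5$ to get one side with at least three vertices (co-weight at least $8$) and the other with at least two (co-weight at least $4$), and conclude $|[X,X^c]|\ge 8$. Your side remarks also hold: disconnected parts are covered, and two parts of size at least $3$ give $|[X,X^c]|\ge 10$, which the paper re-derives in \Cref{lemma:essential-10}.
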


\begin{proof}
Let $[X,X^c]$ be an essential edge-cut in $G$ with $|X|\le |X^c|$, and let $\P := \{X, X^c\}$. Since $v(G)\ge 5$ by \Cref{lem:property}\ref{lem:property-vertices}, we have $|X|\ge 2$ and $|X^c|\ge 3$. By the Gap Lemma (\Cref{lemma:gap-lemma}),
    \(
        w_G(\P) = \sigma(G[X]) + \sigma(G[X^c]) \ge \min\{4,6\} + 8 \ge 12.
    \)
    But $w_G(\P) = 2|[X,X^c]| - 10 \times 2 + 16 = 2|[X,X^c]| - 4$, so $2|[X,X^c]| - 4 \ge 12$ implies $|[X,X^c]| \ge 8$. 
\end{proof}

The Gap Lemma (\Cref{lemma:gap-lemma}) provides a structural characterization of subgraphs $H \subseteq G$ with $w(H) \ge 0$. To handle the remaining cases, we further describe subgraphs $H \subsetneq G$ with $w(H) = -2$ and $w(H) = -4$. 

For each $k \in \{1,2\}$, let $\mathcal{F}_k$ denote the set of graphs $F'$ that can be formed from 
$\{2K_2, T_{1,3,3}, T_{2,2,3}, 2K_4\}$ by deleting $k$ edges, such that $F'$ contains 
no copy of $T_{1,1,3}$. Observe that \Cref{fig:4-graphs-w(G)=-2} shows $\mathcal{F}_1$. 
Note that each graph in $\mathcal{F}_2$ can be formed from some graph in $\mathcal{F}_1$ by deleting one additional edge. 
Moreover, each graph in $\mathcal{F}_k$ can have $k$ edges added, without creating an edge of multiplicity $5$, resulting in a graph that is not $\S_5$-contractible.

\begin{figure}[htbp!]
    \centering
    \begin{subfigure}{0.2\textwidth}
    \centering
        \begin{tikzpicture}[scale=0.8]			
    \draw [line width=0.5pt, black] (-1,1) to (1,1); 
    
    \draw [fill=white,line width=0.5pt] (-1,1) node[left] {} circle (3pt) ; 
    \draw [fill=white,line width=0.5pt] (1,1) node[right] {} circle (3pt) ; 
      \end{tikzpicture}
        \caption{$K_2$}
		\label{fig:1-w=-2-graph}
    \end{subfigure}
    \hfill
    \begin{subfigure}{0.2\textwidth}
    \centering
        \begin{tikzpicture}[scale=0.8]			
    \draw [bend left=15, line width=0.5pt, black] (-1,-0.58) to (0,1.15); 
    \draw [bend left=15, line width=0.5pt, black] (0,1.15) to (-1,-0.58); 
    \draw [line width=0.5pt, black] (0,1.15) to (-1,-0.58); 

    \draw [bend left=15, line width=0.5pt, black] (1,-0.58) to (0,1.15); 
    \draw [bend left=15, line width=0.5pt, black] (0,1.15) to (1,-0.58); 
    \draw [line width=0.5pt, black] (0,1.15) to (1,-0.58);

    \draw [fill=white,line width=0.5pt] (-1,-0.58) node[left] {} circle (3pt) ; 
    \draw [fill=white,line width=0.5pt] (1,-0.58) node[right] {} circle (3pt) ;
    \draw [fill=white,line width=0.5pt] (0,1.15) node[right] {} circle (3pt) ;
\end{tikzpicture}
        \caption{$2P_3$}
		\label{fig:2-w=-2-graph}
    \end{subfigure}
    \hfill
    \begin{subfigure}{0.2\textwidth}
    \centering
        \begin{tikzpicture}[scale=0.8]			
    \draw [bend left=15, line width=0.5pt, black] (-1,-0.58) to (0,1.15); 
    \draw [bend left=15, line width=0.5pt, black] (0,1.15) to (-1,-0.58); 

    \draw [bend left=15, line width=0.5pt, black] (1,-0.58) to (0,1.15); 
    \draw [bend left=15, line width=0.5pt, black] (0,1.15) to (1,-0.58); 

    \draw [bend left=15, line width=0.5pt, black] (-1,-0.58) to (1,-0.58); 
    \draw [bend right=15, line width=0.5pt, black] (-1,-0.58) to (1,-0.58); 
    
    \draw [fill=white,line width=0.5pt] (-1,-0.58) node[left] {} circle (3pt) ; 
    \draw [fill=white,line width=0.5pt] (1,-0.58) node[right] {} circle (3pt) ;
    \draw [fill=white,line width=0.5pt] (0,1.15) node[right] {} circle (3pt) ;
\end{tikzpicture}
        \caption{$2K_3$}
		\label{fig:3-w=-2-graph}
    \end{subfigure}
    \hfill
    \begin{subfigure}{0.2\textwidth}
    \centering
        \begin{tikzpicture}[scale=0.6]			
    \draw [bend left=10,line width=0.5pt, black] (0,3) to (0,1.14); 
    \draw [bend right=10, line width=0.5pt, black] (0,3) to (0,1.14); 
    
    \draw [bend right=8, line width=0.5pt, black] (0,3) to (-1.73,0); 
    \draw [bend left=8, line width=0.5pt, black] (0,3) to (-1.73,0); 
    
    \draw [bend right=8, line width=0.5pt, black] (0,3) to (1.73,0); 
    \draw [bend left=8, line width=0.5pt, black] (0,3) to (1.73,0); 

    \draw [bend right=10, line width=0.5pt, black] (0,1.14) to (1.73,0); 
    \draw [bend left=10, line width=0.5pt, black] (0,1.14) to (1.73,0); 

    \draw [bend right=10, line width=0.5pt, black] (0,1.14) to (-1.73,0); 
    \draw [bend left=10, line width=0.5pt, black] (0,1.14) to (-1.73,0);

    \draw [line width=0.5pt, black] (-1.73,0) to (1.73,0); 

    \draw [fill=white,line width=0.5pt] (0,3) node[above] {} circle (4pt) ; 
    \draw [fill=white,line width=0.5pt] (-1.73,0) node[left] {} circle (4pt) ; 
    \draw [fill=white,line width=0.5pt] (1.73,0) node[right] {} circle (4pt) ; 
    \draw [fill=white,line width=0.5pt] (0,1.14) node[below=2mm] {} circle (4pt); 
    
\end{tikzpicture}
        \caption{$2K_4-e$ }
		\label{fig:4-w=-2-graph}
    \end{subfigure}
    \caption{All the elements of $\mathcal{F}_1$}
    \label{fig:4-graphs-w(G)=-2}
\end{figure}
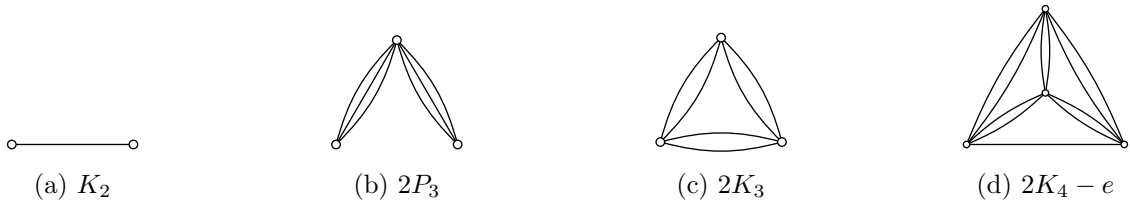

\begin{lemma}\label{lem:w(H)=-2k}
Fix $k\in \{1,2\}$. For each $H\subseteq G$, if $w(H)=-2k$, then either:
    \begin{enumerate}[label=(\arabic*)]
    \setlength{\itemsep}{0em}
        \item\label{-2kweight1} Every graph $H'$ formed from $H$ by adding $k$ edges, without creating an edge of multiplicity $5$, is $\S_5$-contractible; or
        \item The subgraph $H$ is in $\mathcal{F}_k$.
    \end{enumerate}
Moreover, if (1) holds, then $v(H)\ge 4$ and $H$ is $(6-k)$-edge-connected.
\end{lemma}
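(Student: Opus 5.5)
Assume (1) fails: some $H'$, obtained from $H$ by adding $k$ edges without creating multiplicity $5$, is not $\S_5$-contractible. The plan is to show $H\in\mathcal{F}_k$ and then read off the "moreover" part.

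\textbf{Reducing to an obstruction.} Each added edge raises the weight of any partition by at most $2$, so $w_{H'}(\Q)\le w_H(\Q)+2k$ for every partition $\Q$. I first claim $w(H')\ge 0$. By the Gap Lemma (\Cref{lemma:gap-lemma}), the trivial partition attains $w(H)$, and any nontrivial partition $\Q$ of $H$ satisfies $w_H(\Q)=\sum\sigma_i+w(H)\ge 4-2k$. Hence $w_{H'}(\Q)\ge w_H(\Q)\ge 0$ for nontrivial $\Q$, while the trivial partition gives $w_{H'}=w(H)+2k=0$. So $w(H')\ge 0$, and therefore some partition $\Q$ of $H'$ has $H'/\Q\in\N_5$. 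Since $w_{H'}(\Q)\le 2$, and every $\sigma_i\ge 4$ for a nonsingleton part, only a few partitions are possible.

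\textbf{Case analysis.} If $\Q$ is trivial, then $H'\in\N_5$. Here I must also allow $H'=3K_2$, and $W_1,W_2$ themselves. Removing $k$ edges from these, and discarding any result containing $T_{1,1,3}$ (forbidden by \Cref{lemma:no-T113}), should leave exactly the graphs of $\mathcal{F}_k$. For this I would check that $W_1,W_2$ minus an edge always contain $T_{1,1,3}$ or are $2K_4-e$-like. The list $\{2K_2,T_{1,3,3},T_{2,2,3},2K_4\}$ in the definition of $\mathcal{F}_k$ must absorb $3K_2$ (deleting from $3K_2$ gives $K_2$ or $2K_2$) and $W_1,W_2$. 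This is where I expect the main bookkeeping.

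If $\Q$ is nontrivial, then $w_{H'}(\Q)\le 2$ forces $\sum\sigma_i\le 2+2k$ up to the added edges. Exactly, $w_{H'}(\Q)=w_H(\Q)+2j$, where $j$ is the number of added edges that are $\Q$-external. With $w_H(\Q)=\sum\sigma_i-2k$, we have $\sum\sigma_i\ge 4$, so a single $3K_2$ or $2K_2$ part is possible (for $k=2$, possibly two $2K_2$ parts). In these cases the contracted graph lies in $\N_5$ while containing a part with doubled or tripled edges. I would argue that $H$ then contains $T_{1,1,3}$, or a proper subgraph of $G$ of too large weight, contradicting \Cref{cor:w(G)=0}. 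Alternatively, it collapses to the same small list. I expect this nontrivial-partition subcase, together with the multiplicity-$5$ restriction, to be the main obstacle.

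\textbf{The moreover part.} Suppose (1) holds. If $v(H)\le 3$, then $H$ is $aK_2$ with $2a-4=-2k$ or $T_{a,b,c}$ with $2(a+b+c)-14=-2k$. Adding $k$ edges to reach weight $0$ then gives $2K_2$, $T_{1,3,3}$ or $T_{2,2,3}$ for a suitable placement, contradicting (1). Hence $v(H)\ge 4$. For edge-connectivity, take a cut $[X,X^c]$ of $H$ with $d(X)\le 5-k$. Add the $k$ edges inside $X$ or $X^c$, or across if needed, keeping multiplicity below $5$. The bipartition then shows $H'/\{X,X^c\}=aK_2$ with $a\le 5-k<4$ when the edges are placed internally, so $H'$ contains $2K_2$ or $3K_2$ as a contraction. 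This makes $H'$ not $\S_5$-contractible, a contradiction.
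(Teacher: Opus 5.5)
There are two genuine gaps: the nontrivial-partition case is never closed, and your edge-connectivity argument fails for $k=1$. Your setup is otherwise correct. Adding edges never lowers a partition's weight, so $w(H')\ge 0$ and some $\Q$ has $H'/\Q\in\N_5$. Your handling of trivial $\Q$ (delete $k$ edges from $W_1$ or $W_2$, discard anything containing $T_{1,1,3}$) matches the paper's endgame. There, $H'=3K_2$ cannot occur, because the trivial partition of $H'$ has weight $w(H)+2k=0\ne 2$. So $\mathcal{F}_k$ need not absorb $2K_2$, and indeed $2K_2\notin\mathcal{F}_1$.

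\textbf{Nontrivial $\Q$.} You leave this case open. The mechanisms you propose ($T_{1,1,3}$, or a subgraph of too large weight) are not what closes it. Your enumeration is also off: two nonsingleton parts already give $\sum\sigma_i\ge 8$. What you need is $w_{H'}(\Q)=\sum\sigma_i-2k+2j\ge 2$ for every nontrivial $\Q$.
\begin{itemize}
\item For $k=1$ this follows at once from $\sum\sigma_i\ge 4$.
\item For $k=2$, the only way to reach $0$ is $j=0$ with a single $3K_2$ part. Then both added edges lie inside that part and create $5K_2$. This is exactly where the multiplicity hypothesis is used.
\end{itemize}
Hence $H'/\Q=3K_2$. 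That is impossible once you know $H'\supseteq H$ is $(6-k)$-edge-connected with $6-k\ge 4$. Alternatively, it forces $|\Q|=2$ with one $2$-vertex part, i.e.\ $v(H)=3$. The cases $v(H)\le 3$ are best settled directly, as the paper does. Either $H=(2-k)K_2$, or $H=T_{a,b,c}$ with $a+b+c=7-k$, $\mu\le 3$ and no $T_{1,1,3}$; such an $H$ lies in $\mathcal{F}_k$ by definition.

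\textbf{Edge-connectivity at $k=1$.} A $4$-edge-cut plus one added edge gives $H'/\{X,X^c\}\in\{4K_2,5K_2\}$. Neither is in $\N_5$, and $4K_2$ is $\S_5$-contractible, so there is no contradiction; your inequality $5-k<4$ is false at $k=1$. The missing idea is that edge-connectivity does not come from (1) at all. It comes from the Gap Lemma applied to bipartitions. If $v(H)\ge 4$, then either both sides have at least two vertices or one side has at least three, so
\[ 2|[X,X^c]|-4=w_H(\{X,X^c\})=\sigma(H[X])+\sigma(H[X^c])-2k\ge \min\{4+4,\,8+0\}-2k, \]
giving $|[X,X^c]|\ge 6-k$. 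The paper proves this first, for every $H$ with $w(H)=-2k$ and $v(H)\ge 4$, and then uses it to kill the nontrivial partitions above. With your ordering (connectivity only at the end, only under (1), via the quotient trick), the nontrivial case has no support and the $5$-edge-connectivity claim for $k=1$ is unproved.
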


\begin{proof}
Fix $H \subset G$ with $w(H) = -2k$, so $e(H) = 5v(H) - (8+k)$. 

	If $v(H) = 2$, then $e(H) = 2-k$, and $H$ is formed from $2K_2$ by deleting $k$ edges; so (2) holds and (1) does not.
	If $v(H) = 3$, then $e(H) = 7-k$. Since $G$ contains neither $T_{1,1,3}$ (\Cref{lemma:no-T113}) nor $4K_2$ (it is $\S_5$-contractible), $H$ must be formed from $T_{1,3,3}$ or $T_{2,2,3}$ by deleting $k$ edges without creating a copy of $T_{1,1,3}$. So again (2) holds and (1) does not.

    Now assume $v(H) \ge 4$. Consider a partition $\P = \{X, X^c\}$ of $H$.  Since $v(H) \ge 4$, either $|X|, |X^c| \ge 2$, or $|X| \ge 3$ and $|X^c| = 1$. In both cases, by the Gap Lemma,
    \begin{align*}
	2|[X, X^c]| - 20 + 16 & = w_H(\P)  = \sigma(H[X]) + \sigma(H[X^c]) + w(H) \\
	& \ge \min\{4+4, 8+0\} - 2k = 8 - 2k.
    \end{align*} 
    So $|[X, X^c]| \ge 6-k$, i.e., $H$ is $(6-k)$-edge-connected.

    Assume that \ref{-2kweight1} fails to hold. Then there exist $k$ edges $e_1 = u_1u_2$ and $e_2 = v_1v_2$ (when $k=2$) such that $H' = H + e_1$ (if $k=1$) or $H' = H + e_1 + e_2$ (if $k=2$), and $H'$ contains no $5K_2$ and $H'$ is not $\S_5$-contractible.  

    For the trivial partition $\P_0$ of $H'$, by \Cref{lemma:gap-lemma} we have $w_{H'}(\P_0) = w(H) + 2k = -2k + 2k \ge 0$. Consider now a nontrivial partition $\P$ of $H'$. If $k=1$, then $w_{H'}(\P) \ge w_H(\P) \ge w(H) + 4 = 2$. When $k=2$, we consider $2$ cases. In the first case, either $e_1$ or $e_2$ lies in $H'/\P$, and since $\P$ is of type $(2^+, *)$, we have $w_{H'}(\P) \ge w_H(\P) + 2 \ge 4 - 4 + 2 = 2$. In the second case, neither $e_i$ lies in $H'/\P$. Then $\P$ is of type either $(2^+, 2^+, *)$ or $(3^+, *)$, or is of type $(2,1,1,\dots,1)$ in which case the 
    $2$ additional edges $e_1$ and $e_2$ are parallel between $u_1$ and $u_2$ (recalling that $e_1=u_1u_2$) but $\mu_G(u_1,u_2) \le 2$ as $H'$ contains no $5K_2$. In all these situations, we have $w_{H'}(\P) = w_H(\P) \ge \min\{4+4-4, 8-4, 6-4\} = 2$.

     Hence $w(H') \ge 0$, and every nontrivial partition $\P$ of $H'$ satisfies $w_{H'}(\P) \ge 2$. Since $H'$ is not $\S_5$-contractible, for some partition $\P'$ we have $H'/\P' \in \mathcal{N}_5$. Because $H$ is $(6-k)$-edge-connected, so is $H'$, implying $H'/\P' \in \{T_{1,3,3}, T_{2,2,3}, W_1, W_2\}$.  

     Every nontrivial partition $\P$ of $H'$ satisfies $w_{H'}(\P) \ge 2$, so by~\Cref{ob:weight-K2-Tabc}\ref{w-N5}, $\P'$ must be trivial. Thus $H'/\P' = H' \in \{T_{1,3,3}, T_{2,2,3}, W_1, W_2\}$. Since $v(H') \ge 4$, we have $H' \in \{W_1, W_2\}$.  

    Finally, $W_1$ contains $3$ copies of $T_{1,1,3}$ not sharing a common $3K_2$. So if $H' = W_1$, then $H$ contains $T_{1,1,3}$, a contradiction. Hence $H' = W_2$, and $H$ is formed from $H'$ by deleting $k$ edges. Since $H$ contains no $T_{1,1,3}$, it follows that $H$ is formed from $2K_4$ by deleting $k$ edges.
\end{proof}

For a subgraph $H$ of $G$, a path $P = v_0 v_1 \dots v_n$ is called an \emph{$(H,n)$-path} if $n \ge 2$, $v_0, v_n \in V(H)$, and all internal vertices $v_1, \dots, v_{n-1} \in V(G) \setminus V(H)$. For brevity, we may simply refer to $P$ as an $H$-path. Moreover, if $G$ has 2 $H$-paths $P_1 = u_0 u_1 \dots u_m$ and $P_2 = v_0 v_1 \dots v_n$ such that $\{u_1, \dots, u_{m-1}\} \cap \{v_1, \dots, v_{n-1}\} = \emptyset$, then $P_1$ and $P_2$ are called \emph{internally disjoint}.

Based on \Cref{lem:w(H)=-2k}, we can analyze the behavior of $H$-paths in $G$ for subgraphs $H$ with $w(H)=-2k$ that are not in $\mathcal{F}_k$.

\begin{lemma}\label{lem:w=-2-path}
Let $H$ be a subgraph of $G$ with $v(H) < v(G)$ and $w(H) = -2$. If $H \not\in \mathcal{F}_1$, then $H$ is an induced subgraph of $G$. Moreover, for each $(H,n)$-path in $G$, we have $n \ge 4$.
\end{lemma}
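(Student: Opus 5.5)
Since $w(H)=-2$ and $H\notin\mathcal{F}_1$, \Cref{lem:w(H)=-2k} with $k=1$ gives alternative (1). Hence $v(H)\ge 4$, $H$ is $5$-edge-connected, and every graph obtained from $H$ by adding one edge (without creating a $5K_2$) is $\S_5$-contractible. The plan is to exploit this twice: once with an edge of $G$, and once with an edge created by lifting an $H$-path. Each time we obtain an $\S_5$-contractible subgraph, and contracting it must yield an $\S_5$-contractible graph, which contradicts \Cref{lem:property}\ref{lem:property-subgraph} or \ref{lem:property-lifting}.

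\emph{Inducedness.} Suppose some edge $e=xy\in E(G)\setminus E(H)$ has $x,y\in V(H)$. Since $G$ contains no $4K_2$ (an $\S_5$-contractible graph by \Cref{thm:small-S5-contractible}), $H+e$ has no $5K_2$. So $H+e$ is $\S_5$-contractible by \Cref{lem:w(H)=-2k}(1). It is a proper subgraph of $G$ because $v(H)<v(G)$. This contradicts \Cref{lem:property}\ref{lem:property-subgraph}.

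\emph{Path length.} Let $P=v_0v_1\dots v_n$ be an $(H,n)$-path with $n\in\{2,3\}$, and lift $P$ to obtain $G'$. Planarity is not automatic here; I would argue that the new edge $v_0v_n$ can be drawn along $P$ itself, since the internal vertices of $P$ lie outside $H$. Then $G'$ is planar, and $H':=H+v_0v_n$ is a subgraph of $G'$. If $v_0=v_n$, the lift creates a loop; I would handle this separately, since then the path is a cycle through one vertex of $H$ and the lift simply deletes it, and in that case I would instead use $H+v_1$ for the weight count. Assume now $v_0\ne v_n$. Because $H$ is induced and $G$ has no $4K_2$, $H'$ has no $5K_2$, so $H'$ is $\S_5$-contractible by \Cref{lem:w(H)=-2k}(1).

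It remains to show that $G'/H'$ is $\S_5$-contractible. Take a partition $\P$ of $G'/H'$ and its $H'$-restored partition $\P_{H'}$ of $G'$, viewed also as a partition of $G$. Compared with $G$, passing to $G'$ changes $w$ by $-2$ for each edge of $P$ that was external to $\P_{H'}$, and by $+2$ if the new edge is external. Since $v_0,v_n$ lie in the same part, the new edge is internal, so the loss is at most $2n\le 6$.

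On the other side, \Cref{lemma:gap-lemma} gives
\[
w_G(\P_{H'})=\sum_i\sigma_i\ge \sigma(H)=6-(-2)=8,
\]
or more when the part containing $V(H)$ is larger or other parts are nontrivial. So $w\ge 8-6=2\ge0$. More precisely, the loss equals $2$ times the number of edges of $P$ crossing parts of $\P_{H'}$. If an internal vertex $v_i$ lies in the same part as $H$, the relevant part properly contains $V(H)$, and its co-weight grows by the Gap Lemma; I would check that this compensates. This gives $w(G'/H')\ge0$, with $w_{G'/H'}(\P)\ge2$ for all partitions except possibly a few tight ones.

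\emph{Main obstacle.} The hard step is excluding $(G'/H')/\P\in\N_5$ in the tight cases. Edge-connectivity is the first tool: $G$ is $6$-edge-connected by \Cref{lem:6-edge-connected} and essentially $8$-edge-connected by \Cref{lem:essential-8-edge-connected}. Since lifting at most $3$ edges reduces cut sizes by at most $2$, every cut of $G'/H'$ stays of size at least $4$, which rules out $2K_2$ and $3K_2$. The remaining members have weight $0$, while the partitions we consider have weight at least $2$ except in boundary cases. In those cases I would refine using \Cref{lemma:>=4+weight} and the absence of $T_{1,1,3}$ (\Cref{lemma:no-T113}), arguing as in \Cref{lemma:no-T113} that $T_{1,3,3},T_{2,2,3},W_1,W_2$ cannot arise. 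This completes the contradiction with \Cref{lem:property}\ref{lem:property-lifting}.
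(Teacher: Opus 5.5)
Your approach is the paper's. You use \Cref{lem:w(H)=-2k} to make every one-edge extension of $H$ (without $5K_2$) $\mathcal{S}_5$-contractible, which gives inducedness via \Cref{lem:property}\ref{lem:property-subgraph}. For $n\le 3$ you lift $P$, contract $H+v_0v_n$, and contradict \Cref{lem:property}\ref{lem:property-lifting}.

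The problem is only in how you finish: the ``tight cases'' you worry about do not exist. Your refinement plan via \Cref{lemma:>=4+weight} and $T_{1,1,3}$-freeness is vague and also unnecessary, because your own bound $8-6=2$ already holds for \emph{every} partition.
\begin{itemize}
\item For any partition $\mathcal{P}$ of $G'/H'$, the part $V_1'$ of the restored partition containing $V(H)$ has $|V_1'|\ge v(H)\ge 4$.
\item The Gap Lemma therefore gives $\sigma(G[V_1'])\ge 8$ outright, so $w_G(\mathcal{P}_{H'})\ge 8$. The co-weight need not ``grow'' from $\sigma(H)$; it is simply at least $8$ for any part with at least $3$ vertices.
\item At most $n\le 3$ edges of $P$ are external, and $v_0v_n$ is internal. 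Hence $w_{G'/H'}(\mathcal{P})\ge 8-2n\ge 2$, with no exceptions.
\item Every member of $\mathcal{N}_5$ except $3K_2$ has weight $0$, so a partition with $(G'/H')/\mathcal{P}\in\mathcal{N}_5$ would have to give $3K_2$.
\item That is excluded by your cut bound: every cut of $G'/H'$ has size at least $6-2=4$. \Cref{lem:6-edge-connected} alone suffices here; essential $8$-edge-connectivity is not needed.
\end{itemize}
That is the entire argument in the paper.

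Two smaller remarks. First, an $(H,n)$-path is a path, so $v_0\ne v_n$ and the loop case never arises.

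Second, your planarity justification is not correct as stated. To draw $v_0v_n$ along $P$, the curve must get around each internal vertex $v_i$, and it can cross the other edges at $v_i$. Lifting a $2$-path in a plane graph can in fact destroy planarity. The paper does not discuss this point either. What the induction in the Main Theorem actually uses is planarity of the contracted subgraph and of the quotient, and here both are immediate:
\begin{itemize}
\item $H+v_0v_n$ is a minor of $H\cup P\subseteq G$.
\item $G'/H'=(G-E(P))/H$ is a contraction of a subgraph of $G$, since $H$ is induced and connected.
\end{itemize}
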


\begin{proof}
Since $w(H) = -2$ and $H \not\in \mathcal{F}_1$, \Cref{lem:w(H)=-2k} implies that adding any edge $e$ to $H$ produces a graph that is $\S_5$-contractible. This ensures that $H$ is induced; otherwise, we would contradict \Cref{lem:property}\ref{lem:property-subgraph}. Furthermore, by the moreover part of \Cref{lem:w(H)=-2k}, $H$ is $5$-edge-connected and has $v(H) \ge 4$.

Let $P$ be an $(H,n)$-path in $G$, denoted by $v_0 v_1 \dots v_n$. Since $H$ is induced, $n \ge 2$. Suppose, for contradiction, that $n \le 3$. Lift the path $P$ to create a new edge $e = v_0 v_n$ and let $H' = H + e$. Then $H'$ is $\S_5$-contractible. Contracting $H'$ to a single vertex $v_H$ yields a graph $G'$. For each partition $\P$ of $G'$, let $\P_H$ denote the corresponding $H$-restored partition of $G$. Since $v(H) \ge 4$, $\P_H$ is a $(4^+,*)$-partition of $G$, giving $w_G(\P_H) \ge 8$ by \Cref{lemma:gap-lemma}. Hence,
\(
w_{G'}(\P) \ge w_G(\P_H) - 2n \ge 8 - 2n \ge 8 - 6 = 2,
\)
which implies $w(G') \ge 2$. Since $G$ is $6$-edge-connected (\Cref{lem:6-edge-connected}), $G'$ is $4$-edge-connected. By \Cref{lem:property}\ref{lem:property-lifting}, $G'$ cannot be $\S_5$-contractible. Therefore, there exists a partition $\P'$ such that $G'/\P' \in \N_5$. However, as $w(G') \ge 2$, \Cref{ob:weight-K2-Tabc}\ref{w-N5} implies that $G'/\P' = 3K_2$. But this is impossible since $G'$ is $4$-edge-connected. This contradiction shows that $n \ge 4$.
\end{proof}

\begin{lemma}\label{lem:w=-4-path}
Let $H$ be a subgraph of $G$ with $v(H)<v(G)$, $w(H)=-4$, and $H \not\in \mathcal{F}_2$. 
If $G$ has an $(H,2)$-path $P$, then $H$ is an induced subgraph of $G$. Moreover, if $G$ has another $(H,n)$-path that is internally disjoint from $P$, then $n \geq 4$.
\end{lemma}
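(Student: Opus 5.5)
The plan mirrors \Cref{lem:w=-2-path}, using the $(H,2)$-path $P$ to supply one of the two edges needed by \Cref{lem:w(H)=-2k}. Since $w(H)=-4$ and $H\notin\mathcal{F}_2$, \Cref{lem:w(H)=-2k} (with $k=2$) gives three facts. First, $v(H)\ge 4$. Second, $H$ is $4$-edge-connected. Third, adding any two edges to $H$, without creating an edge of multiplicity $5$, produces an $\S_5$-contractible graph. Write $P=v_0v_1v_2$ with $v_0,v_2\in V(H)$ and $v_1\notin V(H)$.

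\textbf{$H$ is induced.} Suppose some edge $e\in E(G)\setminus E(H)$ has both ends in $V(H)$. Lift $P$ to obtain a new edge $e'=v_0v_2$, and let $G_1$ be the resulting planar graph. This is planar because $v_0$ and $v_2$ lie on a common face after the two path edges are deleted; equivalently, the new edge can be drawn along $P$.

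Set $H':=H+e+e'\subseteq G_1$. Multiplicity $5$ cannot arise: $\mu_G\le 3$, since $4K_2$ is $\S_5$-contractible, and adding at most two edges to one pair would create $5K_2$ only if $\mu_H=3$ on a pair that also already had the edge $e$ in $G$. In that case $G$ would contain $4K_2$, which is impossible. So the third fact above applies and $H'$ is $\S_5$-contractible.

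Contract $H'$ in $G_1$ to get $G'$. For any partition $\P$ of $G'$, its $H$-restored partition $\P_H$ has type $(4^+,*)$, so the Gap Lemma gives $w_G(\P_H)\ge 8$. The lifting deletes two edges, each costing $2$ in weight, so $w_{G'}(\P)\ge 8-4=4$. Hence $w(G')\ge 4$, and by \Cref{ob:weight-K2-Tabc} no contraction of $G'$ lies in $\N_5$, since every member has weight at most $2$. Therefore $G'$ is $\S_5$-contractible, contradicting \Cref{lem:property}\ref{lem:property-lifting}. This contradiction shows $H$ is induced.

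\textbf{Internally disjoint paths have length at least $4$.} Let $Q=u_0\dots u_n$ be an $(H,n)$-path internally disjoint from $P$. Since $H$ is induced, $n\ge 2$; suppose $n\le 3$. Lift both $P$ and $Q$, giving edges $e'=v_0v_2$ and $e''=u_0u_n$, and put $H':=H+e'+e''$.

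Multiplicity again stays below $5$. If both new edges join the same pair, that pair had multiplicity at most $2$ in $H$: otherwise $\mu_H=3$ together with two parallel edges would give an $\S_5$-contractible subgraph via $T$-type configurations, and in any case \Cref{lem:w(H)=-2k} only requires the absence of $5K_2$. I expect this bookkeeping to need care.

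Contracting $H'$ in the lifted graph gives $G'$. The lifting removes $2+n\le 5$ edges, so for every partition $\P$ we get $w_{G'}(\P)\ge 8-2(2+n)\ge 8-10=-2$. This bound is too weak, and it is the main obstacle.

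\textbf{The main obstacle.} My first attempt is a sharper estimate of $w_G(\P_H)$ when the internal vertices of $P$ and $Q$ do not lie in the part containing $V(H)$. The reason is that the removed edges incident to those internal vertices then contribute less. Precisely, $w_{G'}(\P)=w_{G_1}(\P_H)$, and this only loses $2$ for each removed edge that is $\P_H$-external.

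If $v_1$ lies in the part $V_H\supseteq V(H)$, then both edges of $P$ are internal. In that case either $V_H\supseteq V(H)\cup\{v_1\}$ with $\sigma\ge 8+$, which I would improve using the fact that $H+v_1$ has at least $2$ extra edges, or the loss is small. More concretely: if $V_H=V(H)$, then $\P_H$ separates the singletons $v_1$ and $u_1,\dots,u_{n-1}$ from $V(H)$. Refining further by the Refinement Lemma, the trivial-type gain from $V_H$ with $v(H)\ge4$ is $\sigma(H)=6-(-4)=10$ rather than $8$. This gives $w_{G'}(\P)\ge 10-10=0$, and a strict gain of at least $2$ whenever $\P$ is nontrivial, since some other part then has size at least $2$ and contributes at least $4$.

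If instead $V_H$ strictly contains $V(H)$, then $\sigma(G[V_H])\ge 8$, and the edges of $P$ or $Q$ that become internal reduce the loss correspondingly. So $w(G')\ge0$ and every nontrivial partition has weight at least $2$. We then finish exactly as in \Cref{lem:w=-2-path}. By $6$-edge-connectivity $G'$ is $4$-edge-connected, so $3K_2$ and $2K_2$ are excluded. A trivial-partition member of $\{T_{1,3,3},T_{2,2,3},W_1,W_2\}$ would have to be $G'$ itself, and this is ruled out using $v(G)\ge5$ and the degree conditions; alternatively it yields a $T_{1,1,3}$, contradicting \Cref{lemma:no-T113}. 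Hence $G'$ is $\S_5$-contractible, contradicting \Cref{lem:property}\ref{lem:property-lifting}, so $n\ge 4$.
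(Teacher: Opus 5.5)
Your proof that $H$ is induced is correct. It is a direct version of what the paper does by passing to $G[V(H)]$ and quoting \Cref{lem:w=-2-path}.

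The gap is in the second part, at the sentence ``So $w(G')\ge0$ and every nontrivial partition has weight at least $2$.'' Take $\P$ of type $(2^+,1,\dots,1)$ whose large part $V_H\supsetneq V(H)$ contains none of the internal vertices of $P$ and $Q$. Then all $2+n$ lifted edges are external, so nothing ``becomes internal.'' The bound $\sigma(G[V_H])\ge 8$ then gives only $8-2(2+n)$, which is $0$ for $n=2$ and $-2$ for $n=3$.

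The missing step is \Cref{lem:w=-2-path} applied to $G[V_H]$. Since $P$ is a $(G[V_H],2)$-path and $v(G[V_H])\ge 5$, you get $w(G[V_H])\le -4$, i.e.\ $\sigma(G[V_H])\ge 10$, and this settles $n=2$. For $n=3$, however, it yields only $w_{G'}(\P)\ge 0$ in this case, and the trivial partition has weight exactly $6-2n=0$. So weight-$0$ partitions, trivial or not, genuinely survive, and counting cannot exclude $G'/\P'\in\{W_1,W_2\}$.

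Your appeal to ``degree conditions'' or ``a $T_{1,1,3}$'' does not handle this. For example, take $G'/\P'=W_2$ with $V_H$ one $7$-vertex, the middle vertex of $P$ the other $7$-vertex, and the internal vertices of $Q$ the two $5$-vertices. This does not force a $T_{1,1,3}$, and all three outside vertices have degree at least $7$ in $G$.

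Closing this is the real work of the $n=3$ case. The absence of $T_{1,1,3}$ forces $V_H$ to be the $9$-vertex of $W_1$ or a $7$-vertex of $W_2$. Suppose two of the three internal vertices send at least $4$ edges into $V_H$. As $\mu(G)\le 3$, they give two internally disjoint $(G[V_H],2)$-paths. The $n=2$ case, applied to $G[V_H]$, then forces $w(G[V_H])\le -6$, contradicting $w_{G'}(\P')=0$.

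One subcase remains: the configuration above, where the middle vertex of $P$ sends $5$ edges into $V_H$. There you must:
\begin{enumerate}
\item lift two edge-disjoint $2$-paths through that vertex with ends in $V_H$;
\item contract $G_1[V_H]$, which is $\S_5$-contractible by \Cref{lem:w(H)=-2k} since $w(G[V_H])=-4$ and $G[V_H]\notin\mathcal{F}_2$;
\item check that the result is a $4$-vertex graph with $13$ edges, $\mu\le 3$ and $\delta\ge 5$, hence $\S_5$-contractible by \Cref{thm:small-S5-contractible}, contradicting \Cref{lem:property}\ref{lem:property-lifting}.
\end{enumerate}
Without an argument of this kind, your proof of $n\ge 4$ is incomplete.

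Two smaller points. First, the multiplicity check is simply that a $2$-path joining the ends of a triple edge is a $T_{1,1,3}$ (\Cref{lemma:no-T113}). Second, $4$-edge-connectivity of $G'$ needs \Cref{lem:essential-8-edge-connected}, not just $6$-edge-connectivity, because a single cut can lose four lifted edges.
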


\begin{proof}
Since $w(H)=-4$ and $H \not\in \mathcal{F}_2$, by~\Cref{lem:w(H)=-2k} adding any $2$ edges to $H$ without creating $5K_2$ forms a graph that is $\S_5$-contractible. By the moreover part of~\Cref{lem:w(H)=-2k}, $H$ is $4$-edge-connected and $v(H)\ge 4$. Let $P$ be an $(H,2)$-path in $G$, denoted $u_0u_1u_2$. 

Assume instead that $H$ is not an induced subgraph of $G$. So $w(G[V(H)]) \ge -2$. If $w(G[V(H)])=-2$, then $H$ arises from $G[V(H)]$ by deleting a single edge. Since $H \not\in \mathcal{F}_2$, it follows that $G[V(H)] \not\in \mathcal{F}_1$. However, $P$ is also a $(G[V(H)],2)$-path, contradicting \Cref{lem:w=-2-path}. If $w(G[V(H)]) \ge 0$, then by \Cref{lemma:gap-lemma}, $v(H) \le 2$, a contradiction. Hence, $H$ must be an induced subgraph of $G$.

Let $Q$ be an $(H,n)$-path internally disjoint from $P$; denote $Q$ by $v_0v_1\dots v_n$. As $H$ is an induced subgraph, $n\ge 2$. Assume, contrary to the lemma, that $n \le 3$. We lift paths $P$ and $Q$ by adding edges $u_0u_2$ and $v_0v_n$, and let $H' := H + u_0u_2+ v_0v_n$. Since $G$ has neither $4K_2$ nor $T_{1,1,3}$, we cannot create $5K_2$. So by \Cref{lem:w(H)=-2k}\ref{-2kweight1}, $H'$ is $\S_5$-contractible.

We contract $H'$ to a single vertex $v_H$ and call the resulting graph $G'$. Note that $G'$ is not $\S_5$-contractible by~\Cref{lem:property}\ref{lem:property-lifting}. We will reach a contradiction by showing that in fact $G'$ is $\S_5$-contractible.
Specifically, we show that $w(G')\ge 0$ and that $w_{G'}(\mathcal{P})\ge 2$ for most partitions $\mathcal{P}$.
Moreover, $G'$ is $4$-edge-connected since $G$ is $6$-edge-connected by~\Cref{lem:6-edge-connected} and essentially $8$-edge-connected by~\Cref{lem:essential-8-edge-connected}. Let $\mathcal{P} = \{V_1,\dots,V_t\}$ be a partition of $G'$ with $v_H \in V_1$, and let $\mathcal{P}_H = \{V_1',\dots,V_t'\}$ be the corresponding $H$-restored partition of $G$ such that $V(H)\subseteq V_1'$. Since $v(H)\ge 4$, $\mathcal{P}_H$ has type $(4^+,*)$. 

If $\mathcal{P}$ is trivial, then since $w(H)=-4$ by hypothesis, we have
\[
w_{G'}(\mathcal{P}) = w_G(\mathcal{P}_H) - 2 \times (2+n) = (6-(-4)) - 4 - 2n = 6 - 2n.
\]
	If $\mathcal{P}$ is nontrivial and some part $V_i$ with $i\ne 1$ satisfies $|V_i| \ge 2$, then $\mathcal{P}_H$ has type $(4^+,2^+,*)$. So by \Cref{lemma:gap-lemma}(2),
\[
w_{G'}(\mathcal{P}) \ge w_G(\mathcal{P}_H) - 2 \times (2+n) \ge 8 + 4 - 4 - 2n = 8 - 2n \ge 2.
\]
Otherwise, $\mathcal{P}$ has type $(2^+,1,\dots,1)$ with $|V_1|\ge 2$, and hence $|V_1'| \ge v(H)+1 \ge 5$.

\textbf{Case 1: $\bm{n=2}$.} For each partition $\mathcal{P}$ of type $(2^+,1,1,\dots,1)$ with $|V_1|\ge 2$, 
we consider the placement of $u_1$ and $v_1$. If at least one of $u_1$ and $v_1$ lies in $V_1'$, then 
\[
w_{G'}(\mathcal{P}) \ge w_G(\mathcal{P}_H) - 2 \times 2 \ge 8-4 = 4.
\] 
If neither $u_1$ nor $v_1$ lies in $V_1'$, then since $|V_1'|\ge 5$ and $P$ is a $(G[V_1'],2)$-path, by~\Cref{lem:w=-2-path} we have $w(G[V_1']) \le -4$. It follows that 
\[
w_{G'}(\mathcal{P}) \ge w_G(\mathcal{P}_H) - 2 \times 4 = (6 - (-4)) - 8 = 2.
\]
In all cases, $w_{G'}(\mathcal{P}) \ge \min\{6-2n, 8-2n, 4, 2\} = 2$; hence $w(G') \ge 2$. Since $G'$ is not $\S_5$-contractible, there must exist a partition $\mathcal{P}'$ such that $G'/\mathcal{P}' \in \mathcal{N}_5$. This would require $G'/\mathcal{P}' = 3K_2$, which is impossible since $G'$ is $4$-edge-connected. Thus, $n=2$ cannot occur.

\textbf{Case 2: $\bm{n=3}$.} For any partition $\mathcal{P}$ of type $(2^+,1,1,\dots,1)$ with $|V_1|\ge 2$, we analyze the weight depending on the placement of $u_1,v_1,v_2$:
\begin{enumerate}[label=(\arabic*)]
    \item If $|\{u_1,v_1,v_2\}\cap V_1'|\ge 2$, then $w_{G'}(\mathcal{P}) \ge w_G(\mathcal{P}_H) -2\times 2\ge 8- 4 =4$;
    \item If $|\{u_1,v_1,v_2\}\cap V_1'| = 1$, then $G$ contains a $(G[V_1'],2)$-path or $(G[V_1'],3)$-path. 
  By~\Cref{lem:w=-2-path}, $w(G[V_1']) \le -4$, yielding $w_{G'}(\mathcal{P}) \ge (6-(-4)) - 2\times 4 = 2$;
    \item If none of $u_1,v_1,v_2$ lies in $V_1'$, then $G$ contains a $(G[V_1'],2)$-path $u_0u_1u_2$, and again by~\Cref{lem:w=-2-path}, $w(G[V_1']) \le -4$, giving $w_{G'}(\mathcal{P}) \ge (6-(-4)) - 2\times 5 = 0$.
\end{enumerate}
	In all cases, $w_{G'}(\mathcal{P}) \ge \min\{6-2n, 8-2n, 4, 2, 0\} = 0$; hence $w(G') \ge 0$. Since $G'$ is not $\S_5$-contractible, there exists a partition $\mathcal{P}'$ with $G'/\mathcal{P}' \in \mathcal{N}_5$. As $G'$ is $4$-edge-connected, $G'/\mathcal{P}' \in \mathcal{N}_5 \setminus \{3K_2,2K_2\}$, implying $w_{G'}(\mathcal{P}')=0$. Since $v(H)\ge 4$ and $w_{G'}(\mathcal{P}')=0$, the above analysis implies that the $H$-restored partition $\mathcal{P}_H' = \{V_1',V_2',\dots,V_t'\}$ is of type $(4^+,1,1,\dots,1)$, with $V(H)\subseteq V_1'$, and none of $u_1,v_1,v_2$ lies in $V_1'$. Thus, $v(G'/\mathcal{P}') \ge 1+|\{u_1,v_1,v_2\}| = 4$, and $G'/\mathcal{P}'\in\{W_1,W_2\}$. Since $G$ does not contain $T_{1,1,3}$ by \Cref{lemma:no-T113}, the $9$-vertex in $W_1$, or one of the $7$-vertices in $W_2$, say $x$, must correspond to $V_1'$, with the $3$ other vertices corresponding to $u_1,v_1,v_2$.

If $G'/\mathcal{P}' = W_1$, or $G'/\mathcal{P}' = W_2$ and $u_1$ corresponds to a $5$-vertex, then at least $2$ of $\{u_1,v_1,v_2\}$, say $x_1$ and $x_2$, each sends at least $4$ edges to $V_1'$ in $G$. Since $G$ contains no $4K_2$, this implies $2$ internally disjoint $(G[V_1'],2)$-paths. By Case 1 above, no $2$ internally disjoint $(G[V_1'],2)$-paths exist when $w(G[V_1'])=-4$ and $G[V_1'] \not\in \mathcal{F}_2$. Hence, $w(G[V_1']) \le -6$. This gives $w_{G'}(\mathcal{P}') \ge (6-(-6)) - 10 = 2$, contradicting $w_{G'}(\mathcal{P}')=0$.

Hence, $G'/\mathcal{P}' = W_2$ and $u_1$ corresponds to the other $7$-vertex. Now the structure of $G'/\mathcal{P}'$ and $G$ must be as shown in \Cref{fig:14-E2-G'-G}. 
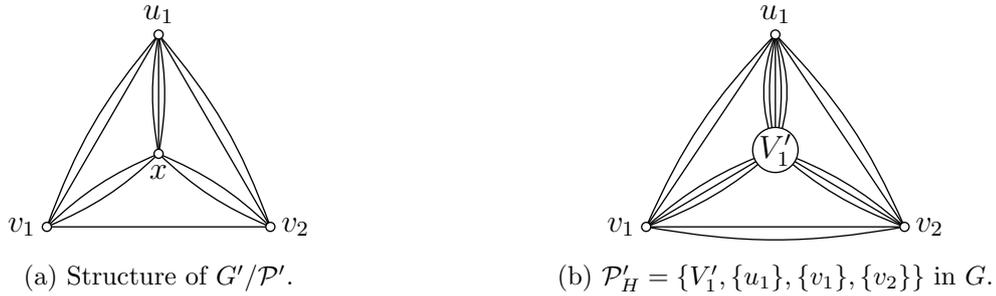
\begin{figure}[ht]
    \centering
    \begin{subfigure}{0.48\textwidth}
    \centering
    \begin{tikzpicture}[scale=0.85]
    \draw [line width=0.5pt, black] (0,3) to (0,1.14); 
    \draw [bend left=10,line width=0.5pt, black] (0,3) to (0,1.14); 
    \draw [bend right=10, line width=0.5pt, black] (0,3) to (0,1.14); 
    \draw [bend right=10, line width=0.5pt, black] (0,3) to (-1.73,0); 
    \draw [bend left=0, line width=0.5pt, black] (0,3) to (-1.73,0); 
    \draw [bend right=0, line width=0.5pt, black] (0,3) to (1.73,0); 
    \draw [bend left=10, line width=0.5pt, black] (0,3) to (1.73,0); 
    \draw [bend right=10, line width=0.5pt, black] (0,1.14) to (1.73,0); 
    \draw [bend left=10, line width=0.5pt, black] (0,1.14) to (1.73,0);  
    \draw [bend right=10, line width=0.5pt, black] (0,1.14) to (-1.73,0); 
    \draw [bend left=10, line width=0.5pt, black] (0,1.14) to (-1.73,0); 
    \draw [line width=0.5pt, black] (-1.73,0) to (1.73,0); 
    \draw [fill=white,line width=0.5pt] (0,3) node[above] {$u_1$} circle (4pt); 
    \draw [fill=white,line width=0.5pt] (-1.73,0) node[left] {$v_1$} circle (4pt); 
    \draw [fill=white,line width=0.5pt] (1.73,0) node[right] {$v_2$} circle (4pt); 
    \draw [fill=white,line width=0.5pt] (0,1.14) node[below] {$x$} circle (4pt);        
    \end{tikzpicture}
    \caption{Structure of $G'/\mathcal{P}'$.}
    \label{fig:14-E2-G}
    \end{subfigure}
    \hspace{0.01\textwidth}
    \begin{subfigure}{0.48\textwidth}
        \centering
        \begin{tikzpicture}[scale=0.85]
        \coordinate (u1) at (0,3);
        \coordinate (v1) at (-2,0);
        \coordinate (v2) at (2,0);
        \coordinate (V1') at (0,1.2);
        \foreach \angle in {-20,-10,0,10,20} {
            \draw [bend left=\angle, line width=0.5pt] (u1) to (V1');
        }
        \draw [bend right=10, line width=0.5pt] (u1) to (v1);
        \draw [bend left=0, line width=0.5pt] (u1) to (v1);
        \draw [bend right=0, line width=0.5pt] (u1) to (v2);
        \draw [bend left=10, line width=0.5pt] (u1) to (v2);
        \foreach \target in {v1, v2} {
            \draw [line width=0.5pt] (V1') -- (\target);
            \draw [bend left=10, line width=0.5pt] (V1') to (\target);
            \draw [bend right=10, line width=0.5pt] (V1') to (\target);
        }
        \draw [line width=0.5pt] (v1) -- (v2);
        \draw [bend right=10, line width=0.5pt] (v1) to (v2);
        \draw [fill=white,line width=0.5pt] (u1) node[above] {$u_1$} circle (4pt);
        \draw [fill=white,line width=0.5pt] (v1) node[left] {$v_1$} circle (4pt);
        \draw [fill=white,line width=0.5pt] (v2) node[right] {$v_2$} circle (4pt);
        \draw [fill=white,line width=0.5pt] (V1') node[below=-3.5mm] {$V_1'$} circle (10pt);
        \end{tikzpicture}
        \caption{$\mathcal{P}_H' = \{V_1', \{u_1\}, \{v_1\}, \{v_2\}\}$ in $G$.}
        \label{fig:14-E2-G'}
    \end{subfigure}
    \caption{The graphs $G'/\mathcal{P}'$ and $G$.}
    \label{fig:14-E2-G'-G}
\end{figure}

Note that $|V_1'|<v(G)$ and $\mathcal{P}_H' = \{V_1',\{u_1\},\{v_1\},\{v_2\}\}$. 
Also,
\[
w_G(\mathcal{P}_H') = 6 - w(G[V_1']) = 2 \times 17 - 10 \times 4 + 16 = 10.
\]
Thus, $w(G[V_1']) = -4$. So either $G[V_1']=H$ or $|V_1'|>v(H)$, and both show that $G[V_1'] \not\in \mathcal{F}_2$. By~\Cref{lem:w(H)=-2k}\ref{-2kweight1}, adding any $2$ edges to $G[V_1']$ without creating $5K_2$ yields an $\S_5$-contractible graph. Since $|[u_1,V_1']| = 5$ and $G$ contains no $4K_2$, $u_1$ has at least two neighbors in $V_1'$, yielding two edge-disjoint $2$-paths $y_1 u_1 y_2$ and $y_1' u_1 y_2'$ with $y_1,y_2,y_1',y_2' \in V_1'$. Lifting these paths to edges $y_1y_2$ and $y_1'y_2'$ produces $G_1$, which contains no $5K_2$; so $G_1[V_1']$ is $\S_5$-contractible by \Cref{lem:w(H)=-2k}. Contracting $G_1[V_1']$ gives $G_2$ with $v(G_2)=4$, $e(G_2)=13$, $\mu(G_2)\le3$, and $\delta(G_2)\ge 5$. By~\Cref{thm:small-S5-contractible}\ref{3_thm:small-S5-contractible}, $G_2$ is $\S_5$-contractible, contradicting \Cref{lem:property}\ref{lem:property-lifting}. Hence, $n\ne3$.
\end{proof}

\begin{lemma}\label{lemma:7-edge-conn}
The graph $G$ is $7$-edge-connected.
\end{lemma}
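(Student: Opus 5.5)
We only need to exclude a vertex of degree exactly $6$. By \Cref{lem:6-edge-connected} every edge-cut has size at least $6$. By \Cref{lem:essential-8-edge-connected} every essential edge-cut has size at least $8$. So a cut of size $6$ must be trivial, that is, of the form $[v,V(G)\setminus\{v\}]$ with $d_G(v)=6$. Suppose such a $v$ exists, and let $H:=G-v$.

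\emph{Step 1: the weight of $H$.} By \Cref{cor:w(G)=0}, $e(G)=5v(G)-8$. Using the Gap Lemma (\Cref{lemma:gap-lemma}(1)),
\[
w(H)=2(e(G)-6)-10(v(G)-1)+16=2e(G)-10v(G)+14=-2 .
\]
Also $v(H)=v(G)-1\ge 4$ and $v(H)<v(G)$.

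\emph{Step 2: $v$ gives a short $H$-path.} $G$ has no $4K_2$, since $4K_2$ is $\S_5$-contractible and \Cref{lem:property}\ref{lem:property-subgraph} forbids it. Hence $\mu_G(v,x)\le 3$ for every $x$, so $v$ has at least two distinct neighbours $x,y\in V(H)$. Then $xvy$ is an $(H,2)$-path.

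\emph{Step 3, main case $H\notin\mathcal F_1$.} If $H\notin\mathcal{F}_1$, then \Cref{lem:w=-2-path} says every $(H,n)$-path satisfies $n\ge 4$. This contradicts the path $xvy$ from Step 2.

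\emph{Step 4, remaining case $H\in\mathcal F_1$.} The members of $\mathcal{F}_1$ have at most $4$ vertices and $v(H)\ge4$, so this forces $v(G)=5$ and $H=2K_4-e$. This is the one case needing care. Let $a,b$ be the endpoints of the simple edge of $2K_4-e$; then $d_H(a)=d_H(b)=5$ and the other two vertices have degree $6$ in $H$. Since $G$ is $6$-edge-connected, $v$ must be adjacent to both $a$ and $b$.

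We lift the $2$-path $avb$. This turns $G[\{a,b\}]$ into a double edge, so $G[V(H)]$ becomes $2K_4$, with $12$ edges, $\delta=6$ and $\mu=2$. The graph $2K_4$ is not $W_1$ or $W_2$ (both contain a triple edge), so it is $\S_5$-contractible by \Cref{thm:small-S5-contractible}\ref{3_thm:small-S5-contractible}. Contracting it leaves $v$ joined to the new vertex by the remaining $6-2=4$ edges, giving $4K_2$. This is $\S_5$-contractible by \Cref{thm:small-S5-contractible}\ref{1_thm:small-S5-contractible}, and it contradicts \Cref{lem:property}\ref{lem:property-lifting}.

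Hence no vertex of degree $6$ exists, and $G$ is $7$-edge-connected. The only non-routine step is this last exceptional case; elsewhere the argument is a direct weight computation followed by \Cref{lem:w=-2-path}.
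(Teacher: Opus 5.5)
Your proof is correct and follows the paper's argument. You compute $w(G-v)=-2$, use the $(G-v,2)$-path through $v$ together with \Cref{lem:w=-2-path} to force $G-v=2K_4-e$, then lift the $2$-path through $v$ joining the two $5$-vertices and contract the resulting $2K_4$ to obtain $4K_2$. The only difference is that you skip the paper's determination of the exact multiplicities at $v$ (via the $K_5$ and $T_{1,1,3}$ arguments); this is genuinely unnecessary, since $6-2=4$ already yields $4K_2$. Planarity of the lifted graph, required by \Cref{lem:property}\ref{lem:property-lifting}, holds because $ab$ is already an edge of $G$, so the lift only adds a parallel edge.
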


\begin{proof}
By \Cref{lem:6-edge-connected} and \Cref{lem:essential-8-edge-connected}, it suffices to show that $G$ has no 
$6$-vertex. Assume instead that $v\in V(G)$ is a $6$-vertex. Let $G_1 := G - v$. Since $w(G) = 0$ by~\Cref{cor:w(G)=0}, \Cref{lem:trivial} gives
\[
w(G_1) = w(G) - 2 \times 6 + 10 \times 1 = -2.
\]
By \Cref{lem:w=-2-path}, as there is a $(G_1,2)$-path via $v$, we have $G_1 \in \mathcal{F}_1$. Since $v(G_1) = v(G) - 1 \ge 4$, $G_1$ must be formed from $2K_4$ by deleting one edge, as shown in \Cref{fig:4-w=-2-graph}. Denote by $v_1$ and $v_2$ the two $6$-vertices in $G_1$, and by $v_3$ and $v_4$ the two $5$-vertices in $G_1$.

Since $\delta(G)\ge6$ by~\Cref{lem:6-edge-connected}, the vertex $v$ must be adjacent to both $v_3$ and $v_4$ in $G$. Moreover, $v$ cannot be adjacent to both of $v_1$ and $v_2$ in $G$, as otherwise $G$ would contain a $K_5$, contradicting the planarity of $G$. By symmetry, we may assume that $v$ is not adjacent to $v_1$. Since $G$ contains no $T_{1,1,3}$ by~\Cref{lemma:no-T113}, we must have
\(
\mu_G(v,v_2)=\mu_G(v,v_3)=\mu_G(v,v_4)=2.
\)
Therefore, $G$ is the graph as shown in \Cref{fig:7-edge-conn}.

\begin{figure}[ht!]
    \centering
    \begin{tikzpicture}[scale=0.8]			
        \draw [bend left=10,line width=0.5pt] (0,3) to (0,1.5); 
        \draw [bend right=10,line width=0.5pt] (0,3) to (0,1.5); 
        \draw [bend left=10,line width=0.5pt] (0,0.6) to (0,1.5); 
        \draw [bend right=10,line width=0.5pt] (0,0.6) to (0,1.5); 
        \draw [bend right=5,line width=0.5pt] (0,3) to (-1.73,0); 
        \draw [bend left=5,line width=0.5pt] (0,3) to (-1.73,0); 
        \draw [bend right=5,line width=0.5pt] (0,3) to (1.73,0); 
        \draw [bend left=5,line width=0.5pt] (0,3) to (1.73,0); 
        \draw [bend right=6,line width=0.5pt] (0,1.5) to (1.73,0); 
        \draw [bend left=6,line width=0.5pt] (0,1.5) to (1.73,0); 
        \draw [bend right=7,line width=0.5pt] (0,0.6) to (1.73,0); 
        \draw [bend left=7,line width=0.5pt] (0,0.6) to (1.73,0); 
        \draw [bend right=6,line width=0.5pt] (0,1.5) to (-1.73,0); 
        \draw [bend left=6,line width=0.5pt] (0,1.5) to (-1.73,0); 
        \draw [bend right=7,line width=0.5pt] (0,0.6) to (-1.73,0); 
        \draw [bend left=7,line width=0.5pt] (0,0.6) to (-1.73,0); 
        \draw [line width=0.5pt] (-1.73,0) to (1.73,0); 

        \draw [fill=white,line width=0.5pt] (0,3) node[above] {$v_1$} circle (3.5pt); 
        \draw [fill=white,line width=0.5pt] (-1.73,0) node[left] {$v_3$} circle (3.5pt); 
        \draw [fill=white,line width=0.5pt] (1.73,0) node[right] {$v_4$} circle (3.5pt); 
        \draw [fill=white,line width=0.5pt] (0,1.5) node[left=0.1mm] {$v_2$} circle (3.5pt); 
        \draw [fill=white,line width=0.5pt] (0,0.6) node[below=1mm] {$v$} circle (3.5pt); 
    \end{tikzpicture}
    \caption{The only possible graph $G$ in \Cref{lemma:7-edge-conn}.}
    \label{fig:7-edge-conn}
\end{figure}
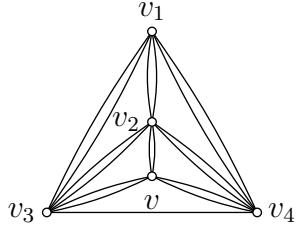

	We now lift the $(G_1,2)$-path $v_3 v v_4$. The resulting subgraph $H$ induced by $\{v_1,v_2,v_3,v_4\}$ is $2K_4$, which is $S_5$-contractible by \Cref{thm:small-S5-contractible}\ref{3_thm:small-S5-contractible}. Contracting $H$ yields $4K_2$, also $S_5$-contractible, contradicting \Cref{lem:property}\ref{lem:property-lifting}. So $\delta(G)\ge 7$ and $G$ is $7$-edge-connected.
\end{proof}

\begin{lemma}\label{lemma:v(G)}
    The graph $G$ satisfies $v(G) \ge 6$.
\end{lemma}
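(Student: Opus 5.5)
The plan is a direct counting argument. By \Cref{lem:property}\ref{lem:property-vertices} we already know $v(G)\ge 5$, so it suffices to rule out $v(G)=5$. We will assume $v(G)=5$ and derive a contradiction between the exact edge count forced by the weight and the minimum degree forced by edge-connectivity.

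First, the moreover part of \Cref{cor:w(G)=0} gives $w(G)=0$ and hence $e(G)=5v(G)-8$. With $v(G)=5$ this yields $e(G)=17$, so the degree sum of $G$ is $\sum_{v\in V(G)} d_G(v)=2e(G)=34$.

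Second, \Cref{lemma:7-edge-conn} says $G$ is $7$-edge-connected, so $\delta(G)\ge 7$. Summing over the five vertices gives $\sum_{v\in V(G)} d_G(v)\ge 5\cdot 7=35>34$. This contradiction shows $v(G)\neq 5$, and therefore $v(G)\ge 6$.

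I expect no real obstacle here. All the heavy lifting was done in \Cref{lemma:9-5_vertex}, \Cref{lem:6-edge-connected}, and \Cref{lemma:7-edge-conn}, which established the degree bound. The only point to verify is that those lemmas were proved for the minimum counterexample $G$ without assuming $v(G)\ge 6$, so that no circularity arises. They were: each relies only on $v(G)\ge 5$.
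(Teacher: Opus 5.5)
Your proposal is correct and is essentially the paper's own proof: rule out $v(G)=5$ by setting $e(G)=5v(G)-8=17$ from \Cref{cor:w(G)=0} against $\delta(G)\ge 7$ from \Cref{lemma:7-edge-conn}, which gives $e(G)\ge 35/2$. Your non-circularity check is also right, since \Cref{lemma:7-edge-conn} and the lemmas it uses rely only on $v(G)\ge 5$.
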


\begin{proof}
Assume instead that $v(G) = 5$. By \Cref{cor:w(G)=0}, we have
\(
e(G) = 5v(G) - 8 = 17.
\)
On the other hand, \Cref{lemma:7-edge-conn} implies $\delta(G) \ge 7$, and thus
\(
e(G) \ge \frac{7 v(G)}{2} = \frac{35}{2} = 17.5,
\)
which is impossible. This contradiction shows that $v(G) \ge 6$.
\end{proof}

\begin{lemma}\label{lemma:essential-10}
    The graph $G$ is essentially $10$-edge-connected.
\end{lemma}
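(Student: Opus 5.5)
The plan is a weight computation on an essential cut, followed by the path lemmas. Let $[X,X^c]$ be an essential edge-cut of $G$ with $|X|\le|X^c|$, and let $\P=\{X,X^c\}$. By the Gap Lemma (\Cref{lemma:gap-lemma}) and $w(G)=0$,
\[
2|[X,X^c]|-4=w_G(\P)=\sigma(G[X])+\sigma(G[X^c]).
\]
It suffices to show that the right-hand side is at least $16$.

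If $|X|\ge 3$, both co-weights are at least $8$ and we are done. So assume $|X|=2$, say $X=\{x,y\}$ with $\mu_G(x,y)=a$. Then $a\le 3$, since $4K_2$ is $\S_5$-contractible. By \Cref{lemma:v(G)}, $|X^c|\ge 4$.

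Next I use $\delta(G)\ge 7$ from \Cref{lemma:7-edge-conn}, which gives $|[X,X^c]|=d(x)+d(y)-2a\ge 14-2a$. For $a\le 2$ this is already at least $10$. So the only case left is $a=3$: then $G[X]=3K_2$ with $\sigma(G[X])=4$, and a cut of size $8$ or $9$ forces $\sigma(G[X^c])\in\{8,10\}$. Equivalently, $H:=G[X^c]$ has $w(H)\in\{-2,-4\}$ and $v(H)=v(G)-2$.

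In this case each of $x$ and $y$ sends at least $7-3=4$ edges into $X^c$. Because $\mu(G)\le 3$, each of them has at least two distinct neighbours in $X^c$. This gives an $(H,2)$-path $P_x$ through $x$ and an $(H,2)$-path $P_y$ through $y$, and these are internally disjoint. I now apply the path lemmas.
\begin{itemize}
\item If $w(H)=-2$ and $H\notin\mathcal F_1$, the path $P_x$ contradicts \Cref{lem:w=-2-path}.
\item If $w(H)=-4$ and $H\notin\mathcal F_2$, the pair $P_x,P_y$ contradicts \Cref{lem:w=-4-path}, which requires any second internally disjoint $H$-path to have length at least $4$.
\end{itemize}
Since $v(H)\ge 4$, membership of $H$ in $\mathcal F_1$ or $\mathcal F_2$ forces $H$ to be $2K_4$ with one or two edges deleted. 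Hence $v(G)=6$ and $e(G)=22$.

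The main obstacle is this residual six-vertex case, which I would handle by hand.
\begin{itemize}
\item Because $G$ has no $T_{1,1,3}$ (\Cref{lemma:no-T113}) and $\mu(x,y)=3$, the vertices $x$ and $y$ have no common neighbour. Their neighbourhoods in $X^c$ are therefore disjoint.
\item Together with $\delta(G)\ge 7$, the degrees at most $6$ of $2K_4$ minus edges, and the edge count $e(G)=22$, this pins down only a few possible graphs.
\item In each such graph I would lift a $2$-path through $x$ (or through $y$) between two vertices of $H$, chosen so that the lifted edge restores $H$ to $2K_4$ or $2K_4-e$ without creating $5K_2$.
\item Then I would contract a suitable $\S_5$-contractible subgraph, using the characterization in \Cref{thm:small-S5-contractible}. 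The quotient should be a small graph, for example a $T_{a,b,c}$ with $a+b+c\ge 8$ and minimum degree at least $4$, that is again $\S_5$-contractible. This contradicts \Cref{lem:property}\ref{lem:property-lifting}.
\end{itemize}
Planarity, namely that $G$ has no $K_5$, should rule out any configuration where this lift is unavailable.
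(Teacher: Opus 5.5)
Up to the residual six-vertex case, your argument matches the paper's proof step for step. That covers the Gap Lemma when $|X|\ge 3$, the bound $\delta(G)\ge 7$ when $\mu(x,y)\le 2$, the two internally disjoint $(H,2)$-paths forcing $H\in\mathcal F_1\cup\mathcal F_2$, hence $H$ being $2K_4$ minus one or two edges, and $x,y$ having no common neighbour. The gap is the residual case itself, which is where the lemma is actually decided. You leave it as a plan phrased with ``should'', and the plan has concrete defects. First, $2K_4-e$ has weight $-2$ (it lies in $\mathcal F_1$), so it is not $\S_5$-contractible. A single lift that turns $H$ into $2K_4-e$, which is the situation when $w(H)=-4$, therefore leaves nothing to contract. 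Second, when $w(H)=-2$ the lift must be chosen carefully: if the missing edge joins the two neighbours of $y$, lifting through $x$ turns $H$ into $W_2$. A correct choice does work there, giving quotient $T_{2,3,4}$, but you have not carried out the case analysis or shown it closes.

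The missing idea is that planarity disposes of the residual case outright, with no lifting.

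Since $\Delta(H)\le 6<7\le\delta(G)$, every vertex of $H$ has a neighbour in $\{x,y\}$. So if $H$ contains a $K_4$, contracting $G[\{x,y\}]$ yields a $K_5$-minor. This covers every case except $H$ being $2K_4$ with both copies of one edge $cd$ removed.

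In that remaining case $H=F^*$, so $w(H)=-4$ and the cut has size $9$. Hence $\{d(x),d(y)\}=\{7,8\}$; say $d(y)=8$. Since $x$ and $y$ have disjoint neighbourhoods of size at least $2$ in the $4$-vertex graph $H$, each has exactly two neighbours there. Now $y$ sends $5$ edges to its two neighbours, with multiplicities $3$ and $2$. By \Cref{lemma:no-T113} these two neighbours are nonadjacent, so they must be $c$ and $d$, and $x$ is joined to the other two vertices $a,b$.

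Then $\{a,b,y\}$ and $\{c,d,x\}$ span a $K_{3,3}$, contradicting planarity. Even more simply, one of $c,d$ has degree $4+2=6<7$. This is how the paper concludes.
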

\begin{proof}
Let $[X_1,X_2]$ be an edge-cut of $G$ with $|X_2|\ge |X_1|\ge 2$, and let $\mathcal{P}=\{X_1,X_2\}$. 
By \Cref{lem:essential-8-edge-connected}, we have $|[X_1,X_2]|\ge 8$.
Since $v(G)\ge 6$ by \Cref{lemma:v(G)} and $|X_2|\ge |X_1|$, it follows that $|X_2|\ge 3$, and hence $\sigma(G[X_2])\ge 8$.

If $|X_1|\ge 3$, then by \Cref{lemma:gap-lemma} we also have $\sigma(G[X_1])\ge 8$. Thus \[
w_G(\mathcal{P})
= \sigma(G[X_1])+\sigma(G[X_2])\ge 16.
\]
Moreover, $w_G(\mathcal{P})= 2|[X_1,X_2]|-10\times 2+16$, and then $|[X_1,X_2]|\ge 10$. Hence, we assume $|X_1|=2$.

If $G[X_1]\in\{K_2,2K_2\}$, then since $\delta(G)\ge 7$ by \Cref{lemma:7-edge-conn}, we obtain
$|[X_1,X_2]|\ge 2\delta(G)-2|E(G[X_1])| \ge 10$.
As $4K_2\not\subseteq G$, the only remaining case is $G[X_1]=3K_2$.

Assume instead that $|[X_1,X_2]|\le 9$. Denote $X_1$ by $\{u,v\}$, and let $G_1:=G-\{u,v\}$.  
Since $d_G(u)\ge 7$ and $d_G(v)\ge 7$, and $G$ is essentially $8$-edge-connected, we have either
$d_G(u)=d_G(v)=7$ or $d_G(u)=7$ and $d_G(v)=8$, where we assume $d_G(u)\le d_G(v)$.

Observe that
$w(G_1)
= w(G)-2\bigl(d_G(u)+d_G(v)-3\bigr)+10\times 2
\in\{-4,-2\}$.
Moreover, $|[u,V(G_1)]|\ge 4$ and $|[v,V(G_1)]|\ge 4$; this implies that there exist $2$ internally disjoint $(G_1,2)$-paths through $u$ and $v$. Since $w(G_1)\ge -4$, Lemmas~\ref{lem:w=-2-path} and~\ref{lem:w=-4-path} yield $G_1\in \mathcal{F}_1\cup\mathcal{F}_2$.

Furthermore, since $v(G_1)=v(G)-2\ge 4$ by \Cref{lemma:v(G)}, the graph $G_1$ must be formed from $2K_4$ by 
deleting $1$ or $2$ edges. In particular, $\Delta(G_1)\le 6$.

If $G_1$ contains a copy of $K_4$, then each $x\in V(G_1)$ satisfies $d_{G_1}(x)\le 6$ but $\delta(G)\ge 7$; 
hence each such $x$ must be adjacent to either $u$ or $v$. Contracting $G[\{u,v\}]$ in $G$ yields a $K_5$, contradicting that $G$ is planar. Therefore, $G_1=F^*$, where $F^*$ is the graph in $\mathcal{F}_2$ consisting of two copies of $T_{2,2,2}$ sharing a common $2K_2$; equivalently, $F^*$ is formed from $2K_4$ by deleting an edge with multiplicity $2$. In particular, $w(G_1)=-4$, which implies $d_G(u)=7$ and $d_G(v)=8$.

Since $G$ contains no $4K_2$, each of $u$ and $v$ has at least $2$ neighbors in $G_1$. Moreover, as $G$ contains no $T_{1,3,3}$ by \Cref{lemma:no-T113}, vertices $u$ and $v$ have no common neighbor in $G_1$. Hence, each of them has exactly $2$ neighbors in $G_1$.

Again by \Cref{lemma:no-T113}, since $|[\{v\},V(G_1)]|=5$, the two neighbors of $v$ in $G_1$ cannot be adjacent; otherwise, $G$ contains a $T_{1,1,3}$. So $u$ must be adjacent to two $6$-vertices of $G_1$, and 
$v$ must be adjacent to two $4$-vertices. Thus, $G$ contains $K_{3,3}$ as a subgraph, contradicting that $G$ is planar.
This contradiction completes the proof.
\end{proof}

\begin{lemma}\label{lemma:T222-forbidden}
Let $H$ be a subgraph of $G$. If $H$ is $T_{2,2,2}$, then there do not exist $2$ internally disjoint $(H,2)$-paths in $G$.
\end{lemma}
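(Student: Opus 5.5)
The plan is to argue by contradiction, mirroring the proofs of \Cref{lem:w=-2-path} and \Cref{lem:w=-4-path}. Here $H=T_{2,2,2}=2K_3\in\mathcal{F}_1$ has $w(H)=12-30+16=-2$, so \Cref{lem:w=-2-path} does not apply directly and we must verify the lifting step by hand. Suppose $P=u_0u_1u_2$ and $Q=v_0v_1v_2$ are internally disjoint $(H,2)$-paths, with $u_1\ne v_1$. Lift both paths to obtain $G_1$, and let $H':=G_1[V(H)]$. Then $H'$ is $H$ plus two edges, so $H'$ is $T_{2,3,3}$ or $T_{2,2,4}$ (or $T_{2,2,2}$ plus an edge inside $H$'s vertex set in either pattern). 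In each case $e(H')=8$ and every pair sum is at least $4$, so $H'$ is $\S_5$-contractible by \Cref{thm:small-S5-contractible}\ref{2_thm:small-S5-contractible}. Contract $H'$ to a vertex $v_H$ and call the result $G'$. By \Cref{lem:property}\ref{lem:property-lifting}, $G'$ is not $\S_5$-contractible, and the goal is to contradict this.

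\textbf{Weight of $G'$.} For a partition $\P$ of $G'$, let $\P_H=\{V_1',\dots,V_t'\}$ be its $H$-restored partition with $V(H)\subseteq V_1'$. Each lifted path whose middle vertex lies outside $V_1'$ lowers the sum of part degrees by $4$, i.e. lowers the weight by $4$; if the middle vertex lies in $V_1'$, the weight drops by at most $2$. So $w_{G'}(\P)\ge w_G(\P_H)-8$, and \Cref{lemma:gap-lemma} gives the following cases.
\begin{enumerate}[label=(\roman*)]
\item If $\P$ is trivial, then $w_G(\P_H)=\sigma(H)=8$, so $w_{G'}(\P)=0$.
\item If some part $V_i'$ with $i\ne1$ has $|V_i'|\ge2$, then $w_G(\P_H)\ge 8+4$, so $w_{G'}(\P)\ge 4$.
\item If $\P_H$ has type $(4^+,1,\dots,1)$ and some middle vertex $u_1$ or $v_1$ lies in $V_1'$, then the loss is at most $6$, so $w_{G'}(\P)\ge 2$.
\item If $\P_H$ has type $(4^+,1,\dots,1)$ and neither $u_1$ nor $v_1$ lies in $V_1'$, then $P$ is a $(G[V_1'],2)$-path. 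If $w(G[V_1'])=-2$ and $G[V_1']\notin\mathcal{F}_1$, this contradicts \Cref{lem:w=-2-path}. Otherwise $w(G[V_1'])\le -4$, so $\sigma\ge10$ and $w_{G'}(\P)\ge 2$.
\end{enumerate}
The one leftover in case (iv) is $G[V_1']=2K_4-e$, which is the member of $\mathcal{F}_1$ on at least $4$ vertices. I would kill it separately. That graph has $\Delta\le 6$, while every vertex of $G$ has degree at least $7$ by \Cref{lemma:7-edge-conn}, and $|[V_1',V_1'^c]|$ must be at least $10$ by \Cref{lemma:essential-10}. Combined with planarity (no $K_5$ after contracting), \Cref{lemma:no-T113}, and the absence of $4K_2$, this should force a contradiction much as in the proof of \Cref{lemma:essential-10}.

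\textbf{Identifying the obstruction.} The cases above give $w(G')\ge0$, with weight at least $2$ on every nontrivial partition. Next, $G'$ is $4$-edge-connected: $G$ is $7$-edge-connected and essentially $10$-edge-connected, and each lift destroys at most $2$ edges of any cut. Hence the obstruction $\P'$ with $G'/\P'\in\N_5$ must be the trivial partition, and $G'\in\{T_{1,3,3},T_{2,2,3},W_1,W_2\}$. By \Cref{lemma:v(G)}, $v(G')=v(G)-2\ge4$, so $G'\in\{W_1,W_2\}$ and $v(G)=6$. In $G'$, every vertex other than $v_H,u_1,v_1$ keeps its $G$-degree, which is at least $7$, and $u_1,v_1$ have degree at least $5$. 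Since $W_1$ has three $5$-vertices, either $G'\ne W_1$ or $v_H$ is a $5$-vertex; similarly in $W_2$ the two $5$-vertices must be $u_1$ and $v_1$.

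The \textbf{main obstacle} is this final reconstruction step. From the few surviving configurations I would rebuild $G$ on $6$ vertices with $e(G)=22$, and then either exhibit a copy of $T_{1,1,3}$ or $K_5$/$K_{3,3}$ (contradicting \Cref{lemma:no-T113} or planarity), or find a different lifting whose contraction is a small $\S_5$-contractible graph as in \Cref{thm:small-S5-contractible}, contradicting \Cref{lem:property}\ref{lem:property-lifting}. I expect the degree count, namely that $d_{G'}(v_H)=d_G(V(H))-4$ must equal $5$, $7$ or $9$ while $d_G(V(H))\ge10$, to eliminate most cases quickly.
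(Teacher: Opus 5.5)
\textbf{Main gap: the six-vertex reconstruction is not done.} Your reduction matches the paper's: lift both paths, contract the $T_{2,3,3}$ or $T_{2,2,4}$, bound $w_{G'}(\mathcal{P})$ through the restored partition, and conclude that $v(G)=6$ and $G'=W_2$. In that configuration $v_H$ and the sixth vertex $s$ are the $7$-vertices and $u_1,v_1$ are the $5$-vertices. But you stop exactly at the decisive step, and of the options you list, only the last one (a new lifting) can work. You do not supply it.

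Write $V(H)=\{x,y,z\}$. If the two paths have the same ends, a $K_5$-minor or a $T_{1,1,3}$ does finish the argument. If instead $u_1\sim x,y$ and $v_1\sim x,z$, then planarity and \Cref{lemma:no-T113} force $N(s)\cap V(H)=\{y,z\}$, $\mu(u_1,x)=\mu(u_1,y)=\mu(v_1,x)=\mu(v_1,z)=2$, and $\{\mu(s,y),\mu(s,z)\}=\{1,2\}$. The resulting $G$ is a multigraph on the octahedron with $22$ edges, minimum degree $7$ and multiplicity $2$. It is planar and contains no $T_{1,1,3}$, so no degree count and no $K_5$, $K_{3,3}$ or $T_{1,1,3}$ argument can exclude it.

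The missing idea is a second, different lifting. Lift $ysz$ and $yu_1x$; then $\{x,y,z\}$ induces $T_{2,3,3}$. Contracting it to a vertex $h$ leaves a $4$-vertex graph with $\mu(h,v_1)=4$, $\mu(h,u_1)=2$, $\mu(h,s)=1$, $\mu(u_1,v_1)=1$ and $\mu(u_1,s)=\mu(v_1,s)=2$. That graph has $12$ edges, $\delta=5$, $\mu=4$, and is neither $W_1$ nor $W_2$, so it is $\S_5$-contractible by \Cref{thm:small-S5-contractible}\ref{3_thm:small-S5-contractible}. (The paper phrases this as contracting the $4K_2$ to obtain $T_{2,3,3}$.) This contradicts \Cref{lem:property}\ref{lem:property-lifting}. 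Without this step your argument is a plan, not a proof.

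\textbf{Smaller gap: the leftover $2K_4-e$ in case (iv).} You leave $G[V_1']=2K_4-e$ and say it should die ``much as in \Cref{lemma:essential-10}''. That argument used that the complement of $V(G_1)$ was a single connected pair $\{u,v\}$. For an arbitrary partition of type $(4^+,1,\dots,1)$ you would first have to show that $G-V_1'$ is connected, and you do not.

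You do not need the general claim. First, $G'$ is actually $5$-edge-connected: a vertex of $G'$ other than $v_H$ loses at most $2$ from degree at least $7$, and an essential cut of $G$ loses at most $4$ from at least $10$. Hence the obstruction $\mathcal{P}'$ satisfies $G'/\mathcal{P}'\in\{W_1,W_2\}$ and has weight $0$. By your own case analysis, $\mathcal{P}'$ is then either trivial or of type $(4,1,1,1)$ with $G[V_1']=2K_4-e$. In the latter case the three vertices outside $V_1'$ are pairwise adjacent, since the underlying graph of $W_1$ and $W_2$ is $K_4$. Each vertex of $V_1'$ has degree at most $6$ in $G[V_1']$ and at least $7$ in $G$, so it has a neighbor among those three vertices. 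Contracting them gives a $K_5$-minor. This is essentially how the paper disposes of $2K_4-e$.
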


\begin{proof}
Let $H$ be a copy of $T_{2,2,2}$ in $G$, and denote $V(H)$ by $\{x,y,z\}$. 
Note that $H$ is an induced subgraph of $G$. Suppose, instead, that $G$ contains two internally disjoint $(H,2)$-paths $P$ and $Q$, containing vertices $u$ and $v$, respectively, as shown in \Cref{fig:T222-forbidden}. By symmetry, there are two possible cases: either $u$ is adjacent to $x,y$ and $v$ is adjacent to $x,z$, or both $u$ and $v$ are adjacent to each of $x$ and $y$.
We lift paths $P$ and $Q$. After lifting, the subgraph $H'$ induced by $V(H)$ is either $T_{2,3,3}$ or $T_{2,2,4}$, both of which are $\S_5$-contractible by \Cref{thm:small-S5-contractible}\ref{2_thm:small-S5-contractible}. Contracting $H'$ to a single vertex $v_H$ yields a graph $G'$.

\begin{figure}[htbp!]
    \centering
    \begin{subfigure}{0.48\textwidth}
    \centering
        \begin{tikzpicture}[scale=0.8]			
    \draw [bend left=15, line width=0.5pt, black] (-1,0) to (0,1.73); 
    \draw [bend left=15, line width=0.5pt, black] (0,1.73) to (-1,0); 

    \draw [bend left=15, line width=0.5pt, black] (1,0) to (0,1.73); 
    \draw [bend left=15, line width=0.5pt, black] (0,1.73) to (1,0); 

    \draw [bend left=15, line width=0.5pt, black] (-1,0) to (1,0); 
    \draw [bend right=15, line width=0.5pt, black] (-1,0) to (1,0); 

    \draw [line width=0.5pt, black] (-2,1.73) to (-1,0); 
    \draw [line width=0.5pt, black] (-2,1.73) to (0,1.73); 

    \draw [line width=0.5pt, black] (2,1.73) to (1,0); 
    \draw [line width=0.5pt, black] (2,1.73) to (0,1.73); 
    
    \draw [fill=white,line width=0.5pt] (-1,0) node[left] {$y$} circle (3.5pt) ; 
    \draw [fill=white,line width=0.5pt] (1,0) node[right] {$z$} circle (3.5pt) ;
    \draw [fill=white,line width=0.5pt] (0,1.73) node[above] {$x$} circle (3.5pt) ;
    \draw [fill=white,line width=0.5pt] (-2,1.73) node[left] {$u$} circle (3.5pt) ;
    \draw [fill=white,line width=0.5pt] (2,1.73) node[right] {$v$} circle (3.5pt) ;
\end{tikzpicture}
    \end{subfigure}
    \hfill
    \begin{subfigure}{0.48\textwidth}
    \centering
        \begin{tikzpicture}[scale=0.8]			
    \draw [bend left=15, line width=0.5pt, black] (-1,0) to (0,1.73); 
    \draw [bend left=15, line width=0.5pt, black] (0,1.73) to (-1,0); 

    \draw [bend left=15, line width=0.5pt, black] (1,0) to (0,1.73); 
    \draw [bend left=15, line width=0.5pt, black] (0,1.73) to (1,0); 

    \draw [bend left=15, line width=0.5pt, black] (-1,0) to (1,0); 
    \draw [bend right=15, line width=0.5pt, black] (-1,0) to (1,0); 

    \draw [line width=0.5pt, black] (-2,1.73) to (-1,0); 
    \draw [line width=0.5pt, black] (-2,1.73) to (0,1.73); 

    \draw [line width=0.5pt, black] (-1, 1.115)to (-1,0); 
    \draw [line width=0.5pt, black] (-1, 1.115)to (0,1.73); 
    
    \draw [fill=white,line width=0.5pt] (-1,0) node[left] {$y$} circle (3.5pt) ; 
    \draw [fill=white,line width=0.5pt] (1,0) node[right] {$z$} circle (3.5pt) ;
    \draw [fill=white,line width=0.5pt] (0,1.73) node[above] {$x$} circle (3.5pt) ;
    \draw [fill=white,line width=0.5pt] (-2,1.73) node[left] {$u$} circle (3.5pt) ;
    \draw [fill=white,line width=0.5pt] (-1, 1.115) node[left] {$v$} circle (3.5pt) ;
\end{tikzpicture}
    \end{subfigure}
    \caption{Forbidden subgraphs when $H\cong T_{2,2,2}$ where $V(H)=\{x,y,z\}$.}
    \label{fig:T222-forbidden}
\end{figure}
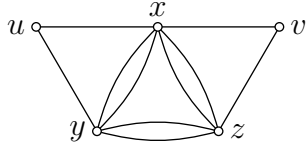

Let $\mathcal{P}=\{V_1,\dots,V_t\}$ be a partition of $G'$, and let $\mathcal{P}_H=\{V_1',\dots,V_t'\}$ be the corresponding $H$-restored partition of $G$, where $v_H\in V_1$ and $V(H)\subseteq V_1'$. Since $v(H)=3$, the partition $\mathcal{P}_H$ has type $(3^+,*)$. We analyze the possible values of $w_{G'}(\mathcal{P})$.

First, if $\mathcal{P}$ is trivial, then $w_G(\mathcal{P}_H)=\sigma(H)=6-(-2)=8$. Since lifting removes $4$ edges from $G/\mathcal{P}_H$, we have $w_{G'}(\mathcal{P})=8-2\times4=0$.

Next, suppose that $\mathcal{P}$ is nontrivial and that $\mathcal{P}_H$ has type $(3^+,2^+,*)$. Then $w_G(\mathcal{P}_H)\ge 8+4=12$, and again at most $4$ edges are removed. Hence $w_{G'}(\mathcal{P})\ge 12-2\times4=4$.

Finally, suppose that $\mathcal{P}_H$ has type $(4^+,1,\dots,1)$. If $|\{u,v\}\cap V_1'|\ge 1$, then at most $2$ edges are removed, so $w_{G'}(\mathcal{P})\ge 8-2\times2=4$. If $|\{u,v\}\cap V_1'|=0$, then $4$ edges are removed and $w_{G'}(\mathcal{P})\ge 8-2\times4=0$. So always $w_{G'}(\mathcal{P})\ge 0$. 
Thus, $w(G')\ge 0$.

Since $G'$ is not $\S_5$-contractible by \Cref{lem:property}\ref{lem:property-lifting}, there exists a partition $\mathcal{P}'$ such that $G'/\mathcal{P}'\in\mathcal{N}_5$. As $G$ is $7$-edge-connected by \Cref{lemma:7-edge-conn} and essentially $10$-edge-connected by \Cref{lemma:essential-10}, the graph $G'$ is $5$-edge-connected. Hence $G'/\mathcal{P}'\in\{W_1,W_2\}$, and therefore $w_{G'}(\mathcal{P}')=0$. By the analysis above, the corresponding partition $\mathcal{P}_H'$ must have type $(3^+,1,1,1)$, and also $u,v\notin V_1'$.

Suppose first that $G'/\mathcal{P}'=W_1$. Since $G$ has no $T_{1,1,3}$ by \Cref{lemma:no-T113}, the unique $9$-vertex of $W_1$ must correspond to $V_1'$, and two of the three $5$-vertices must correspond to $u$ and $v$. The final $5$-vertex must correspond to a $5$-vertex of $G$, contradicting $\delta(G)\ge 7$.

Suppose instead that $G'/\mathcal{P}'=W_2$, where $V(W_2)=\{v_1,v_2,v_3,v_4\}$ with $d(v_1)=d(v_2)=7$ and $d(v_3)=d(v_4)=5$. Since $G$ contains no $T_{1,1,3}$, one of $v_1$ and $v_2$, say $v_1$, corresponds to $V_1'$, while $v_3$ and $v_4$ correspond to $u$ and $v$, respectively. 
Thus $d_G(v_2)=d_G(v_3)=d_G(v_4)=7$. Let $H':=G[V_1']$. Then $w(H')=w(G)-2\times(12+4)+10\times3=-2$.
Since $P$ is also an $(H',2)$-path and $v(H')\ge v(H)=3$, it follows from \Cref{lem:w=-2-path} that $H'$ is 
either $T_{2,2,2}$ (i.e., $H'=H$) or the graph $2K_4-e$ shown in \Cref{fig:4-w=-2-graph}.

In the latter case, every vertex $v'\in V(H')$ satisfies $d_{H'}(v')\le 6$, and hence must be adjacent to at least one of $v_2,v_3,v_4$. Contracting $G[\{v_2,v_3,v_4\}]$ thus produces a $K_5$-minor, contradicting that $G$ is planar. Therefore, $H'=H$.  So $V(G)= \{x,y,z,v_2,v_3,v_4\}$.

Recall that $V(H)=\{x,y,z\}$. Note that $u$ and $v$, i.e., $v_3$ and $v_4$, each have $4$ edges connected to $H$. Since $4K_2$ is $\S_5$-contractible (which thus cannot appear in $G$), each of $v_3$ and $v_4$ must have at least $2$ neighbors in $H$.

First, consider the case where $v_3$ and $v_4$ are both adjacent to each of $x$ and $y$; this is illustrated on the right-hand side of \Cref{fig:T222-forbidden}. If $v_2$ is adjacent to $z$, then contracting the edge $v_2z$ induces a $K_5$-minor. Otherwise, if $v_2$ is adjacent to both $x$ and $y$, then contracting the edge $xz$ induces a $K_5$-minor. Finally, if $v_2$ is adjacent to exactly one of $x$ and $y$, then $G$ contains a $T_{1,1,3}$. Therefore, all cases are impossible, as $G$ is planar and contains no $T_{1,1,3}$.

Now we consider the case where $v_3$ 
is adjacent to $x$ and $y$, and $v_4$ is adjacent to $x$ and $z$; this is illustrated on the left-hand side of \Cref{fig:T222-forbidden}.
Since $G$ contains no $T_{1,1,3}$ and $|[\{v_2\},\{x,y,z\}]|=3$, the vertex $v_2$ must have at least $2$ neighbors in $\{x,y,z\}$.  If $x\in N(v_2)$, then contracting $G[\{y,z\}]$ gives a $K_5$-minor, contradicting that $G$ is planar. Thus, $x\notin N(v_2)$, and hence $y,z\in N(v_2)$.

Moreover, if one of $\{v_2,v_3,v_4\}$, say $v_i$, is adjacent to all vertices of $\{x,y,z\}$, then contracting $G[\{v_2,v_3,v_4\}\setminus\{v_i\}]$, gives a $K_5$-minor, again contradicting that $G$ is planar.
So each of $\{v_2,v_3,v_4\}$ has exactly $2$ neighbors in $\{x,y,z\}$.
Since $\delta(G)\ge 7$ and $G$ has no $T_{1,1,3}$, we have $\mu(v_3,y)=\mu(v_3,x)=\mu(v_4,z)=\mu(v_4,x)=2$ and $\{\mu(v_2,y),\mu(v_2,z)\}=\{1,2\}$.

Now we can lift the paths $yv_2z$ and $yv_3x$. The resulting graph contains an induced $T_{2,3,3}$ on $\{x,y,z\}$, whose contraction yields a graph containing $4K_2$ as a subgraph.
Contracting this $4K_2$ produces a copy of $T_{2,3,3}$.
Since both $T_{2,3,3}$ and $4K_2$ are $\S_5$-contractible, this contradicts \Cref{lem:property}\ref{lem:property-lifting}. This contradiction finishes the proof.
\end{proof}

Let $Q_{a_1,a_2,a_3,a_4}$ denote the graph with vertex set $\{v_1,v_2,v_3,v_4\}$, where the edge between $v_i$ and $v_{i+1}$ (with indices taken modulo $4$) has multiplicity $a_i$ for all $i\in\{1\ldots,4\}$, and all other vertex pairs $\{v_i,v_j\}$ have multiplicity zero. See \Cref{fig:4-graphs-Qabcd} for examples.

\begin{figure}[ht]
    \centering
    \begin{subfigure}{0.2\textwidth}
    \centering
        \begin{tikzpicture}[scale=1]			
    \draw [bend left=15, line width=0.5pt, black] (-1,1) to (1,1); 
    \draw [bend right=15, line width=0.5pt, black] (-1,1) to (1,1);

    \draw [bend left=15, line width=0.5pt, black] (1,-1) to (1,1); 
    \draw [bend right=15, line width=0.5pt, black] (1,-1) to (1,1);
    \draw [line width=0.5pt, black] (1,-1) to (1,1);

    \draw [bend left=15, line width=0.5pt, black] (1,-1) to (-1,-1); 
    \draw [bend right=15, line width=0.5pt, black] (1,-1) to (-1,-1);
    \draw [line width=0.5pt, black] (1,-1) to (-1,-1);

    \draw [bend left=15, line width=0.5pt, black] (-1,1) to (-1,-1); 
    \draw [bend right=15, line width=0.5pt, black] (-1,1) to (-1,-1);
    \draw [line width=0.5pt, black] (-1,1) to (-1,-1);
    
    \draw [fill=white,line width=0.5pt] (-1,1) node[left] {} circle (3.5pt) ; 
    \draw [fill=white,line width=0.5pt] (1,1) node[right] {} circle (3.5pt) ; 
    \draw [fill=white,line width=0.5pt] (-1,-1) node[left] {} circle (3.5pt) ; 
    \draw [fill=white,line width=0.5pt] (1,-1) node[right] {} circle (3.5pt) ;
\end{tikzpicture}
        \caption{$Q_{2,3,3,3}$}
		\label{fig:Q2333}
    \end{subfigure}
    \hfill
    \begin{subfigure}{0.2\textwidth}
    \centering
        \begin{tikzpicture}[scale=1]			
    \draw [bend left=15, line width=0.5pt, black] (-1,1) to (1,1); 
    \draw [bend right=15, line width=0.5pt, black] (-1,1) to (1,1);

    \draw [bend left=15, line width=0.5pt, black] (1,-1) to (1,1); 
    \draw [bend right=15, line width=0.5pt, black] (1,-1) to (1,1);

    \draw [bend left=15, line width=0.5pt, black] (1,-1) to (-1,-1); 
    \draw [bend right=15, line width=0.5pt, black] (1,-1) to (-1,-1);
    \draw [line width=0.5pt, black] (1,-1) to (-1,-1);

    \draw [bend left=15, line width=0.5pt, black] (-1,1) to (-1,-1); 
    \draw [bend right=15, line width=0.5pt, black] (-1,1) to (-1,-1);
    \draw [line width=0.5pt, black] (-1,1) to (-1,-1);
    
    \draw [fill=white,line width=0.5pt] (-1,1) node[left] {} circle (3.5pt) ; 
    \draw [fill=white,line width=0.5pt] (1,1) node[right] {} circle (3.5pt) ; 
    \draw [fill=white,line width=0.5pt] (-1,-1) node[left] {} circle (3.5pt) ; 
    \draw [fill=white,line width=0.5pt] (1,-1) node[right] {} circle (3.5pt) ;
\end{tikzpicture}
        \caption{$Q_{2,2,3,3}$}
		\label{fig:Q2233}
    \end{subfigure}
    \hfill
    \begin{subfigure}{0.2\textwidth}
    \centering
        \begin{tikzpicture}[scale=1]			
    \draw [bend left=15, line width=0.5pt, black] (-1,1) to (1,1); 
    \draw [bend right=15, line width=0.5pt, black] (-1,1) to (1,1);

    \draw [bend left=15, line width=0.5pt, black] (1,-1) to (1,1); 
    \draw [bend right=15, line width=0.5pt, black] (1,-1) to (1,1);
    \draw [line width=0.5pt, black] (1,-1) to (1,1);

    \draw [bend left=15, line width=0.5pt, black] (1,-1) to (-1,-1); 
    \draw [bend right=15, line width=0.5pt, black] (1,-1) to (-1,-1);

    \draw [bend left=15, line width=0.5pt, black] (-1,1) to (-1,-1); 
    \draw [bend right=15, line width=0.5pt, black] (-1,1) to (-1,-1);
    \draw [line width=0.5pt, black] (-1,1) to (-1,-1);
    
    \draw [fill=white,line width=0.5pt] (-1,1) node[left] {} circle (3.5pt) ; 
    \draw [fill=white,line width=0.5pt] (1,1) node[right] {} circle (3.5pt) ; 
    \draw [fill=white,line width=0.5pt] (-1,-1) node[left] {} circle (3.5pt) ; 
    \draw [fill=white,line width=0.5pt] (1,-1) node[right] {} circle (3.5pt) ;
\end{tikzpicture}
        \caption{$Q_{2,3,2,3}$}
		\label{fig:Q2323}
    \end{subfigure}
    \hfill
    \begin{subfigure}{0.2\textwidth}
    \centering
       \begin{tikzpicture}[scale=1]			
    \draw [bend left=15, line width=0.5pt, black] (-1,1) to (1,1); 
    \draw [bend right=15, line width=0.5pt, black] (-1,1) to (1,1);

    \draw [bend left=15, line width=0.5pt, black] (1,-1) to (1,1); 
    \draw [bend right=15, line width=0.5pt, black] (1,-1) to (1,1);

    \draw [bend left=15, line width=0.5pt, black] (1,-1) to (-1,-1); 
    \draw [bend right=15, line width=0.5pt, black] (1,-1) to (-1,-1);

    \draw [bend left=15, line width=0.5pt, black] (-1,1) to (-1,-1); 
    \draw [bend right=15, line width=0.5pt, black] (-1,1) to (-1,-1);
    \draw [line width=0.5pt, black] (-1,1) to (-1,-1);
    
    \draw [fill=white,line width=0.5pt] (-1,1) node[left] {} circle (3.5pt) ; 
    \draw [fill=white,line width=0.5pt] (1,1) node[right] {} circle (3.5pt) ; 
    \draw [fill=white,line width=0.5pt] (-1,-1) node[left] {} circle (3.5pt) ; 
    \draw [fill=white,line width=0.5pt] (1,-1) node[right] {} circle (3.5pt) ;
\end{tikzpicture}
        \caption{$Q_{2,2,2,3}$}
		\label{fig:Q2223}
    \end{subfigure}
    \caption{Four examples for $Q_{a_1,a_2,a_3,a_4}$.}
    \label{fig:4-graphs-Qabcd}
\end{figure}
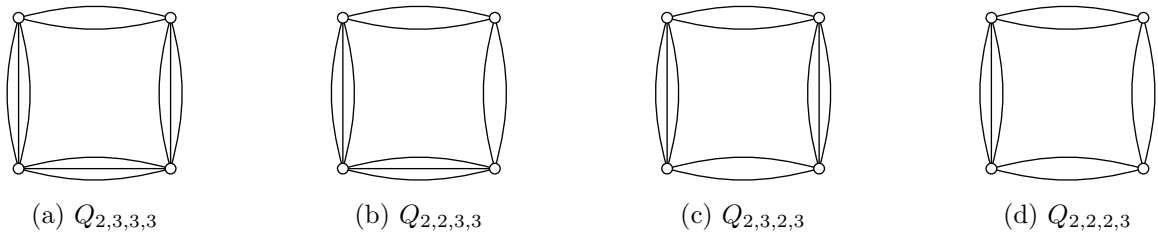

\begin{corollary}\label{cor:Qabcd-forbidden}
Let $H$ be a subgraph of $G$. The following two statements hold.
\begin{enumerate}[label=(\arabic*)]
\setlength{\itemsep}{0em}
\item\label{Q2333}
If $H =Q_{2,3,3,3}$ and there exists an $(H,n)$-path $P_n = v_0 v_1 \dots v_n$ in $G$, then $n \ge 4$.

\item\label{Q2233} 
If $H \in \{Q_{2,2,3,3},Q_{2,3,2,3}\}$, and there exists an $(H,2)$-path $P = u_0 u_1 u_2$ in $G$, then for 
any other internally disjoint $(H,n)$-path $Q = v_0 v_1 \dots v_n$ in $G$, we have $n \ge 4$.
\end{enumerate}
\end{corollary}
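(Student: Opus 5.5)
The plan is to reduce both parts to Lemmas~\ref{lem:w=-2-path} and~\ref{lem:w=-4-path} by computing weights and checking that the relevant $Q$-graphs are not in $\mathcal{F}_1$ or $\mathcal{F}_2$.

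\textbf{Weights.} By \Cref{lemma:gap-lemma}(1), for a four-vertex $H$ we have $w(H)=2e(H)-40+16=2e(H)-24$. Hence:
\begin{itemize}[label={}]
\item $w(Q_{2,3,3,3})=2\cdot 11-24=-2$;
\item $w(Q_{2,2,3,3})=w(Q_{2,3,2,3})=2\cdot 10-24=-4$.
\end{itemize}
Since $G$ has at least six vertices (\Cref{lemma:v(G)}), the hypothesis $v(H)<v(G)$ holds automatically.

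\textbf{Excluding $\mathcal{F}_1$ and $\mathcal{F}_2$.} It remains to see that $Q_{2,3,3,3}\notin\mathcal{F}_1$ and that $Q_{2,2,3,3},Q_{2,3,2,3}\notin\mathcal{F}_2$. Every four-vertex member of $\mathcal{F}_k$ arises from $2K_4$ by deleting $k$ edges, so it has multiplicity at most $2$. Each $Q$ above contains an edge of multiplicity $3$, so none of them lies in $\mathcal{F}_k$. (Even the trivial check of vertex counts and $\mu$ suffices: the two- and three-vertex members of $\mathcal{F}_k$ have the wrong order.)

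\textbf{Applying the lemmas.} For part~(1), \Cref{lem:w=-2-path} applies to $H=Q_{2,3,3,3}$ and gives $n\ge 4$ for every $(H,n)$-path. For part~(2), \Cref{lem:w=-4-path} applies to $H\in\{Q_{2,2,3,3},Q_{2,3,2,3}\}$. The given $(H,2)$-path $P$ plays the role of $P$ in that lemma, and any $(H,n)$-path internally disjoint from $P$ then has $n\ge 4$.

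\textbf{Main obstacle.} The only step needing care is confirming the $\mathcal{F}_k$ membership claims, in particular that $\mathcal{F}_2$ contains no four-vertex graph with a triple edge. This holds because deleting edges never increases multiplicity and $\mu(2K_4)=2$. Everything else is a direct citation.
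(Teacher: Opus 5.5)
Your proposal is correct and takes the same approach as the paper. The paper computes $w(Q_{2,3,3,3})=-2$ and $w(Q_{2,2,3,3})=w(Q_{2,3,2,3})=-4$ and then cites \Cref{lem:w=-2-path} and \Cref{lem:w=-4-path}. You are more explicit than the paper in checking $v(H)<v(G)$ and $H\notin\mathcal{F}_k$ (via $\mu(2K_4)=2$). The paper's extra remark that $H$ is induced is not needed, since those lemmas supply inducedness themselves.
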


\begin{proof}
Since $G$ contains no $T_{1,1,3}$ and $\mu(G)\leq 3$, $H$ is an induced subgraph. Note that $w(Q_{2,3,3,3}) = 2 \times 11 - 10 \times 4 + 16 = -2$, and
$w(Q_{2,2,3,3}) = w(Q_{2,3,2,3}) = 2 \times 10 - 10 \times 4 + 16 = -4$.
So we are done by \Cref{lem:w=-2-path} and \Cref{lem:w=-4-path}.
\end{proof}

\begin{lemma}\label{lemma:Q2223-forbidden}
Let $H$ be a copy in $G$ of $Q_{2,2,2,3}$, with $V(H) = \{v_1, v_2, v_3, v_4\}$. If $\mu_H(v_1,v_2) = \mu_H(v_2,v_3) = \mu_H(v_3,v_4) = 2$, and $\mu_H(v_4,v_1) = 3$, then there do not exist distinct vertices $x,y,z$ in $V(G)\setminus V(H)$ such that $v_1xv_2$, $v_2yv_3$, and $v_3zv_4$ are 
internally disjoint $(H,2)$-paths and $\mu_G(v_2,y) = \mu_G(y,v_3) = 2$, as shown in \Cref{fig:Q2223-forbidden}.
\end{lemma}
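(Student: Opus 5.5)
The plan is to derive a contradiction with \Cref{lem:property}\ref{lem:property-lifting}, in the same style as \Cref{lemma:T222-forbidden} and \Cref{lem:w=-4-path}. Suppose the configuration exists. Lift the three $2$-paths $v_1xv_2$, $v_3zv_4$, and \emph{one} of the two edge-disjoint $2$-paths $v_2yv_3$ (there are two such paths, since $\mu_G(v_2,y)=\mu_G(y,v_3)=2$). Afterwards the subgraph on $V(H)$ becomes $Q_{3,3,3,3}$. This graph has $12$ edges, minimum degree $6$ and multiplicity $3$, and it is neither $W_1$ nor $W_2$ (each of those contains a vertex of degree $5$). So it is $\S_5$-contractible by \Cref{thm:small-S5-contractible}\ref{3_thm:small-S5-contractible}. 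Contract it to a vertex $v_H$ and call the result $G'$; it is planar. The goal is then to show that $G'$ is $\S_5$-contractible.

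Lifting exactly three paths, rather than all four, is forced by the weight count. For the trivial partition we have $v(G')=v(G)-3$ and $e(G')=e(G)-9-6$, so $w_{G'}(\P_0)=w(G)-30+30=0$ by \Cref{cor:w(G)=0}. Lifting both copies of $v_2yv_3$ would give $-4$.

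For a general partition $\P$ of $G'$, let $\P_H$ be its $H$-restored partition, of type $(4^+,*)$. Write $w_{G'}(\P)=w_G(\P_H)-2r$, where $r\le 6$ counts the lifted edges that stay $\P_H$-external. I would then run the usual case analysis with the Gap Lemma (\Cref{lemma:gap-lemma}). If $\P_H$ has a second nontrivial part, then $w_G(\P_H)\ge 12+4$ and $r\le 6$, so $w_{G'}(\P)\ge 4$. If some of $x,y,z$ lie in the big part $V_1'$, then $r$ drops, and \Cref{lem:w=-2-path} or \Cref{lem:w=-4-path} bound $w(G[V_1'])$ using the remaining $(G[V_1'],2)$-paths (for instance $v_2yv_3$ uses its second copy). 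If none of $x,y,z$ lies in $V_1'$, the same two lemmas force $w(G[V_1'])\le -6$, since $G[V_1']\supseteq H$ has at least two internally disjoint $(G[V_1'],2)$-paths; this gives $w_{G'}(\P)\ge 12-12=0$. The conclusion is $w(G')\ge 0$, and the only partitions of weight $0$ are those where $\P_H$ has type $(4^+,1,\dots,1)$ with $x,y,z\notin V_1'$, together with the trivial one.

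The edge-connectivity of $G$ (\Cref{lemma:7-edge-conn} and \Cref{lemma:essential-10}) makes $G'$ at least $5$-edge-connected. Hence any $\P'$ with $G'/\P'\in\N_5$ has $G'/\P'\in\{W_1,W_2\}$ and $w_{G'}(\P')=0$, which puts $\P'$ in the tight case just described.

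The main obstacle is excluding this tight case. Here $v(G'/\P')=4$ while the three vertices $x,y,z$ lie outside $V_1'$, so $V(G')$ is exactly $\{V_1'\}\cup\{x\},\{y\},\{z\}$. In particular $v(G)\le 4+|V_1'\setminus V(H)|+3$, and the degrees of $x,y,z$ in the quotient are their $G$-degrees minus the lifted edges. I would first use $\delta(G)\ge 7$: in $G'$ we get $d(x),d(z)\ge 5$, $d(y)\ge 5$, and $\mu_{G'}(y,v_H)$ is small. I would then match $x,y,z$ to the degree-$5$ and degree-$7$ vertices of $W_1$ or $W_2$. Vertices of degree $5$ in $W_1$ give an immediate contradiction, since only two of $x,y,z$ can drop to degree $5$, as in \Cref{lemma:T222-forbidden}. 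For $W_2$ I would exploit three facts: $G$ contains no $T_{1,1,3}$ (\Cref{lemma:no-T113}); $\{v_2,v_3,y\}$ induces $T_{2,2,2}$, so \Cref{lemma:T222-forbidden} forbids a second internally disjoint $2$-path to it; and $G$ is planar, which excludes $K_5$- and $K_{3,3}$-minors obtained by contracting $V_1'$ together with edges among $x,y,z$. If a residual configuration survives, I would finish as the paper does, by exhibiting an explicit alternative lifting that yields a $T_{2,3,3}$ or $4K_2$ whose contraction is $\S_5$-contractible.
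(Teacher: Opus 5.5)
Your setup is the paper's: lift $v_1xv_2$, one copy of $v_2yv_3$, and $v_3zv_4$, then contract the resulting $Q_{3,3,3,3}$. You also show $w(G')\ge 0$, with equality only when $\P_H$ has type $(4^+,1,\dots,1)$ and $x,y,z\notin V_1'$. Your bound $w(G[V_1'])\le -6$ via \Cref{lem:w=-2-path,lem:w=-4-path} is exactly what the paper uses.

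\textbf{The gap.} It lies in the step you flag as the main obstacle: excluding $G'/\P'\in\{W_1,W_2\}$. You give no valid argument there. Your $W_1$ argument is false. In \Cref{lemma:T222-forbidden} the third vertex $v_2$ lies on no lifted path, but here each of $x,y,z$ loses exactly two edges in the lifting. So all three can have degree $5$ in $G'$ (degree $7$ in $G$). Matching $V_1'$ to the $9$-vertex of $W_1$ and $x,y,z$ to its three $5$-vertices violates no degree constraint. For $W_2$ you only list tools and defer to an unspecified ``alternative lifting'', so the proof does not close.

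\textbf{The missing idea.} The underlying simple graph of both $W_1$ and $W_2$ is $K_4$. In the tight case, $\{x\},\{y\},\{z\}$ are singleton parts of $\P'$, so $x,y,z$ are pairwise adjacent in $G'$. The lifting only added edges inside $V(H)$, so they are pairwise adjacent in $G$ as well. Now $H':=G[\{v_2,y,v_3\}]$ is $T_{2,2,2}$, since $H$ is induced and $\mu_G(v_2,y)=\mu_G(y,v_3)=2$. The paths $v_2xy$ and $v_3zy$ are two internally disjoint $(H',2)$-paths, which contradicts \Cref{lemma:T222-forbidden} immediately. No degree matching, minor argument, or second lifting is needed.

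\textbf{A smaller error.} Your claim that $G'$ is $5$-edge-connected is not justified. The connectivity lemmas only give $d_G(V(H))\ge 10$, and six of those edges lie on the lifted paths, so you only get $d_{G'}(v_H)\ge 4$. This is harmless: the tight case already forces $v(G'/\P')\ge 4$, which rules out $T_{1,3,3}$ and $T_{2,2,3}$. That is how the paper argues, using only $4$-edge-connectivity of $G'$.
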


\begin{figure}[htbp!]
    \centering
    \begin{tikzpicture}[scale=0.8]			
    \draw [bend left=15, line width=0.5pt, black] (-1,1) to (1,1); 
    \draw [bend right=15, line width=0.5pt, black] (-1,1) to (1,1);

    \draw [bend left=15, line width=0.5pt, black] (1,-1) to (1,1); 
    \draw [bend right=15, line width=0.5pt, black] (1,-1) to (1,1);

    \draw [bend left=15, line width=0.5pt, black] (1,-1) to (-1,-1); 
    \draw [bend right=15, line width=0.5pt, black] (1,-1) to (-1,-1);
    \draw [line width=0.5pt, black] (1,-1) to (-1,-1);

    \draw [bend left=15, line width=0.5pt, black] (-1,1) to (-1,-1); 
    \draw [bend right=15, line width=0.5pt, black] (-1,1) to (-1,-1);

    \draw [line width=0.5pt, black] (-1,1) to (-2.732,0);
    \draw [line width=0.5pt, black] (-1,-1) to (-2.732,0);

    \draw [line width=0.5pt, black] (1,-1) to (2.732,0);
    \draw [line width=0.5pt, black] (1,1) to (2.732,0);

    \draw [bend left=15,line width=0.5pt, black] (-1,1) to (0,2.732);
    \draw [bend right=15,line width=0.5pt, black] (-1,1) to (0,2.732);
    \draw [bend left=15,line width=0.5pt, black] (1,1) to (0,2.732);
    \draw [bend right=15,line width=0.5pt, black] (1,1) to (0,2.732);
    
    \draw [fill=white,line width=0.5pt] (-1,1) node[left=1.1mm] {$v_2$} circle (3.5pt) ; 
    \draw [fill=white,line width=0.5pt] (1,1) node[right=1.1mm] {$v_3$} circle (3.5pt) ; 
    \draw [fill=white,line width=0.5pt] (-1,-1) node[left=1.1mm] {$v_1$} circle (3.5pt) ; 
    \draw [fill=white,line width=0.5pt] (1,-1) node[right=1.1mm] {$v_4$} circle (3.5pt) ;
    \draw [fill=white,line width=0.5pt] (-2.732,0) node[left] {$x$} circle (3.5pt) ; 
    \draw [fill=white,line width=0.5pt] (2.732,0) node[right] {$z$} circle (3.5pt) ;
    \draw [fill=white,line width=0.5pt] (0,2.732) node[left] {$y$} circle (3.5pt) ;
    
\end{tikzpicture}

\caption{Forbidden subgraph when $H=Q_{2,2,2,3}$.}
\label{fig:Q2223-forbidden}
\end{figure}
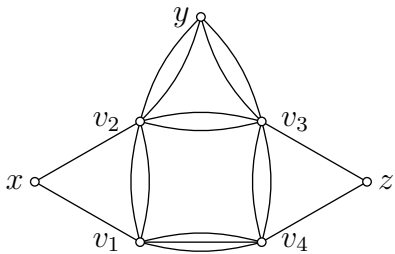

\begin{proof}
Note that $H$ is an induced subgraph, as $G$ does not contain $T_{1,1,3}$ and $\mu(G)\leq 3$. Suppose instead that $3$ such vertices exist. Let $P_1:=v_1xv_2$, $P_2:=v_2yv_3$, and $P_3:=v_3zv_4$.
Lift the paths $P_1$, $P_2$, and $P_3$. The induced subgraph on $V(H)$ then becomes $Q_{3,3,3,3}$, which is $\S_5$-contractible by \Cref{thm:small-S5-contractible}\ref{3_thm:small-S5-contractible}. Contracting this subgraph into a single vertex $v_H$ yields a graph $G'$. Since $G$ is $7$-edge-connected by \Cref{lemma:7-edge-conn} and essentially $10$-edge-connected by \Cref{lemma:essential-10}, it follows that $G'$ is 4-edge-connected.

Let $\mathcal{P}$ be any partition of $G'$, say with parts $V_1, \ldots, V_t$.
Let $\mathcal{P}_H$ be the corresponding $H$-restored partition of $G$, say with parts $V_1', \ldots, V_t'$, 
where $V(H) \subseteq V_1'$. Since $v(H) = 4$, the type of $\mathcal{P}_H$ is $(4^+, *)$. Note that $w(H) = 2 \times 9 - 10 \times 4 + 16 = -6$, and $H$ must be induced; otherwise, $G$ contains $T_{1,1,3}$ or $4K_2$, a contradiction.

For the trivial partition $\P$ of $G'$, we have 
    \[
        w_{G'}(\mathcal{P}) = w_G(\mathcal{P}_H) - 2 \times 6 = (6 - (-6)) - 6 \times 2 = 0.
    \]  
For each partition $\P$ of type $(4^+, 2^+, 2^+, *)$, we have 
    \[
        w_{G'}(\mathcal{P}) \ge w_G(\mathcal{P}_H) - 2 \times 6 \ge (8 + 4 + 4) - 6 \times 2 = 4.
    \]  

Next, consider the case where $\mathcal{P}_H$ has type $(4^+, 2^+, 1,\dots,1)$ or $(5^+, 1,\dots,1)$. 
There are $4$ possibilities based on $V_1'$:

    \noindent
    (a) If $V_1' = V(H)$, then $\sigma(G[V_1']) = 6 - w(H) = 12$, so $\mathcal{P}_H$ has type $(4,2^+,1,1,\dots,1)$. Since at most $6$ edges are removed in constructing $Q_{3,3,3,3}$, we have 
    \[
        w_{G'}(\mathcal{P}) \ge w_G(\mathcal{P}_H) - 2 \times 6 \ge (12 + 4) - 12 = 4.
    \]  
    Otherwise, $|V_1'| \ge 5$ and $w_G(\mathcal{P}_H) \ge \sigma(G[V_1']) \ge 8$.

    \noindent
    (b) If $|\{x, y, z\}\cap V_1'|\ge 2$, then at most $2$ edges are removed, so
    \[
        w_{G'}(\mathcal{P}) \ge w_G(\mathcal{P}_H) - 2 \times 2 \ge 8 - 4 = 4.
    \]

    \noindent
    (c) If $|\{x, y, z\}\cap V_1'| =1$, then at most $4$ edges are removed. Two of $P_1,P_2,P_3$ are $(G[V_1'],2)$-paths, so by \Cref{lem:w=-2-path,lem:w=-4-path} we have $w(G[V_1']) \le -6$. Hence, $w_G(\mathcal{P}_H) \ge \sigma(G[V_1']) \ge 12$, and
    \[
        w_{G'}(\mathcal{P}) \ge w_G(\mathcal{P}_H) - 2 \times 4 \ge 12 - 8 = 4.
    \]

    \noindent
    (d) If $|\{x, y, z\}\cap V_1'|=0$, then similarly $w(G[V_1']) \le -6$ and $\sigma(G[V_1']) \ge 12$. If $\mathcal{P}_H$ has type $(4^+, 2^+, 1,\dots,1)$, then
    \[
        w_{G'}(\mathcal{P}) \ge (12 + 4) - 12 = 4,
    \]
    and if it has type $(5^+, 1,\dots,1)$, then
    \[
        w_{G'}(\mathcal{P}) \ge 12 - 12 = 0.
    \]

    In all cases, we have $w(G') \ge 0$. By \Cref{lem:property}\ref{lem:property-lifting}, $G'$ is not $\S_5$-contractible. Therefore, $G$ has a partition $\mathcal{P}'$ such that $G'/\mathcal{P}' \in \mathcal{N}_5$. Since $G'$ is $4$-edge-connected, we have $G'/\mathcal{P}' \in \mathcal{N}_5 \setminus \{2K_2,3K_2\}$, and $w_{G'}(\mathcal{P}') = 0$. The analysis above implies that $V(H) \subseteq V_1'$, $|V_2'| = |V_3'| = \dots = |V_t'| = 1$, and none of $\{x, y, z\}$ lies in $V_1'$. Thus, $v(G'/\mathcal{P}') \ge 4$, and the only possibility is that $G'/\mathcal{P}' \in \{W_1, W_2\}$, where one vertex corresponds to $G[V_1']$ and the others correspond to $x, y, z$. Since both $W_1$ and $W_2$ contain a copy of $K_4$, vertices $x, y, z$ are pairwise adjacent in $G'/\mathcal{P}'$ and hence also in $G$. However, the induced subgraph $G[\{x, y, z, v_2, v_3\}]$ then contains a copy $H'$ of $T_{2,2,2}$ with $2$ internally disjoint $(H',2)$-paths $v_2 x y$ and $v_3 z y$, contradicting \Cref{lemma:T222-forbidden}. 

    This completes the proof.
\end{proof}

\subsection{Discharging}

In what follows, we slightly abuse notation, by writing $G$ for both a planar graph and one of its fixed planar embeddings. Let $F(G)$ denote the set of faces of $G$. For each face $f \in F(G)$, let $H_f$ denote the subgraph consisting of all vertices and edges on the boundary of $f$, including all parallel copies of each boundary edge. Recall the forbidden configurations from 
\Cref{lemma:no-T113}, 
\Cref{lemma:T222-forbidden}, 
\Cref{cor:Qabcd-forbidden}, 
and \Cref{lemma:Q2223-forbidden}.
These results imply the following corollary, which we will use to analyze our discharging.

\begin{corollary}\label{cor:planar-forbidden}
Let $f$ be a face of $F(G)$.
\begin{enumerate}[label=(\arabic*)]
\setlength{\itemsep}{0em}
    \item\label{T222-two4faces}
    If $H_f=T_{2,2,2}$, then $f$ is weakly adjacent to at least two $4^+$-faces.
    \item\label{Q2333-5face}
    If $H_f=Q_{2,3,3,3}$, then $f$ is weakly adjacent via a $2K_2$ to a $5^+$-face.
    \item\label{Q2323-Q2233}
    If $H_f\in\{Q_{2,3,2,3},Q_{2,2,3,3}\}$ and $f$ is weakly adjacent to a $3$-face $f_1$ (via one $2K_2$) with $H_{f_1}=T_{2,2,2}$, then $f$ is weakly adjacent via the other $2K_2$ to a $5^+$-face.
    \item\label{Q2223-twoT222}
    If $H_f=Q_{2,2,2,3}$, then $f$ is weakly adjacent via copies of $2K_2$ to at most two $3$-faces whose induced subgraphs are each $T_{2,2,2}$.
\end{enumerate}
\end{corollary}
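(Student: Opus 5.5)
Each of the four items is a planar translation of one forbidden configuration, so the plan is the same throughout. Take a face $f$ and view its boundary subgraph $H_f$ inside the embedding. Look at the faces sitting across each multi-edge of $H_f$. Then show that if those faces are too short, they supply $(H_f,n)$-paths with small $n$ that earlier lemmas forbid.

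The basic geometric observation is as follows. Let $xy$ be a boundary edge class of $f$ with multiplicity $\mu$. Crossing the $\mu-1$ digons in that class (the $2$-faces), we reach a face $f'$ that is weakly adjacent to $f$ via $\mu K_2$. We use $\mu(G)\le 3$, since $4K_2$ is $\S_5$-contractible. Because $H_f$ is induced and contains no $T_{1,1,3}$ by \Cref{lemma:no-T113}, $f'$ is not a $2$-face once we have passed all parallel copies. The boundary walk of $f'$ then contains $x$, $y$, and a path $x=u_0u_1\dots u_{d-1}=y$ of length $d(f')-1$ avoiding the edge $xy$. If $d(f')=3$, this gives an $(H_f,2)$-path $xuy$ with $u\notin V(H_f)$; here $u\in V(H_f)$ would create extra edges in the induced $H_f$ or a $T_{1,1,3}$. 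If $d(f')=4$, we obtain an $(H_f,2)$- or $(H_f,3)$-path in the same way. Paths coming from different edge classes of $H_f$ are internally disjoint, since their internal vertices lie on distinct faces around $f$. If two such faces shared an internal vertex $u$, then $u$ would be adjacent to three vertices of $H_f$, and planarity together with \Cref{lemma:no-T113} would rule this out.

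The items then follow one by one.

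For (1), $H_f=T_{2,2,2}$ has three edge classes, each of multiplicity $2$. If at most one of the three weakly adjacent faces were a $4^+$-face, then two of them would be $3$-faces. These give two internally disjoint $(H_f,2)$-paths, contradicting \Cref{lemma:T222-forbidden}.

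For (2), $H_f=Q_{2,3,3,3}$. Suppose the face across the $2K_2$ class has degree at most $4$. It then yields an $(H_f,n)$-path with $n\le 3$, contradicting \Cref{cor:Qabcd-forbidden}\ref{Q2333}.

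For (3), the $3$-face $f_1$ supplies an $(H_f,2)$-path through its third vertex. If the face across the other $2K_2$ class had degree at most $4$, it would supply an internally disjoint $(H_f,n)$-path with $n\le 3$, contradicting \Cref{cor:Qabcd-forbidden}\ref{Q2233}.

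For (4), $H_f=Q_{2,2,2,3}$ has exactly three $2K_2$ classes. Suppose all three are weakly adjacent to $3$-faces $f_1,f_2,f_3$ with each $H_{f_i}=T_{2,2,2}$. Then the third vertices $x,y,z$ of these faces give the configuration of \Cref{lemma:Q2223-forbidden}: the middle class is $v_2v_3$, and $H_{f_2}=T_{2,2,2}$ gives $\mu(v_2,y)=\mu(y,v_3)=2$. That lemma forbids this configuration. We must also check that $x,y,z$ are distinct. This again follows from planarity and the absence of $T_{1,1,3}$, as in the geometric observation above.

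The main obstacle is making the geometric translation rigorous. There are three points:
\begin{itemize}
\item the face beyond the digons really gives a path whose internal vertices lie outside $V(H_f)$;
\item paths from distinct classes are internally disjoint;
\item in (4), the indexing of $Q_{2,2,2,3}$ matches \Cref{lemma:Q2223-forbidden}, with the $3$-class being $v_4v_1$.
\end{itemize}
I would handle all three by a short case check on vertex coincidences. Each coincidence either creates a $T_{1,1,3}$, a $4K_2$, or a vertex adjacent to three vertices of $H_f$ incompatible with the embedding.
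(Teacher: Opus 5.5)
The overall plan matches the paper's: read off $(H_f,n)$-paths from the faces across each multi-edge class of $H_f$, then invoke \Cref{lemma:T222-forbidden}, \Cref{cor:Qabcd-forbidden}, and \Cref{lemma:Q2223-forbidden}. Item (2) goes through exactly as you say. The gap is in how you handle coincidences between the outside vertices of faces across \emph{adjacent} edge classes.

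You claim that a vertex adjacent to three vertices of $H_f$ is ruled out by planarity plus the absence of $T_{1,1,3}$. This is false. In (1), take two $3$-faces $uxy$ and $uxz$ across the $xy$- and $xz$-classes of $H_f=T_{2,2,2}$. This configuration is planar, since its underlying graph is $K_4$. It contains no $T_{1,1,3}$ as long as $\mu(u,\cdot)\le 2$. The two $(H_f,2)$-paths $yux$ and $xuz$ then share $u$, so \Cref{lemma:T222-forbidden} says nothing. The paper handles exactly this case using the minimum degree. Absence of $T_{1,1,3}$ forces $\mu(u,x)\le 2$, and the embedding forces every edge at $x$ to go to $y$, $z$ or $u$. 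Hence $d(x)\le 6$, contradicting $\delta(G)\ge 7$ from \Cref{lemma:7-edge-conn}. The same degree argument, not planarity, is what excludes $y=x$ and $y=z$ in (4): each leaves $v_2$ or $v_3$ with degree $6$. Only $x=z$ is excluded by a $T_{1,1,3}$. You never invoke $\delta(G)\ge 7$, so your ``short case check on vertex coincidences'' cannot close these cases.

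Item (3) needs a further idea. For $H_f=Q_{2,3,2,3}$ your $T_{1,1,3}$ argument does work. For $H_f=Q_{2,2,3,3}$, take $f_1=v_1v_2x$ and $f_2$ across the class $v_2v_3$; here the shared-vertex cases survive both planarity and $T_{1,1,3}$-freeness.
\begin{itemize}
\item If $f_2$ is the $3$-face $v_2v_3x$, you again need $d(v_2)\le 6$.
\item If $f_2$ is a $4$-face, degree considerations force its boundary walk to be $v_2v_3xy$. The $(H_f,2)$-path $v_1xv_2$ and the $(H_f,3)$-path $v_3xyv_2$ then share $x$, so \Cref{cor:Qabcd-forbidden}\ref{Q2233} does not apply.
\end{itemize}
In the second case the paper switches to the triangle $H_{f_1}=T_{2,2,2}$ on $\{v_1,v_2,x\}$. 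It observes that $xyv_2$ and $xv_3v_2$ are two internally disjoint $(H_{f_1},2)$-paths, contradicting \Cref{lemma:T222-forbidden}. Your proposal is missing both this second application of \Cref{lemma:T222-forbidden}, to a neighbouring face rather than to $H_f$, and the degree bound. Without them the coincidence cases in (1), (3) and (4) remain open.
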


\begin{proof}
For \ref{T222-two4faces}, let $V(f):=\{x,y,z\}$. \Cref{lemma:T222-forbidden} implies that there do not exist two internally disjoint $(H_f,2)$-paths. If $f$ is weakly adjacent to two $3$-faces, then these two $3$-faces must share two common vertices. By symmetry, we may assume that the configuration shown in~\Cref{fig:H_f=T_{2,2,2}} must occur, where $uxy$ and $uxz$ are $3$-faces. Note that since $G$ contains no $T_{1,1,3}$, the multiplicities of $ux$, $uy$, and $uz$ are all at most $2$. Therefore, the vertex $x$ has degree less than $7$, and, hence, \ref{T222-two4faces} holds.

\begin{figure}[htbp!]
    \centering
    \begin{subfigure}{0.3\textwidth}
    \centering
    \begin{tikzpicture}[scale=0.8]			
    \draw [bend left=15, line width=0.5pt, black] (-1,0) to (0,1.73); 
    \draw [bend left=15, line width=0.5pt, black] (0,1.73) to (-1,0); 

    \draw [bend left=15, line width=0.5pt, black] (1,0) to (0,1.73); 
    \draw [bend left=15, line width=0.5pt, black] (0,1.73) to (1,0); 

    \draw [bend left=15, line width=0.5pt, black] (-1,0) to (1,0); 
    \draw [bend right=15, line width=0.5pt, black] (-1,0) to (1,0); 

    \draw [line width=0.5pt, black] (-2,1.73) to (-1,0); 
    \draw [line width=0.5pt, black] (-2,1.73) to (0,1.73); 

    \draw [line width=0.5pt, black] (-2,1.73) .. controls (-1,3) and (2.5,3) .. (1,0);  
    
    \draw [fill=white,line width=0.5pt] (-1,0) node[left] {$y$} circle (3.5pt) ; 
    \draw [fill=white,line width=0.5pt] (1,0) node[right] {$z$} circle (3.5pt) ;
    \draw [fill=white,line width=0.5pt] (0,1.73) node[above] {$x$} circle (3.5pt) ;
    \draw [fill=white,line width=0.5pt] (-2,1.73) node[left] {$u$} circle (3.5pt) ;
\end{tikzpicture}
\caption{$H_f=T_{2,2,2}$}
\label{fig:H_f=T_{2,2,2}}
\end{subfigure}
\hfill
\begin{subfigure}
{0.3\textwidth}
    \centering
    \begin{tikzpicture}[scale=0.7]			
    \draw [bend left=15, line width=0.5pt, black] (-1,1) to (1,1); 
    \draw [bend right=15, line width=0.5pt, black] (-1,1) to (1,1);
    \draw [line width=0.5pt, black] (-1,1) to (1,1);

    \draw [bend left=15, line width=0.5pt, black] (1,-1) to (1,1); 
    \draw [bend right=15, line width=0.5pt, black] (1,-1) to (1,1);

    \draw [bend left=15, line width=0.5pt, black] (1,-1) to (-1,-1); 
    \draw [bend right=15, line width=0.5pt, black] (1,-1) to (-1,-1);
    \draw [line width=0.5pt, black] (1,-1) to (-1,-1);

    \draw [bend left=15, line width=0.5pt, black] (-1,1) to (-1,-1); 
    \draw [bend right=15, line width=0.5pt, black] (-1,1) to (-1,-1);

    \draw [bend left=15, line width=0.5pt, black] (-1,1) to (-2.732,0);
    \draw [bend right=15, line width=0.5pt, black] (-1,1) to (-2.732,0);
    \draw [bend left=15, line width=0.5pt, black] (-1,-1) to (-2.732,0);
    \draw [bend right=15, line width=0.5pt, black] (-1,-1) to (-2.732,0);

    \draw [line width=0.5pt, black] (1,-1) to (2.5,1);
    \draw [bend left=90, line width=0.5pt, black] (-2.732,0) to (2.5,1);
    
    \draw [bend left=70, line width=0.5pt, black] (-2.732,0) to (1,1);

    \draw [fill=white,line width=0.5pt] (-1,1) node[left=1.1mm] {$v_2$} circle (3.5pt) ; 
    \draw [fill=white,line width=0.5pt] (1,1) node[right=1.1mm] {$v_3$} circle (3.5pt) ; 
    \draw [fill=white,line width=0.5pt] (-1,-1) node[left=1.1mm] {$v_1$} circle (3.5pt) ; 
    \draw [fill=white,line width=0.5pt] (1,-1) node[right=1.1mm] {$v_4$} circle (3.5pt) ;
    \draw [fill=white,line width=0.5pt] (-2.732,0) node[left] {$x$} circle (3.5pt) ; 
    \draw [fill=white,line width=0.5pt] (2.5,1) node[right] {$y$} circle (3.5pt) ;

    \node at (-1.7,0) {$f_1$};
    \node at (1.5,0.4) {$f_2$};
\end{tikzpicture}

\caption{$H_f=Q_{2,3,2,3}$}
\label{fig:Hf=Q2323}
\end{subfigure}
\hfill
\begin{subfigure}{0.3\textwidth}
    \centering
    \begin{tikzpicture}[scale=0.7]			

    \draw [bend left=15, line width=0.5pt, black] (-1,-1) to (-1,1); 
    \draw [bend right=15, line width=0.5pt, black] (-1,-1) to (-1,1);

    \draw [bend left=15, line width=0.5pt, black] (1,1) to (-1,1); 
    \draw [bend right=15, line width=0.5pt, black] (1,1) to (-1,1);

    \draw [bend left=15, line width=0.5pt, black] (1,1) to (1,-1); 
    \draw [bend right=15, line width=0.5pt, black] (1,1) to (1,-1);
    \draw [line width=0.5pt, black] (1,1) to (1,-1);

    \draw [bend left=15, line width=0.5pt, black] (-1,-1) to (1,-1); 
    \draw [bend right=15, line width=0.5pt, black] (-1,-1) to (1,-1);
    \draw [line width=0.5pt, black] (-1,-1) to (1,-1);

    \draw [bend left=15, line width=0.5pt, black] (-1,1) to (-2.732,0);
    \draw [bend right=15, line width=0.5pt, black] (-1,1) to (-2.732,0);
    \draw [bend left=15, line width=0.5pt, black] (-1,-1) to (-2.732,0);
    \draw [bend right=15, line width=0.5pt, black] (-1,-1) to (-2.732,0);

    \draw [line width=0.5pt, black] (-2.732,0) to (-2.5,1.5);
    \draw [line width=0.5pt, black] (-1,1) to (-2.5,1.5);

    \draw [line width=0.5pt, black]
    (1,1) .. controls (0,3.5) and (-4,3.5) .. (-2.732,0);

    \draw [fill=white,line width=0.5pt] 
    (-1,-1) node[left=1.1mm] {$v_1$} circle (3.5pt); 

    \draw [fill=white,line width=0.5pt] 
    (-1,1) node[left=1.1mm] {$v_2$} circle (3.5pt); 

    \draw [fill=white,line width=0.5pt] 
    (1,1) node[right=1.1mm] {$v_3$} circle (3.5pt); 

    \draw [fill=white,line width=0.5pt] 
    (1,-1) node[right=1.1mm] {$v_4$} circle (3.5pt); 

    \draw [fill=white,line width=0.5pt] 
    (-2.732,0) node[left] {$x$} circle (3.5pt); 

    \draw [fill=white,line width=0.5pt] 
    (-2.5,1.5) node[above] {$y$} circle (3.5pt);

    \node at (-1.7,0) {$f_1$};
    \node at (0,1.6) {$f_2$};

    \end{tikzpicture}

\caption{$H_f=Q_{2,2,3,3}$}
\label{fig:Hf=Q2233}
\end{subfigure}

\caption{Configurations in the proof of~\Cref{cor:planar-forbidden}. }
\label{fig:placeholder}
\end{figure}

For~\ref{Q2333-5face}, it follows directly from \Cref{cor:Qabcd-forbidden}\ref{Q2333}.

For \ref{Q2323-Q2233}, let $V(f):=\{v_1,v_2,v_3,v_4\}$. Let $f_1$ be the $3$-face with $H_{f_1}=T_{2,2,2}$ specified in the statement with $V(f_1):=\{v_1,v_2,x\}$, and let $f_2$ be the $3^+$-face weakly adjacent to $f$ via the other $2K_2$ in $G$. Assume to the contrary that $f_2$ is a $4^-$-face. Then \Cref{cor:Qabcd-forbidden}\ref{Q2233} implies that $f_1$ and $f_2$ must share additional vertices in addition to those of $f$. 

First we consider the case when $H_f=Q_{2,3,2,3}$ with $\mu(v_1,v_2)=\mu(v_3,v_4)=2$. Since $f_2$ is a $4^-$-face, at least one of $v_3$ and $v_4$ must be adjacent to $x$, as shown in \Cref{fig:Hf=Q2323}, which implies that $G$ contains a $T_{1,1,3}$, a contradiction.

Now consider the case when $H_f=Q_{2,2,3,3}$ with $\mu(v_1,v_2)=\mu(v_2,v_3)=2$. If $f_2$ is a $3$-face, then $V(f_2)=\{v_2,v_3,x\}$. Since $G$ contains no $T_{1,1,3}$, it implies that $v_2$ has degree less than 7, contradicting~\Cref{lemma:7-edge-conn}.
Hence, we may assume that $f_2$ is a $4$-face. Let $V(f_2)=\{v_2,v_3,x,y\}$. Since $\delta(G)\ge 7$ and $G$ contains no $T_{1,1,3}$, the boundary walk of $f_2$ must be $v_2v_3xy$ (up to the choice of direction), rather than $v_2v_3yx$, as shown in \Cref{fig:Hf=Q2233}. Note that $xyv_2$ may not be a face. We then observe that the paths $xyv_2$ and $xv_3v_2$ are two internally disjoint $(H_{f_1},2)$-paths, contradicting~\Cref{lemma:T222-forbidden}, and hence it completes the proof.

For \ref{Q2223-twoT222}, let $V(f)=\{v_1,v_2,v_3,v_4\}$, where $\mu(v_1,v_4)=3$. Assume that $f$ is weakly adjacent, via three $2K_2$'s, to three $3$-faces $f_1,f_2$, and $f_3$ with $V(f_1)=v_1xv_2$, $V(f_2)=v_2yv_3$, and $V(f_3)=v_3zv_4$, each of which induces a $T_{2,2,2}$. Then, by \Cref{lemma:Q2223-forbidden}, some of $x,y,z$ must coincide. First, $x=z$ is impossible; otherwise, $v_1xv_4$ induces a $T_{1,1,3}$. Hence, we must have either $y=x$ or $y=z$. The former case implies that $d(v_2)=6$, while the latter one implies that $d(v_3)=6$, both of which contradict the $7$-edge-connectivity of $G$. Therefore, \ref{Q2223-twoT222} holds.
\end{proof}

Now we assign to each face $f \in F(G)$ an initial charge
$ch_0(f) := d(f) - \frac{5}{2}.$
Recall that $w(G)=0$.
By the definition of the weight function, $0 = w(G) = 2e(G) - 10v(G) + 16$, so $0 = e(G)/2-5v(G)/2+4$.
And Euler’s formula gives $f(G) - e(G) + v(G) = 2$, so $f(G)=e(G)-v(G)+2$.  By substituting these
two equalities into the sum below, we get

\begin{align*}
\sum_{f \in F(G)} ch_0(f)
	&= \sum_{f \in F(G)} \left(d(f) - \frac{5}{2}\right) = 2e(G) - \frac{5}{2} f(G)\\
	&= 2e(G) - \frac{5}{2} \left(e(G)-v(G)+2\right)
	=\frac52v(G)-\frac12e(G)-5 = 4-5 < 0.
\end{align*}

We now redistribute charge via the following rules.
Let $ch_1(f)$ denote the charge of $f$ after applying rule (R1), and let $ch_2(f)$ denote the final charge after applying rule (R2).

\begin{enumerate}[label=\textbf{(R\arabic*)}]
\setlength{\itemsep}{0em}
    \item Every $3^+$-face sends $\frac{1}{4}$ to each weakly adjacent $2$-face.
    \item Let $f_1$ and $f_2$ be two $3^+$-faces that are weakly adjacent via $2K_2$.
    \begin{enumerate}
    \setlength{\itemsep}{0em}
        \item[(R2.1)] If $f_1$ is a $5^+$-face, then $f_1$ sends $\frac{1}{4}$ to $f_2$.
        \item[(R2.2)] If $f_1$ is a $4$-face with $H_{f_1}=Q_{a,b,c,d}$ and $\min\{a,b,c,d\}=1$, then $f_1$ sends $\frac{1}{8}$ to $f_2$.
        \item[(R2.3)] If $f_1$ is a $4$-face with $H_{f_1}=Q_{a,b,c,d}$, $\min\{a,b,c,d\}\ge2$, and $H_{f_2}=T_{2,2,2}$, then $f_1$ sends $\frac{1}{8}$ to $f_2$.
    \end{enumerate}
\end{enumerate}

We show that every face $f$ has non-negative final charge, that is, $ch_2(f) \ge 0$.
Thus $\sum_{f \in F(G)} ch_2(f) \ge 0> \sum_{f \in F(G)} ch_0(f)$, and this contradiction completes the proof.

For each $3^+$-face $f$, let $wt(f)$ denote the number of $2$-faces weakly adjacent to $f$.

\medskip
\noindent\textbf{2-faces.}
Let $f$ be a $2$-face.
By (R1), $f$ receives $\frac{1}{4}$ from each of its two weakly adjacent $3^+$-faces and sends no charge.
Thus
$
ch_2(f) = ch_1(f) = ch_0(f) + 2 \times \frac{1}{4}
= -\frac{1}{2} + \frac{1}{2} = 0.
$

\medskip
\noindent\textbf{3-faces.}
Let $f$ be a $3$-face.
By \Cref{lemma:no-T113}, the graph $G$ contains no $T_{1,1,3}$, and hence $wt(f) \le 3$.
After applying (R1),
$
ch_1(f) = d(f) - \frac{5}{2} - \frac{1}{4} wt(f).
$
If $wt(f) \le 2$, then $ch_2(f) \ge ch_1(f) \ge 0$.
If $wt(f) = 3$, then $f = T_{2,2,2}$.
By \Cref{cor:planar-forbidden}\ref{T222-two4faces}, $f$ is weakly adjacent to at least two $4^+$-faces.
By (R2.3),
$
ch_2(f) \ge ch_1(f) + 2 \times \frac{1}{8} \ge 0.
$

\medskip
\noindent\textbf{4-faces.}
Let $f$ be a $4$-face.
Then $H_f= Q_{a,b,c,d}$ with $\min\{a,b,c,d\} \ge 1$.
Since $G$ contains no $4K_2$, we have $\max\{a,b,c,d\} \le 3$.
Moreover, as $G$ contains no $Q_{3,3,3,3}$ (which is $\S_5$-contractible), it follows that $a+b+c+d \le 11$.
By (R1),
\[
ch_1(f)
= d(f) - \frac{5}{2} - \frac{1}{4}(a+b+c+d-4)
= \frac{1}{4}\bigl(10-(a+b+c+d)\bigr).
\]

If $\min\{a,b,c,d\}=1$, then (R1) and possibly (R2.2) 
give
$ch_2(f) \ge ch_0(f) - 3 \times (\frac14+\frac14) = 0$.
So we assume $\min\{a,b,c,d\} \ge 2$, and thus $a+b+c+d \in \{8,9,10,11\}$.

\begin{itemize}
\setlength{\itemsep}{0em}
    \item If $H_f = Q_{2,3,3,3}$, then $ch_1(f) = -\frac{1}{4}$.
    By \Cref{cor:planar-forbidden}\ref{Q2333-5face}, $f$ is weakly adjacent to a $5^+$-face via $2K_2$.
    Hence, by (R2.1), $ch_2(f) \ge ch_1(f) + \frac{1}{4} \ge 0$.
    \item If $H_f \in\{Q_{2,2,3,3},Q_{2,3,2,3}\}$, then $ch_1(f) = 0$.
    If $f$ is weakly adjacent to a $3$-face $f'$ with $H_{f'}=T_{2,2,2}$, then by \Cref{cor:planar-forbidden}\ref{Q2323-Q2233} it is also weakly adjacent to a $5^+$-face via $2K_2$.
    Therefore, by (R2.3) and (R2.1),
    $
    ch_2(f) \ge ch_1(f) - \frac{1}{8} + \frac{1}{4} = \frac{1}{8} > 0.
    $
    Otherwise, $ch_2(f) \ge ch_1(f) = 0$.
    \item If $H_f = Q_{2,2,2,3}$, then $ch_1(f) = \frac{1}{4}$.
    By \Cref{cor:planar-forbidden}\ref{Q2223-twoT222}, $f$ is weakly adjacent via copies of $2K_2$ to at most two copies of $T_{2,2,2}$.
    By (R2.3),
    $
    ch_2(f) \ge ch_1(f) - 2 \times \frac{1}{8} = 0.
    $
    \item If $H_f = Q_{2,2,2,2}$, then $ch_1(f) = \frac{1}{2}$.
    Recall that $f$ is weakly adjacent via copies of $2K_2$ to at most four copies of $T_{2,2,2}$, so (R2.3) implies
    $
    ch_2(f) \ge ch_1(f) - 4 \times \frac{1}{8} = 0.
    $
\end{itemize}

\noindent\textbf{$\bm{5^+}$-faces.}
Let $f$ be a $5^+$-face.
Then
$ch_2(f) \ge ch_0(f) - 2 \times \frac{1}{4} d(f)
= \frac{1}{2}\bigl(d(f) - 5\bigr) \ge 0.$

This completes the proof of Structural Theorem. 

\section*{Acknowledgments}
Thanks to two anonymous referees, whose careful reading and numerous suggestions improved the clarity of this paper.

\end{document}